\theoremstyle{plain}
\newtheorem{thm}{Theorem}[section]
\newtheorem{lem}[thm]{Lemma}
\newtheorem{defin}[thm]{Definition}
\newtheorem{prop}[thm]{Proposition}
\theoremstyle{definition}
\newtheorem{cor}[thm]{Corollary}
\begin{document}

\title[Thick Limit Sets]{Path-connectivity of Thick Laminations, and Markov Processes
  with Thick Limit Sets} \author{Jon Chaika} \author{Sebastian Hensel}
\maketitle

%\section{Notation}
%\begin{itemize}
%\item Points in the curve graph: $\gamma$ (small greek letters)
%\item Geodesics in the curve graph: $g_i$
%\item Model paths in $\partial_\infty\mathcal{C}(S)$: $\mathfrak{p}_i$
%\item Set of model paths: $M$
%\item Concatenations, long paths in $\partial_\infty\mathcal{C}(S)$: $\mathfrak{P}$
%\item Points in the curve graph boundary $\partial_\infty\mathcal{C}(S)$: $x,y$
%\item Paths in graphs: $c$
%\item Mapping class group increments: $\varphi, \psi$
%\item Long mapping class group words: $\Phi, \Psi$
%\item Left-right-comparison mapping class group elements for the model
%  tracks: $\theta$
%\item Set of left-right-comparison mapping classes: $S$
%\item Set of increments: $I$ (?)
%\item Model train tracks: $\tau$
%\item Faces of model train tracks: $\eta$
%\item ``Fail planes'' $P(\eta)$
%\item Connectable points on fail planes: $Q = Q(\eta, \eta')$.
%\end{itemize}

\section{Introduction}
Let $S$ be a closed surface, and $\mathcal{PML}(S)$ the sphere of
projective measured laminations (or foliations). In this article, we
are concerned with paths (and path-connected subsets) consisting of
dynamically and geometrically interesting laminations.

The core property we are interested in is \emph{thickness}, a property
motivated from Teichm\"uller theory. Suppose that $X_0$ is a basepoint
in Teichm\"uller space, which we interpret as the moduli space of
marked hyperbolic surfaces. A lamination $\lambda$ is called
\emph{cobounded}, if the Teichm\"uller ray $\rho$ with initial point
$X_0$ and vertical lamination $\lambda$ projects into a compact
subset of moduli space. A classical theorem of Mumford states that the
set of all surfaces whose systole (i.e. the shortest closed geodesic)
is not shorter than $\epsilon$ is compact in moduli space. Using this
natural exhaustion by compact sets, one can quantify coboundedness: we
say that the ray $\rho$ is \emph{$\epsilon$--thick} if the systole is
not shorter than $\epsilon$ for each point on the ray. We then say
that $\lambda$ is \emph{$\epsilon$--thick relative to $X_0$}. 

Any cobounded lamination is $\delta$--thick for
some $\delta$ relative to our chosen basepoint $X_0$. Note however, that coboundedness is a
mapping-class-group-invariant notion, and does not depend on the
basepoint, while thickness does delicately depend on the basepoint. In
particular, the set of $\epsilon$--thick lamination is closed, while
the set of cobounded lamination is dense in $\mathcal{PML}(S)$. There
can also be paths of cobounded laminations which are not
$\delta$--thick for any $\delta$.

In previous work \cite{Paths} we have shown that the subsets of
$\mathcal{PML}(S)$ of uniquely ergodic laminations and of cobounded
laminations are path-connected. However, by the nature of the
construction employed in \cite{Paths} we do not expect the paths
guaranteed by that previous theorem to be $\delta$--thick for any
$\delta>0$.

As such, the first goal of this article is a more quantitative version
of the connectivity of cobounded laminations: the set of
$\epsilon$--thick laminations is path-connected within the set of
$\delta$--thick laminations. More precisely, we will show: 
\begin{thm}
  Suppose $X_0\in\mathcal{T}(S)$ is  basepoint. For every $\epsilon>0$ there is a $\delta > 0$
  with the following property. If $\lambda, \eta$ are two
  laminations which are $\epsilon$--thick relative to $X_0$, then there
  is a path in $\mathcal{PML}$ connecting $\lambda$ to
  $\eta$, so that any lamination on that path is
  $\delta$--thick relative to $X_0$.
\end{thm}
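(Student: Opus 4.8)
The plan is to reduce the theorem to three ingredients: a combinatorial model of thickness in terms of train track splitting sequences, a local interpolation move that is valid only inside a single train track chart, and a ``staircase'' argument that funnels an arbitrary thick lamination onto one of finitely many periodic (pseudo--Anosov) laminations. For the first ingredient I would fix a finite collection of maximal, recurrent, transversely recurrent train tracks whose polytopes cover $\mathcal{PML}(S)$, and use that the Teichm\"uller ray $\rho$ to $\lambda$ is fellow--travelled by an infinite splitting sequence $\tau_0\to\tau_1\to\cdots$ with $\lambda\in\bigcap_n P(\tau_n)$. Since an $\epsilon$--thick lamination is cobounded it is uniquely ergodic (Masur's criterion), so $\{[\lambda]\}=\bigcap_n\mathbb{P}P(\tau_n)$; and quantitatively, $\epsilon$--thickness is equivalent, with constants depending only on $\epsilon$, to the Rafi-type bound $d_Y(X_0,\lambda)\le R(\epsilon)$ for every (possibly annular) subsurface $Y$. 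This forces the splitting sequence to have bounded combinatorics: up to the mapping class group the $\tau_n$ range over a finite set $\mathcal{F}_\epsilon$, the Teichm\"uller parameter along $\rho$ is comparable to the splitting stage, and $\mathrm{diam}\,\mathbb{P}P(\tau_n)\to 0$ at a rate governed by $\epsilon$. One thus exhibits the $\epsilon$--thick laminations as (part of) the limit set of a Markov process on the finite directed graph with vertex set $\mathcal{F}_\epsilon$ and edges the thick elementary splits; the general mechanism here is the companion result of this paper on Markov processes with thick limit sets.

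\emph{Interpolation inside one chart.} If $\mu,\nu$ are carried by a common train track $\tau$, with weight vectors $w_\mu,w_\nu$, then $t\mapsto[\,tw_\mu+(1-t)w_\nu\,]$ is a continuous path in $\mathbb{P}P(\tau)$ from $[\nu]$ to $[\mu]$ of diameter at most $\mathrm{diam}\,\mathbb{P}P(\tau)$. One must stay inside a single chart here, since $\mathcal{ML}(S)$ is not globally convex --- the naive combination of two transverse laminations is a current of positive self--intersection, not a lamination. For $t\in(0,1)$ the support of the interpolant is carried by the sub-track of $\tau$ spanned by the supports of $\mu$ and $\nu$, so its projection to any $Y$ lies a bounded distance from $\pi_Y(\mu)\cup\pi_Y(\nu)$, whose diameter in $\mathcal{C}(Y)$ is at most $d_Y(\mu,\nu)+O(1)\le d_Y(\mu,X_0)+d_Y(X_0,\nu)+O(1)$. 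Hence if $\mu,\nu$ are $\epsilon$--thick, every lamination on the path has all subsurface and annular coefficients relative to $X_0$ bounded by some $R'(\epsilon)$, and the converse Rafi estimate makes the whole path $\delta(\epsilon)$--thick. In short: \emph{two $\epsilon$--thick laminations carried by a common train track are joined by a $\delta(\epsilon)$--thick path of diameter at most that of the chart.}

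\emph{The staircase and assembly.} The graph above contains loops realising pseudo--Anosov maps whose axes are $\epsilon_0$--thick and pass near $X_0$, for a constant $\epsilon_0$ depending only on $S$. For an $\epsilon$--thick $\lambda$ with splitting sequence $(\tau_n)$ I would form laminations $\mu_n\in\mathbb{P}P(\tau_n)$ coded by ``the first $n$ splits of $\lambda$, followed by a fixed periodic thick tail''. Each $\mu_n$ is $\delta_0(\epsilon)$--thick relative to $X_0$, because the ray to $\mu_n$ shadows $\rho$ up to time $\asymp n$ and then shadows a thick pseudo--Anosov axis, and a concatenation of uniformly thick Teichm\"uller segments is a uniformly thick quasigeodesic by Minsky's contraction property for the thick part. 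Since $\mu_n$ and $\mu_{n+1}$ are both carried by $\tau_n$, the previous step joins them by a $\delta(\delta_0)$--thick path inside $\mathbb{P}P(\tau_n)$, and these pieces shrink onto $\{[\lambda]\}$; concatenating them and adjoining $\lambda$ at the end (which is then automatically $\delta$--thick, the thick locus being closed) yields a $\delta(\epsilon)$--thick path from $\lambda$ to $\mu_0$, where $\mu_0$ ranges over a fixed finite family $\Lambda_\epsilon$ of pseudo--Anosov laminations depending only on $\epsilon$. Choosing the pseudo--Anosov loops so that the members of $\Lambda_\epsilon$ share common carrying tracks, one joins them pairwise by $\delta(\epsilon)$--thick paths as well, and then $\lambda\leadsto\lambda_i\leadsto\lambda_j\leadsto\eta$ is the required path, all of whose laminations are $\delta(\epsilon)$--thick relative to $X_0$.

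\emph{Main obstacle.} The crux is making the staircase work combinatorially: coding all $\epsilon$--thick laminations by a finite-state process with enough connectivity, and, above all, making the shadowing estimates for the $\mu_n$ uniform in $n$ \emph{relative to the fixed basepoint} $X_0$. This is exactly where the difference between thickness and coboundedness bites --- thickness is not mapping-class-group invariant, so one cannot simply conjugate to a standard picture --- and it must be handled through the contraction and fellow-traveling properties of the thick part of Teichm\"uller space together with a careful choice of $\Lambda_\epsilon$. The easier but still necessary points are that interpolation in $\mathcal{PML}(S)$ only makes sense within a single train track chart, and that the cells $\mathbb{P}P(\tau_n)$ shrink at an $\epsilon$-controlled rate, so that the infinite concatenation of short thick paths actually converges to a path.
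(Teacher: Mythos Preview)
The staircase architecture is sound and close in spirit to the paper, but the interpolation step is a genuine gap. Your claim that the straight segment $t\mapsto[tw_\mu+(1-t)w_\nu]$ in $\mathbb{P}P(\tau)$ is $\delta(\epsilon)$--thick whenever $\mu,\nu$ are $\epsilon$--thick is not correct, and the justification offered does not hold: that the interpolant is carried by the subtrack spanned by the supports of $\mu$ and $\nu$ does \emph{not} force $\pi_Y(\lambda_t)$ near $\pi_Y(\mu)\cup\pi_Y(\nu)$. When $\mu,\nu$ lie in the interior of $P(\tau)$ for a maximal $\tau$, that subtrack is all of $\tau$, and laminations carried by $\tau$ realize arbitrarily large subsurface and annular coefficients---being carried by $\tau$ constrains $\pi_Y$ only for those $Y$ in which $\tau$ already fills, not for all $Y$. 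The torus picture makes the failure explicit: $P(\tau)$ is an interval of slopes, $\epsilon$--thick means bounded partial quotients, and the straight segment between two badly approximable slopes is an entire subinterval, containing rationals and Liouville numbers. In higher genus the $\delta$--thick set is likewise closed with empty interior and Thurston measure zero in each $P(\tau)$, so there is no reason for a straight segment between two of its points to remain inside it; and since your staircase relies on precisely this interpolation to join $\mu_n$ to $\mu_{n+1}$, the argument breaks here.

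The paper never interpolates linearly. It fixes in advance a finite supply of \emph{model paths} of uniquely ergodic laminations (constructed in the authors' previous paper) joining chosen points on faces of the model tracks, and builds approximating paths as concatenations of mapping-class images of these model paths. Thickness is certified not by pointwise $\pi_Y$ estimates along the path but by arranging that the mapping classes appearing in the splitting packages trace a bounded-backtracking broken geodesic in the curve graph; Hamenst\"adt's quasiconvexity theorem then yields uniform thickness of the limiting Teichm\"uller rays. The combinatorial point making this possible is that disjoint model paths have uniformly bounded Gromov product at the basepoint, so at each concatenation stage at most one endpoint choice can produce excessive backtracking and a valid continuation always exists. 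What you flag as the main obstacle (uniform shadowing for the $\mu_n$) is handled by exactly this bookkeeping; the step you treat as routine is the one that fails.
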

As an immediate consequence of this theorem and the work of Cordes \cite{Cor}, we
obtain the following.
\begin{cor}
  The Morse boundary of Teichm\"uller space (or the mapping class
  group of $S$) is path-connected.
\end{cor}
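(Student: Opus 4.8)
The plan is to deduce the corollary directly from Theorem 1.1 together with Cordes's description of the Morse boundary. Recall that a geodesic ray in Teichm\"uller space based at $X_0$ is Morse if and only if it is cobounded, and that \cite{Cor} identifies the Morse boundary $\partial_M\mathcal{T}(S)$ with the set of cobounded laminations, topologized as the direct limit $\varinjlim_n \mathcal{M}_{\epsilon_n}$, where $\epsilon_n\to 0$ and $\mathcal{M}_\epsilon\subseteq\mathcal{PML}(S)$ denotes the set of laminations that are $\epsilon$--thick relative to $X_0$, each carrying the subspace topology from $\mathcal{PML}(S)$. (Here one uses that a cobounded lamination is uniquely ergodic, so the associated point of $\mathcal{PML}(S)$ is well defined, and, via Mumford's theorem quoted above, that each $\mathcal{M}_\epsilon$ is compact.) From this description I will use two facts: \emph{(i)} every point of $\partial_M\mathcal{T}(S)$ is represented by a lamination lying in some $\mathcal{M}_\epsilon$; and \emph{(ii)} for every $\delta>0$ the natural inclusion $\mathcal{M}_\delta\hookrightarrow\partial_M\mathcal{T}(S)$ is continuous, since $\mathcal{M}_\delta\subseteq\mathcal{M}_{\epsilon_n}$ for all $n$ with $\epsilon_n\le\delta$ and the canonical maps into a direct limit are continuous.

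Granting this, the argument is as follows. Let $\xi,\zeta\in\partial_M\mathcal{T}(S)$ be arbitrary, represented by cobounded laminations $\lambda,\eta$. By \emph{(i)} each of $\lambda,\eta$ is $\epsilon$--thick relative to $X_0$ for some $\epsilon>0$, and after shrinking $\epsilon$ we may assume the same $\epsilon$ works for both, so $\lambda,\eta\in\mathcal{M}_\epsilon$. Let $\delta=\delta(\epsilon)>0$ be the constant furnished by Theorem 1.1, and let $\gamma\colon[0,1]\to\mathcal{PML}(S)$ be the path it produces, with $\gamma(0)=\lambda$, $\gamma(1)=\eta$ and $\gamma(t)\in\mathcal{M}_\delta$ for every $t$. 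Since $\gamma$ is continuous and has image contained in $\mathcal{M}_\delta$, it is continuous as a map $[0,1]\to\mathcal{M}_\delta$ for the subspace topology; postcomposing with the continuous inclusion $\mathcal{M}_\delta\hookrightarrow\partial_M\mathcal{T}(S)$ of \emph{(ii)} gives a continuous path in the Morse boundary from $\xi$ to $\zeta$. As $\xi,\zeta$ were arbitrary, $\partial_M\mathcal{T}(S)$ is path-connected. The statement for the mapping class group is obtained in the same way, using the corresponding identification of $\partial_M\mathrm{MCG}(S)$ with cobounded laminations provided by \cite{Cor}.

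I expect the only delicate point to be the interface with \cite{Cor}: one must know that the thickness strata $\mathcal{M}_\delta$ sit inside the Morse boundary with exactly the topology they inherit from $\mathcal{PML}(S)$ — equivalently, that a $\mathcal{PML}(S)$--continuous path whose image lies in a single $\mathcal{M}_\delta$ is continuous into $\partial_M\mathcal{T}(S)$ — and that the Morse directions are precisely the cobounded laminations (which rests on Masur's criterion, ruling out non--uniquely--ergodic or non--cobounded directions, together with the contraction estimates behind coboundedness). Both are established in \cite{Cor}, so once Theorem 1.1 is available the corollary follows with no further work.
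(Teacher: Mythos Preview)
Your argument is correct and close in spirit to the paper's, but there is a difference worth noting. The paper does not deduce the corollary from Theorem~1.1 (the $\mathcal{PML}$ statement) directly; it invokes the stronger Theorem~\ref{thm:filling-thick}, which produces a continuous two-parameter family $H\colon[0,1]\times[0,\infty)\to\mathcal{T}(S)$ of $\epsilon'$--thick Teichm\"uller rays interpolating between the two given Morse rays. Since each stratum $\partial_M^{N'}\mathcal{T}_e$ carries the compact-open topology by definition, joint continuity of $H$ immediately gives a continuous path $s\mapsto H(s,\cdot)$ into that stratum, and hence into the direct limit; no comparison with the $\mathcal{PML}$ topology is needed.

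Your route instead requires the fact that a $\mathcal{PML}$--continuous family of $\delta$--thick laminations yields a compact-open--continuous family of based Teichm\"uller rays. This is true (e.g.\ via the continuous dependence of the Teichm\"uller ray on its defining quadratic differential, together with Hubbard--Masur), but it is not literally what \cite{Cor} establishes: Cordes topologizes the strata via rays, not via $\mathcal{PML}$, so your claim that ``the thickness strata $\mathcal{M}_\delta$ sit inside the Morse boundary with exactly the topology they inherit from $\mathcal{PML}(S)$'' needs an independent justification rather than a citation to \cite{Cor}. The paper formulated Theorem~\ref{thm:filling-thick} precisely so as to bypass this step. For the mapping class group both approaches agree: the paper simply cites the homeomorphism $\partial_M\mathcal{T}(S)\cong\partial_M\mathrm{MCG}(S)$ from \cite[Theorem~4.12]{Cor}.
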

In \cite{Paths} we showed that any two points in the Morse boundary
can be connected by a path in $\mathcal{PML}$ where every point is
cobounded, and hence defines a point in the Morse boundary. However,
we don't know that it is a path in the topology for the Morse
boundary. We establish this corollary via the uniform statement
above. The uniform statement also answers \cite[Question 4]{Paths}.

One reason for our interest in cobounded laminations is the connection
to \emph{convex cocompact subgroups} of the mapping class groups, in
particular the famous question if there is a convex cocompact surface
group. If such a group were to exist, its limit set would be a group
invariant circle consisting only of cobounded laminations.

It is known that there are hyperbolic planes, which
quasi-isometrically embed into thick parts of Teichmüller space
\cite{LS-thick}, and whose (Gromov) boundaries therefore yield 
circles of cobounded laminations. However, these are not invariant
under any group. On the other hand, any known examples of convex
cocompact subgroups are virtually free, and therefore have totally
disconnected limit set.

While we cannot construct a group-invariant cobounded circle, we can
find a path-connected subset which is invariant under many mapping
classes. Namely, we show that there is a shift of finite type in the
mapping class where the limit of the orbit map is a path connected set
of cobounded laminations. More precisely:
\begin{thm}\label{thm:markov}
  There is a finite subset of the mapping class group
  $H=\{h_1,...,h_k\}$ and a finite set of ordered pairs
  $G\subset \{1,...,k\}^2$ so that if one considers
  $\hat{H}=\{h_{n_1}...h_{n_j}:(n_{i},n_{i+1})\in G \text{ for all
  }i<j\}$ then for every $\gamma_0$ in the curve graph, the limit set of $\hat{H}\gamma_0$ in the Gromov boundary of the curve graph is path connected and contained in the cobounded laminations. In fact, if we consider the orbit $HX_0$ in Teichm\"uller
  space, then the limit set $\Lambda$ in $\mathcal{PML}$ (seen as the 
  Thurston compactification) has the property that there is an
  $\epsilon>0$ so that $geod(\lambda,\lambda')$ is $\epsilon$-thick
  for all $\lambda, \lambda' \in \Lambda$.
\end{thm}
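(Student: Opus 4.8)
I would first record two reductions. For the thickness claim, note that ``$geod(\lambda,\lambda')$ is $\epsilon$-thick'' is a statement about the systole along a bi-infinite Teichm\"uller geodesic, hence is invariant under the mapping class group. Since the limit set will be self-similar, $\Lambda=\bigcup_{n} h_n\Lambda$, any pair of \emph{distinct} laminations in $\Lambda$ is, after applying a suitable element of $\hat H$, a pair lying in two \emph{different} top-level pieces $h_i\Lambda$, $h_j\Lambda$ (where $i\neq j$ comes from the first place their codings diverge). So it suffices to produce a single $\epsilon>0$ controlling the finitely many families $geod(\cdot,\cdot)$ with endpoints in $h_i\Lambda$ and $h_j\Lambda$. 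For the curve graph claim, once $\Lambda\subset\mathcal{PML}$ is shown to be path-connected and to consist of (uniquely ergodic, filling) cobounded laminations, Klarreich's identification of $\partial\mathcal C(S)$ with ending laminations gives a continuous map $\Lambda\to\partial\mathcal C(S)$; standard Gromov-hyperbolicity arguments show the limit set of $\hat H\gamma_0$ is independent of $\gamma_0$ and equals the image of this map, which is then path-connected and cobounded.

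The construction itself is the heart of the matter. Fix $\epsilon_0>0$ and let $\delta_0$ be the constant produced by the uniform thickness theorem (the first theorem of this section) for $\epsilon_0$; the $\epsilon$ in Theorem~\ref{thm:markov} will be (a fixed fraction of) $\delta_0$. Start from a pseudo-Anosov $f$ with cobounded, $\epsilon_0$-thick axis $A$ meeting a bounded neighbourhood of $X_0$, so that both fixed laminations $\mu^\pm$ of $f$ are $\epsilon_0$-thick relative to $X_0$ (such $f$ exist, after conjugating to bring $A$ near $X_0$). Apply the uniform thickness theorem to $\mu^-,\mu^+$ to get a $\delta_0$-thick path $\gamma\colon[0,1]\to\mathcal{PML}$ from $\mu^-$ to $\mu^+$. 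The model for $\Lambda$ will be an arc: subdivide $[0,1]$ into consecutive subintervals $I_1,\dots,I_k$, let the letter $n$ stand for $I_n$, and let $G$ encode the adjacency/progress rule needed below. To each $n$ I would attach a mapping class $h_n$ --- a large power of a pseudo-Anosov built by ``pushing the template forward'' --- so that $h_n$ carries the model arc onto the subarc over $I_n$, with attracting fixed lamination a point of $\gamma(\overline{I_n})$, and with all fixed laminations of the auxiliary conjugates again $\epsilon_0$-thick relative to $X_0$. Choosing the powers large, a ping-pong argument shows $h_{n_1}\cdots h_{n_j}\gamma_0$ converges for every admissible word; write the limit $\xi(n_1n_2\cdots)$. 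One then arranges the transition rule $G$ and the $h_n$ so that consecutive pieces overlap in exactly one point, $h_n\bigl(\xi(\text{last admissible word in }I_{n+1})\bigr)=h_{n+1}\bigl(\xi(\text{first admissible word in }I_n)\bigr)$, and in no other point; the quotient of the admissible shift space by the fibres of $\xi$ is then an arc, so $\Lambda=\xi(\Sigma)$ is a path-connected arc in $\mathcal{PML}$, and by construction each of its points is a cobounded lamination with a uniform constant.

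To obtain the uniform thickness of pairwise geodesics I would combine the uniform thickness theorem with a quantitative combination argument in the style of the thick-hyperbolic-plane constructions of Leininger--Schleimer \cite{LS-thick}: since each $h_n$ translates along a long, uniformly thick Teichm\"uller segment, and these segments are ``aligned'' (bounded projection to the relevant subsurfaces) as dictated by $G$, the orbit $\hat H X_0$ together with these segments forms a quasiconvex, uniformly thick core $Q\subset\mathcal T(S)$. Any bi-infinite geodesic with endpoints in two distinct top-level pieces then fellow-travels $Q$ on bounded neighbourhoods and hence is $\epsilon$-thick for a uniform $\epsilon$; by the mapping-class invariance noted above this handles every pair of distinct laminations in $\Lambda$.

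I expect the main obstacle to be the tension between path-connectivity and this uniform thickness. Path-connectivity forces consecutive pieces $h_n\Lambda$, $h_{n+1}\Lambda$ to touch, which is exactly the kind of ``failure of ping-pong separation'' that normally breaks the estimates one uses to control geodesics --- in particular one must verify that geodesics between two \emph{distinct} laminations lying very close to a touching point still fellow-travel the thick core rather than wandering into the thin part. The plan is to place every touching point on the thick template path $\gamma$ (or one of its $\hat H$-translates), and to take all parameters --- the powers defining the $h_n$, the fellow-travelling lengths, the mesh of the subdivision --- large relative to $\delta_0$ and to the hyperbolicity constants of $\mathcal C(S)$, so that the overlap structure survives the ping-pong estimates and the thickness constant does not degrade as one passes to the infinitely iterated pieces. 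Verifying that these choices are mutually consistent is where the real work lies.
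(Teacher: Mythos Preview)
Your proposal has the right intuition --- build a finite ``alphabet'' of mapping classes whose admissible words code a path-connected thick limit set --- but the specific mechanism you describe has a genuine gap. You want each $h_n$ to ``carry the model arc onto the subarc over $I_n$'', i.e.\ to realise $\Lambda$ as the attractor of an iterated function system with $h_n(\gamma([0,1]))=\gamma(I_n)$. A mapping class acts on all of $\mathcal{PML}$ by a fixed piecewise-projective map; there is no reason a given thick arc $\gamma$ should be sent \emph{exactly} onto one of its own subarcs by any mapping class, let alone by a large pseudo-Anosov power with prescribed attracting fixed point. This rigidity is not a matter of choosing parameters carefully --- it simply fails generically. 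Without it, neither the arc structure of $\Lambda$ nor the overlap condition ``consecutive pieces touch in exactly one point'' follows, and the whole path-connectivity argument collapses.

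The paper avoids this by abandoning exact self-similarity. Its alphabet is not a set of maps sending a fixed arc to subarcs; rather, the vertices of the Markov graph are \emph{local data} $(\Phi,(\eta_a,\eta_b))$ coming from splitting packages of a finite stock of model paths, and the edges record the no-backtracking (PIP) condition. The limit set is then built as a limit of \emph{concatenations} of images of model paths, not iterates of a single template. Path-connectivity is obtained by a counting argument (the ``Lots of Matches'' condition, using $20$ connection points per face so that at most one is ever forbidden), which guarantees that at every stage one can choose the next model path to agree at an endpoint with the previous one while still satisfying PIP --- this is Proposition~\ref{prop:graph connected}. Thickness of $\Lambda$ and of all pairwise geodesics comes directly from the no-backtracking condition via Lemma~\ref{lem:coboundedness-criterion}, with no need for your self-similarity reduction. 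Finally, to package the result as a genuine subshift on a set $H$ of mapping classes (rather than on abstract graph vertices), the paper conjugates by distinct powers of an auxiliary pseudo-Anosov to make the edge-to-mapping-class assignment injective.
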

The above statement is technical, but it improves our understanding of paths of cobounded laminations. Firstly, prior to this work, paths of cobounded laminations were constructed via point pushes and agreed with point pushes on an open set. Our paths are limits of paths with this property, but do not themselves have this property. Additionally, our paths have a \emph{fractal} structure like the paths coming from a putative convex cocompact subgroup of the mapping class group would, where the image of a path under mapping class group elements give other pieces. Lastly, we were motivated by a specific argument that rules out certain types of path connected cobounded subsets of laminations, which we provide in the appendix. This argument combines dynamics and geometry and works just as well for a limit set coming from a shift of finite type as it does for the limit set of a group. Thus Theorem \ref{thm:markov} suggests pessimism for this argument proving there is no convex cocompact subgroup of the mapping class group.

\subsection{Outline of proof}
In both of our theorems, we use the curve graph to certify
thickness. Namely, the laminations will be constructed as limits of
images of curves $\Psi_i\alpha$ under sequences $(\Psi_i)$ of mapping
classes. We will choose these mapping classes in such a way that the
sequence $\Psi_i\alpha$ is a (parametrized) quasi-geodesic in the
curve graph and in the mapping class group -- which is known to imply
$\delta$--thickness.

Hence, the task becomes to guarantee that the sequences $(\Psi_i)$ can
be chosen flexibly enough to build paths (with prescribed
endpoints). Here again, the Gromov hyperbolicity of the curve graph
enters: if a partial sequence $\Psi_i\alpha, i=1, \ldots, N$ is given,
then the condition that ``$\Psi_{N+1}\alpha$ continues to make
progress'' is a purely local, and fairly mild condition not to
backtrack too much.

We then exploit the fact that we have many paths of cobounded
laminations already (from our previous work in \cite{Paths}) to show
that there are many choices for $\Psi_{N+1}$ from a uniformly finite
set.  To make this sketch precise, we need to carefully study
splitting sequences of train tracks.

\subsection{Outline of paper} In Section 2 we introduce the basic, standard objects of this paper, train track patches on $\mathcal{PML}$, the Curve graph and its Gromov boundary and Teichm\"uller space. In Section 3 we discuss splitting sequences and prove Proposition \ref{prop:divide}, a key result about concatenating mapping classes coming from splitting points in a finite set of paths of uniquely ergodic laminations. Section 4 gives criterion to build paths of thick laminations from limits of concatenations of mapping classes that do not backtrack (too much). Section 5 proves the path connectivity of the Morse boundary. Section 6 proves Theorem \ref{thm:markov}. The paper ends with an appendix showing ruling out that certain types of curves in $\mathcal{PML}$ can be uniformly thick. 

\subsection{Acknowledgements:} The authors would like to thank
Alessandro Sisto for pointing out the application to Morse boundaries,
and interesting discussions. JC was supported in part by NSF grants
DMS-2055354 and DMS-2350393. SH was supported in part by the DFG SPP
2026 ``Geometry at Infinity''.

\section{Tools}
In the following subsections we collect results on the three main
tools we use to build our paths: train tracks (giving charts for
$\mathcal{PML}$ and, later, splitting sequences), curve graphs
(allowing flexible constructions via coarse geometry), and
Teichm\"uller spaces (which we use to certify the coboundedness). The
results here are a combination of standard results, and slightly
original constructions suitable for our purposes.

\subsection{Train Tracks}
In this section we collect some results on train tracks which we will
need throughout to obtain suitable charts for the sphere of projective
measured laminations.

The starting point are so-called \emph{standard train tracks}
discussed in detail in \cite{PH}. By the discussion in \cite[Section 2.6]{PH} (in particular, \cite[Proposition~2.6.2]{PH}), there is a finite
collection of maximal tracks $\tau_1, \ldots, \tau_R$ with the
following properties.
\begin{enumerate}
\item The polyhedra of measures $P(\tau_i)$ cover all of
  $\mathcal{PML}$: $\bigcup_i P(\tau_i) = \mathcal{PML}$.
\item If two such polyhedra intersect, say
  $P(\tau_i) \cap P(\tau_j) \neq \emptyset$, then the tracks
  $\tau_i, \tau_j$ have a common subtrack $\eta_{i,j}$, and we have
  $P(\tau_i) \cap P(\tau_j) = P(\eta_{i,j})$.
\end{enumerate}

In other words, the polyhedra $P(\tau_i)$ define a partition of
$\mathcal{PML}$ into (closed) train track patches.  We call the
$\tau_i$ and all of their subtracks \emph{model tracks} and denote
the set containing these train tracks by $\mathcal{T}$.

If $\tau$ is a train track, then we say that $\sigma$ is obtained by a
\emph{full split} if $\sigma$ is obtained by splitting each large
branch of $\tau$ exactly once. Observe that the polyhedra of the two
possible splits intersect in a codimension-$1$--face and cover all of
the the polyhedron of $\tau$. We call any train track obtained from a
model track by performing $k$ full splits towards a (uniquely ergodic)
lamination a \emph{derived track (of level $k$)}. By construction and
property (2) above, the polyhedra of two derived tracks of level $k$
are disjoint or intersect in a face. Since there are only finitely
many possible types of train tracks up to the action of the mapping
class group, we may (by possibly successively replacing model tracks
by their splits a finite number of time) further assume:
\begin{enumerate}
\item[(3)] Any derived track is the mapping class group image of some
  model track: i.e. if $\sigma$ is obtained from a model track by some
  number of full splits, then $\sigma = \Phi\tau_i$ for a model track
  $\tau_i$ and a mapping class $\Phi$.
\end{enumerate}

The following is an immediate consequence of (3) and the fact that the 
stabiliser of a large track is finite.
\begin{lem}\label{lem:finitely-many-models}
  If $\eta$ is any derived track, then there is a model track $\tau$
  and a mapping class $F$ so that $F\tau = \eta$. In particular, $F$
  induces a linear map between the polyhedra $P(\tau)$ and
  $P(\eta)$. Furthermore, the mapping class $F$ is unique up to finite
  ambiguity.
\end{lem}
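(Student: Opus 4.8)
The plan is to read off each assertion essentially directly from property (3) and from elementary facts about how mapping classes act on train tracks. First I would handle existence. By definition a derived track $\eta$ is obtained from some model track by finitely many full splits, and a full split of a maximal track is again maximal, so $\eta$ is maximal. Property (3) then states precisely that $\eta = \Phi\tau_i$ for some model track $\tau_i$ and some mapping class $\Phi$; since $\eta$ is maximal and $\Phi$ is a homeomorphism, $\tau_i$ is maximal as well. Setting $\tau = \tau_i$ and $F = \Phi$ gives the first claim.

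Next I would address the induced linear map. A mapping class $F$ applied to a train track $\tau$ carries a transverse measure on $\tau$ (a nonnegative weight function on the branches satisfying the switch conditions) to a transverse measure on $F\tau$ by transporting weights along $F$. This assignment is visibly linear in the weights and is inverted by the assignment induced by $F^{-1}$, so $F$ restricts to a linear isomorphism of the polyhedra of measures $P(\tau) \to P(\eta)$.

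For uniqueness up to finite ambiguity, suppose $F\tau = \eta = F'\tau$. Then $(F')^{-1}F$ fixes $\tau$, i.e.\ lies in the stabiliser $\mathrm{Stab}(\tau)$ in the mapping class group, so the set of mapping classes taking $\tau$ to $\eta$ is the coset $F\,\mathrm{Stab}(\tau)$, of cardinality $|\mathrm{Stab}(\tau)|$. Since $\tau$ is maximal it fills $S$; a mapping class fixing $\tau$ permutes its finitely many branches, and the resulting homomorphism from $\mathrm{Stab}(\tau)$ to the symmetric group on the branch set has trivial kernel (a mapping class fixing $\tau$ branch-by-branch is isotopic to the identity, as $\tau$ fills). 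Hence $\mathrm{Stab}(\tau)$ is finite, which is the last claim.

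I do not expect any genuine obstacle: the lemma is flagged as immediate, and every step is formal except the finiteness of $\mathrm{Stab}(\tau)$, which is the standard fact that a maximal (filling) train track has finite stabiliser and may simply be cited from \cite{PH}.
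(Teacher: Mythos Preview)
Your argument is correct and is exactly the approach the paper indicates: the lemma is stated there as an immediate consequence of property~(3) together with the fact that a large (maximal) train track has finite stabiliser, and you have simply spelled out those two ingredients. The only cosmetic addition would be to note that, since there are only finitely many model tracks to begin with, the possibility of $\eta = F'\tau'$ for a \emph{different} model track $\tau'$ contributes at most a further finite factor to the ambiguity; your coset argument already handles each fixed $\tau$.
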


For any train track $\tau$ (not necessarily one of the $\tau_i$
above), we call pair of faces of $P(\tau)$ \emph{connectable} if each
of the faces contains a cobounded lamination, and thus there is a path
of cobounded laminations connecting the two faces\footnote{Note that we
  do not assume that this path is completely contained in $P(\tau)$.}.

We also have the following:
\begin{lem}\label{lem:left-to-right-set}
  Let $S$ be the set of all mapping classes $\theta$ so that there
  exist $\eta_i$, $\eta_j$, connectable faces of model tracks so that
  $\theta(\eta_i) = \eta_j$. % (and thus $\theta$ induces a
                                   % linear bijection between
                                   % $P(\eta_i)$ and $P(\eta_j)$).
  Then $S$ is finite.
\end{lem}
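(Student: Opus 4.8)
The plan is to reduce the statement to the finiteness of the stabiliser of a single \emph{large} (filling) train track, which is a standard fact — indeed it is precisely the one already invoked in the paragraph preceding Lemma~\ref{lem:finitely-many-models}.

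First I would record that every face of a polyhedron $P(\tau_k)$ of a model track is itself the polyhedron of a model track: faces correspond to subtracks, and subtracks of subtracks of the $\tau_k$ are again subtracks of some $\tau_k$. Since each $\tau_k$ has only finitely many branches it has only finitely many subtracks, so there are only finitely many model tracks, and hence only finitely many possibilities for the ordered pair $(\eta_i,\eta_j)$ appearing in the definition of $S$. Thus it suffices to fix one connectable pair $(\eta_i,\eta_j)$ and bound the number of mapping classes $\theta$ with $\theta(\eta_i)=\eta_j$. If there is at least one such $\theta$, fix it and call it $\theta_0$; then for any other such $\theta$ the composition $\theta_0^{-1}\theta$ fixes $\eta_i$, so the set of admissible $\theta$ is contained in the coset $\theta_0\cdot\mathrm{Stab}(\eta_i)$. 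Hence everything comes down to showing that $\mathrm{Stab}(\eta_i)$ is finite.

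The essential point, and the only place where the hypothesis ``connectable'' is used, is that $\eta_i$ is a large (filling) train track. By definition of connectability, $\eta_i$ carries a cobounded lamination $\lambda$, and a cobounded lamination is minimal and fills $S$. Realising $\lambda$ inside a tie neighbourhood $N(\eta_i)$, no complementary component of $N(\eta_i)$ can contain an essential simple closed curve, since such a curve would be disjoint from $\lambda$; as $S$ is closed this forces every complementary component to be a disk, i.e. $\eta_i$ is large. One small point of care: $\lambda$ need only be carried by $\eta_i$, possibly with some branch weights equal to zero; in that case one first passes to the minimal subtrack of $\eta_i$ carrying $\lambda$, which is large by the argument above and is still a face of a model track, and then observes that enlarging a large track within the class of train tracks only subdivides the complementary disks, so $\eta_i$ itself is large.

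Finally, the stabiliser of a large train track is finite — this is the fact already cited above for maximal tracks, and the same proof applies: a mapping class fixing $\eta_i$ permutes its finitely many branches and switches, giving a homomorphism from $\mathrm{Stab}(\eta_i)$ to a finite group, and any element of the kernel can be isotoped to be the identity on $N(\eta_i)$, hence is supported on a union of disks rel boundary and is therefore trivial by the Alexander method. Thus $\mathrm{Stab}(\eta_i)$ is finite, and combined with the reduction above this shows that $S$ is finite. The only step that is more than bookkeeping is the passage from ``connectable'' to ``$\eta_i$ is large''; once that is established there is no serious obstacle.
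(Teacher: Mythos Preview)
Your proof is correct and follows the same overall strategy as the paper: reduce to finitely many ordered pairs $(\eta_i,\eta_j)$, pass to a coset of $\mathrm{Stab}(\eta_i)$, and then use that large tracks have finite stabiliser. The only difference is in how largeness of $\eta_i$ is established. The paper simply asserts the general fact that \emph{any} codimension-$1$ face of a maximal track is still large, without invoking connectability at all; you instead use the connectability hypothesis, arguing that a track carrying a cobounded (hence filling) lamination must be large. Your route is slightly longer but has the advantage of not being tied to codimension $1$, and it makes explicit where the hypothesis is actually needed.
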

\begin{proof}
  Since the set of model tracks is finite, it suffices to observe that
  any codimension-1 face of a maximal track is still large, and thus
  has finite stabiliser in the mapping class group.
\end{proof}

We require the following lemma, which will guarantee the existence of
many uniquely ergodic laminations carried by codimension-$1$ faces of
the train tracks in our decomposition. We expect that the result is
known to experts (but we were unable to locate it in the literature), so we give a full proof. 
\begin{lem}\label{lem:many-ues}
  Suppose that $\eta$ is a train track which defines a face of the
  polyhedral decomposition defined by the model tracks $\tau_i$.

  Suppose that $\eta$ carries a uniquely ergodic lamination. Then
  $\eta$ carries infinitely many uniquely ergodic laminations.
\end{lem}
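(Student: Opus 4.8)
The plan is to produce new uniquely ergodic laminations carried by $\eta$ by applying a carefully chosen infinite-order mapping class that preserves $\eta$. Here is the idea in more detail. Suppose $\lambda$ is the uniquely ergodic lamination carried by $\eta$. Since $\eta$ is a face of the polyhedral decomposition (not a maximal track), it has positive codimension: the cone of transverse measures $P(\eta)$ sits inside a strictly larger-dimensional space. First I would argue that, because $\eta$ carries a uniquely ergodic (hence minimal and filling) lamination $\lambda$, the track $\eta$ is \emph{recurrent and transversely recurrent}, and in particular it carries laminations with full support and is ``large enough'' to be split in essentially two ways along each large branch. The subtlety is that $\eta$ is not maximal, so we cannot immediately invoke the standard splitting machinery that applies to maximal tracks; however, one can either complete $\eta$ to a maximal track $\hat\eta$ carrying the same set of laminations on a neighborhood of $P(\eta)$, or work directly with subtracks. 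What we really need is that $\lambda$ can be obtained as a nested intersection of a splitting sequence $\eta = \eta_0 \succ \eta_1 \succ \eta_2 \succ \cdots$ of subtracks of $\eta$, all of which carry $\lambda$.

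The key step is then a Dehn-twist / pseudo-Anosov insertion. Because $\lambda$ is filling, there is a subsurface $Y \subsetneq S$ (or all of $S$) together with a partial pseudo-Anosov (or high-power Dehn twist) $\phi$ supported near $\eta$ such that $\phi(\eta)$ is carried by $\eta$ -- i.e. $\phi$ is \emph{carried by $\eta$} in the sense of inducing a splitting sequence from $\eta$ back to (a mapping-class image of) $\eta$. Concretely, one can find a closed curve or a multicurve $c$ carried by $\eta$ with $c$ ``short'' in the $\eta$-coordinates, and observe that an appropriate power of the twist about $c$, or a pseudo-Anosov generated by two such filling curves carried by $\eta$, maps $P(\eta)$ strictly inside itself (after the identification from Lemma \ref{lem:finitely-many-models}). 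The stable lamination $\mu$ of such a $\phi$ is then uniquely ergodic (stable laminations of pseudo-Anosovs are uniquely ergodic and filling) and carried by $\eta$. Moreover $\mu \neq \lambda$, because $\mu$ is fixed by $\phi$ while $\lambda$ need not be -- and one can always arrange $\phi$ to not fix $\lambda$, e.g. by choosing the supporting curves to intersect $\lambda$. Varying the construction (different supporting curves, different powers) then yields infinitely many distinct uniquely ergodic laminations carried by $\eta$.

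An alternative, perhaps cleaner route avoiding pseudo-Anosov bookkeeping: use that $\eta$ carries $\lambda$ to find \emph{any} infinite reduced splitting sequence $\eta = \eta_0 \succ \eta_1 \succ \cdots$ with $\bigcap_n P(\eta_n) = \{\lambda\}$, then perturb it. Since each split offers a binary choice and $\lambda$ being uniquely ergodic forces the intersection to be a single point exactly along the ``$\lambda$-sequence'' of choices, one shows that for each $n$ the sister track $\eta_n'$ (obtained by making the opposite split at stage $n$) still carries a uniquely ergodic lamination $\lambda_n \ne \lambda$ with $\lambda_n \to \lambda$: here one invokes that a generic (in the sense of a full-measure or residual set) infinite splitting sequence from a recurrent track limits to a uniquely ergodic lamination (a Masur-type genericity statement), so among the uncountably many branches emanating from the sister tracks $\eta_n'$, uncountably many terminate at uniquely ergodic points. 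This gives infinitely many (in fact uncountably many) uniquely ergodic laminations carried by $\eta$.

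The main obstacle will be the first route's requirement that the perturbing mapping class be genuinely \emph{carried by} $\eta$ and nontrivial on $P(\eta)$ -- i.e.\ producing an infinite-order element of the (possibly large) stabilizer-like semigroup of splittings of $\eta$ whose attracting lamination differs from $\lambda$. For the second route, the main obstacle is making precise the genericity statement for splitting sequences of a \emph{non-maximal} train track $\eta$ and ensuring the sister tracks $\eta_n'$ are themselves recurrent enough to admit such generic continuations; this is where one must be careful that $\eta$ being a face of the decomposition (hence arising from honest model tracks) supplies enough structure. I expect the write-up to take the first route, as the pseudo-Anosov / filling-curve construction localizes the difficulty to a single explicit mapping class.
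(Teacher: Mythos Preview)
Your first route shares the paper's core idea---find a pseudo-Anosov $f$ sending $P(\eta)$ into itself and harvest uniquely ergodic laminations from its dynamics---but both the construction of $f$ and the endgame have gaps that the paper handles differently.

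For the construction, assembling $f$ from Dehn twists about curves carried by $\eta$ does not obviously give $f(P(\eta)) \subset P(\eta)$; a twist about such a curve need not preserve the carrying relation, and you correctly flag this as the ``main obstacle.'' The paper bypasses it entirely by running the full splitting sequence $\eta = \eta_0, \eta_1, \ldots$ toward $\lambda$ and using pigeonhole: there are only finitely many train-track types up to the mapping class group, so (after also tracking the adjacent maximal tracks $\tau^\pm_n$) some $\eta_m = f(\eta_n)$ with $m > n$, and since $\eta_m$ is a further split of $\eta_n$ one gets $f(P(\eta_n)) = P(\eta_m) \subset P(\eta_n)$ for free. Positivity of the carrying matrix (from $\lambda$ lying in the interior of $P(\eta)$) then forces $f$ to be pseudo-Anosov with stable lamination $\lambda_f \in P(\eta)$. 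For the endgame, your claim that one can ``always arrange $\phi$ to not fix $\lambda$'' is unjustified: $\lambda$ is filling, so every curve intersects it and your proposed criterion is vacuous. The paper does not try to avoid the case $\lambda_f = \lambda$. If $\lambda_f \neq \lambda$, the orbit $f^k(\lambda)$ already gives infinitely many distinct uniquely ergodic points in $P(\eta)$. If $\lambda_f = \lambda$, the paper invokes its path-connectivity result for uniquely ergodic laminations (Theorem~\ref{thm:previous-paths}): take a uniquely ergodic path $c$ joining a point of $P(\tau^+_n)$ to one of $P(\tau^-_n)$, avoiding $\lambda_f$ and $\mu_f$; north--south dynamics forces $f^n(c)$ into a small neighbourhood of $\lambda_f$ for large $n$, while its endpoints remain on opposite sides of $P(\eta)$, so $f^n(c)$ must cross $P(\eta_n)$ at new uniquely ergodic points. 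This fallback is where the real content lies and is missing from your outline. Your second route would need a Masur-type genericity statement for splitting sequences of non-maximal tracks together with recurrence of the sister tracks $\eta_n'$, neither of which is standard; the paper does not go that way.
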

\begin{proof}
  For simplicity of notation, we assume that the uniquely ergodic
  lamination $\lambda$ carried by $\eta$ lies in the interior of
  $P(\eta)$, and furthermore that
  \[ P(\tau^+) \cap P(\tau^-) = P(\eta), \] where $\tau^+ \neq \tau^-$
  are two maximal train tracks from the polyhedral decomposition.  In
  the general case, one would first replace $\eta$ with the subtrack
  $\eta'$ which carries $\lambda$ in the interior of its polyhedron,
  and then consider the finite set $\tau^1, \ldots, \tau^l$ of maximal
  train tracks in the chosen decomposition which have $\eta'$ as a
  subtrack. The argument then proceeds as in the simpler case, with
  the only difference that one has to keep track of $l$ instead of $2$
  maximal tracks. 
 
  \smallskip Let $\eta_n$ be the full splitting sequence of $\eta$
  defined by a uniquely ergodic lamination $\lambda$. We can choose
  sequences $\tau_i^+, \tau_i^-$ (by performing several splits at
  once) so that
  \[ P(\tau^+_n) \cap P(\tau^-_n) = P(\eta_n). \]

  By finiteness of the set of train tracks up to the mapping class
  group, there are indices $n_i$ so that the triples
  $(\tau^+_{n_i}, \tau^-_{n_i}, \eta_{n_i})$ are mapping class group
  images of each other.

  Since $\lambda$ is uniquely ergodic, and the intersection of the
  polyhedra $P(\eta_{n_i})$ therefore consists just of $\lambda$
  \cite{Mosher}, there is a pair of indices $m>n$ and
  a mapping class $f$ so that
  \[ f((\tau^+_n, \tau^-_n, \eta_n)) = (\tau^+_m, \tau^-_m, \eta_m), \]
  and so that the carrying matrix for the $f(\eta_n) < \eta_m$ is positive
  (since $\lambda$ lies in the interior of $P(\eta)$).

  Now, consider the splitting sequence
  \[ \eta_n, \ldots, \eta_m = f(\eta_n), f(\eta_{n+1}), \ldots,
    f(\eta_m)=f^2(\eta_n), \ldots \] obtained by concatenating the
  splitting sequence from $\eta_n$ to $\eta_m$ images under powers of
  $f$. Since the carrying matrix for $f(\eta_n) < \eta_m$ is positive,
  the intersection of the polyhedra $f^k(\eta_n)$ is a single
  lamination, say $\lambda_f$, which is uniquely ergodic.

  This implies that $f$ is pseudo-Anosov with stable lamination
  $\lambda_f$.  In particular its stable lamination $\lambda_f$ is
  carried by $\eta_n$ (hence by $\eta$), and its unstable lamination
  $\mu_f$ is not carried by $\eta_n$ (as otherwise, the iterates under
  $f$ would lie in $f^k(P(\eta_n))$ and therefore converge to
  $\lambda_f$). Furthermore, $f$ sends $P(\eta_n)$ into itself.

  If $\lambda_f \neq \lambda$, then $f^k(\lambda)$ are all distinct
  and contained in $\eta_n$, and the lemma follows. Otherwise, we
  argue as follows. Pick points
  $x \in P(\tau_n^-), y \in P(\tau_n^+)$, and (using
  Theorem~\ref{thm:previous-paths}) a path $c$ of uniquely ergodic
  laminations joining them, which is disjoint from $\lambda_f,
  \mu_f$. By construction, the points $x_n =f^n(x), y_n=f^n(y)$ are
  contained in $P(\tau^-)\cup P(\tau^+)$, lie on different sides of
  $P(\eta)$, and converge to $\lambda_f$. Thus, the paths $f^n(c)$ are
  paths that join $x_n$ to $y_n$ and, by north-south dynamics, will
  eventually be completely contained in a small neighbourhood of
  $\lambda_f$. This implies that for large $n$, the path $f^n(c)$
  intersects $P(\eta_n)$. The intersection points are not
  $\lambda_f,\eta_f$ as those are fixed by $f$ and disjoint from
  $c$. Thus, these intersection points give rise to further uniquely
  ergodic laminations on $P(\eta)$, showing the lemma.
\end{proof}

\subsection{Curve Graphs}
In this section we collect results on the coarse geometry of curve
graphs. These will be used throughout to construct laminations and
certify that they are thick. 

\smallskip First, we need the following local control. In its
definition, we say that a concatenation $a\ast b$ of two
quasigeodesics (where the endpoint of $a$ is the initial point of $b$)
has \emph{$R$--backtracking of length $l$} if there is a terminal
segment of $a$ and an initial segment of $b$ of length $l$ which
$R$--fellowtravel. If we do not specify $R$, we assume $R = 2\delta$
for $\delta$ the hyperbolicity constant of the curve graph. By
hyperbolicity, we have the following standard result (see
e.g. \cite[Lemma~4.2]{Min05} for a much stronger version).

\begin{lem}\label{lem:bdd-backtrack-implies-qgeod}
  For any backtracking allowance $l>0$, there is a length certificate
  $L>0$ and a quasigeodesic quality $K>0$ with the following property.

  Suppose that $g_i$ are geodesics of length at least $L$, so that
  $g_i\ast g_{i+1}$ does not have backtracking of length $l$ for all
  $i$. Then the concatenation
  \[ g_1 \ast \cdots g_n \] is a $K$--quasigeodesic.
\end{lem}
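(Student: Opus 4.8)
The plan is to exploit Gromov hyperbolicity to upgrade a purely local non-backtracking condition into a global quasigeodesic estimate. First I would set up the standard ``no-backtracking'' geometry: since each $g_i$ is a geodesic, it suffices to bound from below the distance between the endpoints $x_0, x_1, \ldots, x_n$ of the successive segments (where $x_{i-1}, x_i$ are the endpoints of $g_i$), and to know that the path stays close to the concatenation. The key local estimate is that in a $\delta$--hyperbolic space, if $g_i \ast g_{i+1}$ does not $2\delta$--backtrack for length $l$, then a geodesic $[x_{i-1}, x_{i+1}]$ passes within a bounded distance (say $2\delta$) of the midpoint region of the concatenation, and moreover $d(x_{i-1}, x_{i+1}) \geq d(x_{i-1}, x_i) + d(x_i, x_{i+1}) - 2l - C\delta$ for a universal constant $C$. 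This is the ``thin triangles'' computation: the Gromov product $(x_{i-1} \mid x_{i+1})_{x_i}$ is comparable to the length of the fellow-traveling overlap, which by hypothesis is at most $l$ (up to the $2\delta$ additive error in the definition of backtracking).

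Next I would run a telescoping/inductive argument along the whole concatenation. Choosing $L$ much larger than $l$ and $\delta$ (say $L \geq 100(l + \delta + 1)$ will do, with the exact constant extracted from the local estimate above), one shows by induction on $n$ that the path $g_1 \ast \cdots \ast g_n$ is a local quasigeodesic with the property that every subpath of bounded combinatorial length (a controlled number of the $g_i$'s) is already a quasigeodesic with uniform constants. Then I invoke the standard ``local quasigeodesics in hyperbolic spaces are global quasigeodesics'' principle (this is exactly the content of the stronger statement cited, \cite[Lemma~4.2]{Min05}, or alternatively the Morse/stability lemma combined with a covering argument): there is a scale $L_0 = L_0(\delta, l, K_0)$ such that a path which is a $K_0$--quasigeodesic on every subsegment of length $\leq L_0$ is globally a $K$--quasigeodesic for some $K = K(\delta, K_0)$. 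Setting $K_0$ from the two-segment estimate and $L \geq L_0$ gives the conclusion; the resulting $L$ and $K$ depend only on $l$ and the hyperbolicity constant $\delta$, as required.

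The main obstacle is the bookkeeping in the local-to-global step: one must be careful that the ``no backtracking of length $l$'' hypothesis, which is only imposed on \emph{consecutive} pairs $g_i \ast g_{i+1}$, genuinely forces non-backtracking (with a possibly larger allowance) on \emph{all} pairs $g_i \ast \cdots \ast g_j$, and hence that the geodesic between far-apart endpoints cannot fold back through the concatenation. This is where hyperbolicity does the real work: a fellow-traveling between a terminal segment of $g_1 \ast \cdots \ast g_i$ and an initial segment of $g_{i+1} \ast \cdots \ast g_n$ of length substantially exceeding $l$ would, after projecting to $g_i$ and $g_{i+1}$ and using that these are long geodesics ($\geq L$), produce a long fellow-traveling of $g_i \ast g_{i+1}$, contradicting the hypothesis. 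Making this projection argument quantitative — tracking how the $\delta$'s and $l$'s accumulate over the length of the overlap — is the only genuinely delicate point; everything else is the standard thin-triangles toolkit. Since a stronger and fully general statement is available in \cite{Min05}, in the actual write-up I would simply cite it, but the above is the self-contained route.
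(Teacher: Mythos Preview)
Your proposal is correct and in fact more detailed than the paper: the paper gives no proof at all, treating the lemma as a standard consequence of hyperbolicity and simply citing \cite[Lemma~4.2]{Min05} for a stronger version. Your final remark that you would ``simply cite it'' is exactly what the authors do, and your self-contained sketch (local Gromov-product estimate plus the local-to-global principle for quasigeodesics in $\delta$--hyperbolic spaces) is the standard route to that cited result.
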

%\begin{proof}[Sketch of proof]
%  By hyperbolicity and the backtracking condition, the difference
%  between the distance between points $p \in g_i$ and $q\in g_j$ is at
%  most $2l|i-j|$. If $L$ is chosen to be much larger than $l$, this
%  can be absorbed into the multiplicative constant. 
%\end{proof}

In light of the previous lemma, we introduce the following notation. A
path in the curve graph is an \emph{$l$--backtracking
  broken geodesic} if it is a concatenation
\[ g_1 \ast \cdots g_n \] of geodesics, so that that $g_i\ast g_{i+1}$
does not have backtracking of length $l$ for all $i$, and all $g_i$
have length at least $L$ (for the constant of the previous lemma). We
observe two easy important properties: 
\begin{itemize}
\item Any $l$--backtracking broken geodesic is a
  $K$--quasigeodesic (for the $K$ from Lemma~\ref{lem:bdd-backtrack-implies-qgeod}).
\item If $g_1 \ast \cdots g_n$ is an $\ell$--backtracking broken geodesic,
  $g_{n+1}$ is a geodesic of length at least $L$ whose initial point
  is the endpoint of $g_n$, and if $g_n\ast g_{n+1}$ does not have
  backtracking of length $l$, then $g_1 \ast \cdots g_n\ast g_{n+1}$
  is an $\ell$--backtracking broken geodesic as well.
\end{itemize}

Next, we aim to show that paths of bounded backtracking can easily be
constructed, as long as there is some amount of choice available.

To do so, we first recall a basic result on the Gromov boundary of any
hyperbolic metric space; compare e.g. \cite[Lemma
III.H.3.21]{BH}\footnote{In the source, Bridson-Haefliger write that
  the visual metric induces the same topology for proper spaces -- in
  the case of the non-proper curve graph the topology is defined via
  Gromov products in the first place, so the fact that the metric
  induces the topology is clear.}. Recall that the \emph{Gromov
  product} of two points in the curve graph (relative to a basepoint)
is defined as
\[ (\alpha\cdot\beta)_\gamma = \frac{1}{2}( d(\alpha, \gamma) + d(\beta, \gamma) - d(\alpha, \beta) ). \]
It is a standard fact that the Gromov product extends to the Gromov boundary, and can then be (essentially) used to metrize the Gromov boundary:
\begin{lem}\label{lem:visual-metrics}
  There are constants $\kappa\in[0,1], \epsilon>0$ so that the following
  holds. There is a metric $d_\infty$ on the Gromov boundary
  $\partial_\infty\mathcal{C}(S)$ (inducing the topology) so that for
  any two points $\xi, \xi' \in \partial_{\infty}\mathcal{C}(S)$ we have:
  \[ \kappa e^{-\epsilon(\xi\cdot\xi')_{\gamma_0}} \leq d_\infty(\xi,\xi')
    \leq e^{-\epsilon(\xi\cdot\xi')_{\gamma_0}}. \]
\end{lem}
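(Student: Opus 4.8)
The statement is a completely standard fact about Gromov hyperbolic spaces, so the plan is simply to recall the classical construction of a visual metric and record the two-sided comparison with the exponential of the Gromov product. First I would fix $\gamma_0$ as the basepoint, recall that the Gromov product $(\xi\cdot\xi')_{\gamma_0}$ extends to $\partial_\infty\mathcal{C}(S)$ — taking, say, the $\liminf$ over sequences $\alpha_n\to\xi$, $\beta_n\to\xi'$ of $(\alpha_n\cdot\beta_n)_{\gamma_0}$ — and note the fundamental estimate that any two such choices of defining sequences give Gromov products agreeing up to an additive error of $2\delta$. This is the key rigidity one needs, and it follows directly from $\delta$-hyperbolicity of $\mathcal{C}(S)$ together with the four-point condition; I would cite this as standard (e.g.\ \cite[III.H.3]{BH} or Ghys--de la Harpe).

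Next, for a parameter $\epsilon>0$ chosen small enough that $e^{3\delta\epsilon}\le\sqrt 2$ (the usual smallness constraint), one defines a ``quasi-metric'' $\rho_\epsilon(\xi,\xi') = e^{-\epsilon(\xi\cdot\xi')_{\gamma_0}}$ for $\xi\ne\xi'$ and $\rho_\epsilon(\xi,\xi)=0$. This $\rho_\epsilon$ is symmetric and vanishes exactly on the diagonal, but only satisfies a weak (quasi-)triangle inequality $\rho_\epsilon(\xi,\xi'')\le C\max\{\rho_\epsilon(\xi,\xi'),\rho_\epsilon(\xi',\xi'')\}$ with $C = e^{2\delta\epsilon}$, again a direct consequence of the $\delta$-inequality for Gromov products of three boundary points (with its $2\delta$ defect). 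One then applies the standard chain/metrization lemma of Frink: define
\[
 d_\infty(\xi,\xi') = \inf\Big\{\textstyle\sum_{k=1}^{n}\rho_\epsilon(\zeta_{k-1},\zeta_k)\Big\}
\]
over all finite chains $\zeta_0=\xi,\zeta_1,\dots,\zeta_n=\xi'$. This $d_\infty$ is manifestly a genuine metric (symmetry, triangle inequality are immediate from the infimum over chains), and the Frink lemma — valid precisely because $C\le\sqrt2$, i.e.\ $\epsilon$ small — gives the two-sided comparison $\tfrac12\rho_\epsilon \le d_\infty \le \rho_\epsilon$. Setting $\kappa = \tfrac12$ (absorbing it into the left-hand constant of the statement; any $\kappa\in[0,1]$ with $\kappa\le\tfrac12$ works) yields the displayed inequalities.

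Finally I would check that $d_\infty$ induces the correct topology on $\partial_\infty\mathcal{C}(S)$. Since the topology on the Gromov boundary is defined via the Gromov product in the first place — a neighbourhood basis at $\xi$ is given by the sets $\{\xi' : (\xi\cdot\xi')_{\gamma_0} > R\}$ as $R\to\infty$ — and since $d_\infty$ is sandwiched between constant multiples of $e^{-\epsilon(\xi\cdot\xi')_{\gamma_0}}$, the metric balls and these standard neighbourhoods are mutually cofinal, so the two topologies coincide; this is exactly the point flagged in the footnote of the statement. The only mild subtlety — hence the ``main obstacle'', though it is really just bookkeeping — is tracking the additive $2\delta$ defects through the extension of the Gromov product to the boundary and verifying the resulting quasi-triangle constant is $\le\sqrt2$ for the chosen $\epsilon$; everything else is formal. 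I would present this briefly and refer to \cite[Lemma III.H.3.21]{BH} for the details.
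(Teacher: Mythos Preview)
Your proposal is correct and is precisely the standard visual-metric construction that the paper has in mind; indeed the paper does not give its own proof but simply cites \cite[Lemma~III.H.3.21]{BH}, whose content is exactly the Frink-chain argument you outline. There is nothing to add.
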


From now on, we will choose a visual metric on
$\partial_\infty\mathcal{C}(S)$ once and for all. The following is immediate from compactness of paths.
%standard argument in point set topology.
\begin{cor}\label{cor:separating-paths}
  Suppose that $\mathfrak{p}_1, \ldots, \mathfrak{p}_k$ is a finite
  collection of paths in
  $\partial_\infty\mathcal{C}(S)$. Then there is a constant $K$ so
  that the following holds: if $x \in \mathfrak{p}_i, y \in \mathfrak{p}_j$ are two points with
  \[ (x\cdot y)_{\gamma_0} > K, \]
  then $\mathfrak{p}_i, \mathfrak{p}_j$ intersect.
\end{cor}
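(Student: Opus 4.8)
The plan is to use that each $\mathfrak{p}_i$ is compact, together with the comparison between the visual metric $d_\infty$ and the Gromov product supplied by Lemma~\ref{lem:visual-metrics}. First I would dispose of the trivial case: if every pair among $\mathfrak{p}_1,\ldots,\mathfrak{p}_k$ already intersects, then any constant $K$ works and there is nothing to prove. So assume henceforth that at least one pair of the paths is disjoint.

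Next, recall that a path is by definition a continuous image of $[0,1]$, so each $\mathfrak{p}_i$ is a compact subset of the metric space $(\partial_\infty\mathcal{C}(S), d_\infty)$. For every pair of indices $i,j$ with $\mathfrak{p}_i \cap \mathfrak{p}_j = \emptyset$, the two sets are disjoint compacta and hence lie at positive $d_\infty$--distance $\delta_{ij} > 0$ from one another. Since the collection is finite, there are only finitely many such pairs, so $\delta := \min\{\delta_{ij} : \mathfrak{p}_i \cap \mathfrak{p}_j = \emptyset\}$ is a strictly positive number. I would then choose $K > 0$ large enough that $e^{-\epsilon K} < \delta$, where $\epsilon$ is the constant from Lemma~\ref{lem:visual-metrics}; for instance $K = \tfrac{1}{\epsilon}\,|\log\delta| + 1$ suffices.

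Finally, suppose $x \in \mathfrak{p}_i$ and $y \in \mathfrak{p}_j$ satisfy $(x\cdot y)_{\gamma_0} > K$. By the upper bound in Lemma~\ref{lem:visual-metrics} we get $d_\infty(x,y) \leq e^{-\epsilon(x\cdot y)_{\gamma_0}} < e^{-\epsilon K} < \delta$. If $\mathfrak{p}_i$ and $\mathfrak{p}_j$ were disjoint, this would contradict the fact that $d_\infty(x,y) \geq \delta_{ij} \geq \delta$; hence $\mathfrak{p}_i \cap \mathfrak{p}_j \neq \emptyset$, which is exactly the assertion. There is really no substantive obstacle here: the content is entirely the compactness of the individual paths and the finiteness of the collection. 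The only points requiring a little care are using the correct side of the inequality in Lemma~\ref{lem:visual-metrics} (it is the upper bound $d_\infty \leq e^{-\epsilon(\cdot\,\cdot\,\cdot)}$, not the lower one, that converts a large Gromov product into a small distance) and noting that the minimum defining $\delta$ is taken over a finite, and in the non-degenerate case nonempty, index set.
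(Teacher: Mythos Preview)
Your argument is correct and is exactly the approach the paper has in mind: the paper simply records the corollary as ``immediate from compactness of paths,'' and what you have written is precisely the standard unpacking of that remark via Lemma~\ref{lem:visual-metrics}.
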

%\begin{proof}
%  We claim that there is a constant $\epsilon>0$ so that the
%  $\epsilon$--neighbourhoods of two paths $\mathfrak{p}_i$ intersect
%  only if the paths intersect. Namely, $\epsilon = \frac{1}{2}m$ for
%  $m$ the smallest positive Hausdorff distance between any two paths works.
%  By Lemma~\ref{lem:visual-metrics} this immediately
%  translates into the desired Gromov product bound.
%\end{proof}

\begin{lem}\label{lem:backtrack-implies-intersection}
  Suppose that $\mathfrak{P}=\{\mathfrak{p}_i\}$ is a finite set of
  paths in the Gromov boundary.
  % with the property: if $\mathfrak{p}_i, \mathfrak{p}_j$ are not disjoint, then they share an endpoint.
  Then there is a %length certificate $L > 0$, a
  Gromov product certificate $B > 0$ and a no-backtracking guarantee
  $l>0$ so that the following holds.

  Suppose that $g$ is a curve graph geodesic of length at least $L$ with endpoint
  $\gamma_0$, suppose that there are $t_i \in [0,1], i=1,2$ and curves $\gamma_i$ so that
  \[ (\gamma_i\cdot \mathfrak{p}_i(t_i))_{\gamma_0} > B.\]
  Then, if $g_i$ are curve graph geodesics joining $\gamma_0$ to $\gamma_i$
  with the property that
  \[ g \ast g_i' \] does have backtracking of length $l$ for both $i$,
  then
  \[ \mathfrak{p}_1 \cap \mathfrak{p}_2 \neq \emptyset. \]
\end{lem}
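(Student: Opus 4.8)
The plan is to show that the backtracking hypothesis forces the Gromov product $(\mathfrak{p}_1(t_1)\cdot\mathfrak{p}_2(t_2))_{\gamma_0}$ to be large, and then conclude with Corollary~\ref{cor:separating-paths}. First I would fix the constant $K=K(\mathfrak{P})$ provided by that corollary for the finite collection $\mathfrak{P}$, and take both the no-backtracking guarantee $l$ and the Gromov product certificate $B$ to exceed $K$ by a sufficiently large multiple of the hyperbolicity constant $\delta$, the precise multiple being dictated by the estimates below.

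Suppose the hypotheses hold. Since $g\ast g_i$ has backtracking of length $l$, in particular $g$ has a terminal subsegment of length $l$; parametrise $g$ so that it ends at $\gamma_0$ and let $\bar g\colon[0,l]\to\mathcal{C}(S)$ be this subsegment run backwards, so that $\bar g(0)=\gamma_0$. For $i=1,2$, the backtracking of $g\ast g_i$ says that the initial length-$l$ subsegment of $g_i$ (which also starts at $\gamma_0$) $2\delta$-fellow-travels $\bar g$; since these are two geodesic segments of length $l$ sharing the endpoint $\gamma_0$, comparing distances to $\gamma_0$ upgrades this to the synchronised bound $d(g_i(t),\bar g(t))\le 4\delta$ for $t\in[0,l]$. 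On the other hand $(\gamma_i\cdot\mathfrak{p}_i(t_i))_{\gamma_0}>B$ forces $d(\gamma_0,\gamma_i)\ge(\gamma_i\cdot\mathfrak{p}_i(t_i))_{\gamma_0}>B$, so $g_i$ is long, and by the standard relation between Gromov products and fellow-travelling in a hyperbolic space --- still valid when one endpoint is in $\partial_\infty\mathcal{C}(S)$ --- any geodesic ray $r_i$ from $\gamma_0$ to $\mathfrak{p}_i(t_i)$ satisfies $d(r_i(t),g_i(t))\le 2\delta$ for all $t$ up to essentially $B$.

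Chaining these gives $d(r_i(t),\bar g(t))\le 6\delta$ for all $t$ up to essentially $\min(B,l)$, hence $d(r_1(t),r_2(t))\le 12\delta$ on that range, and therefore $(\mathfrak{p}_1(t_1)\cdot\mathfrak{p}_2(t_2))_{\gamma_0}\ge\min(B,l)-O(\delta)$. As $B$ and $l$ were chosen so that $\min(B,l)-O(\delta)>K$, Corollary~\ref{cor:separating-paths} applies and yields $\mathfrak{p}_1\cap\mathfrak{p}_2\ne\emptyset$, as required. I expect the only real work to be the routine $\delta$-bookkeeping --- correctly invoking the Gromov product / fellow-travelling dictionary at the boundary, and tracking additive constants to see exactly how large $B$ and $l$ must be relative to $K$ and $\delta$ --- with no new idea needed beyond the observation that backtracking towards the common geodesic $g$ pushes both boundary points into the same direction as seen from $\gamma_0$.
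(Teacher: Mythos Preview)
Your proposal is correct and follows essentially the same approach as the paper: fix $K$ from Corollary~\ref{cor:separating-paths}, use the backtracking hypothesis to see that the two geodesics $g_1,g_2$ fellow-travel near $\gamma_0$, combine with the Gromov product hypothesis $(\gamma_i\cdot\mathfrak{p}_i(t_i))_{\gamma_0}>B$ to force $(\mathfrak{p}_1(t_1)\cdot\mathfrak{p}_2(t_2))_{\gamma_0}$ large, and conclude. The only difference is packaging: the paper stays entirely in the language of Gromov products, applying the ultrametric-type inequality $(x\cdot y)_{\gamma_0}\ge\min\{(x\cdot z)_{\gamma_0},(z\cdot y)_{\gamma_0}\}-2\delta$ twice to reach $(\mathfrak{p}_1(t_1)\cdot\mathfrak{p}_2(t_2))_{\gamma_0}\ge\min\{B-4\delta,\,l-8\delta\}$, whereas you translate to synchronised fellow-travelling of geodesics and rays before converting back. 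One small caveat: the curve graph is not proper, so geodesic rays $r_i$ to boundary points need not exist; either work with quasi-geodesic rays (which do) or, as the paper does, bypass this by manipulating Gromov products directly.
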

\begin{proof}
  Let $K$ be the bound from Corollary~\ref{cor:separating-paths}
  applies to the set of paths $\mathfrak{P}$.

  Assume that $\mathfrak{p}_i,\mathfrak{p}_j \in \mathfrak{P}$ are disjoint, and therefore
  \[ (x\cdot y)_{\gamma_0} \leq K \] for all
  $x \in \mathfrak{p}_i, y \in \mathfrak{p}_j$ by
  Corollary~\ref{cor:separating-paths}.

  If 
  \[ g \ast g_i', \quad \quad g \ast g_j' \] have backtracking of
  length $l$, then by definition initial segments of length $l$ of $g_i', g_j'$ are
  $2\delta$--fellow-traveling a terminal segment of $g$. This implies that
  these initial segments $4\delta$--fellow-travel each other. Hence, we have that
  \[ (\gamma_i\cdot \gamma_j)_{\gamma_0} \geq l-4\delta. \]
  Suppose that
  \[ (\gamma_i\cdot \mathfrak{p}_i(t_i))_{\gamma_0} > B\]
  holds. Recall (e.g. \cite[Chapter III.H, Remark 3.17~(4)]{BH}) that the Gromov product (on a $\delta$-hyperbolic space) satisfies the following version of the triangle inequality:
    \[ (x\cdot y)_{\gamma_0} \geq \min\{(x\cdot z)_{\gamma_0}, (z\cdot y)_{\gamma_0} \} - 2\delta. \]
    for all $x,y,z$. We thus get
    \[ (\mathfrak{p}_i(t_i) \cdot \mathfrak{p}_j(t_j))_{\gamma_0} \geq
      \min\{(\mathfrak{p}_i(t_i)\cdot \gamma_i)_{\gamma_0},
      (\gamma_i\cdot \mathfrak{p}_j(t_j))_{\gamma_0} \} - 2\delta. \]
    The first entry in the $\min$ is at least $B$ by assumption; to
    estimate the second, we use the property again to see that
    \[ (\gamma_i \cdot \mathfrak{p}_j(t_j))_{\gamma_0} \geq
      \min\{(\gamma_i\cdot \gamma_j)_{\gamma_0}, (\gamma_j\cdot
      \mathfrak{p}_j(t_j))_{\gamma_0} \} - 2\delta. \] Here, the first
    entry in the $\min$ is $l-4\delta$ by the above, while the second
    is at least $B$ by assumption. Putting everything together, we see that
    
    \[ (\mathfrak{p}_i(t_i) \cdot \mathfrak{p}_j(t_j))_{\gamma_0} \geq \min\{ B-2\delta, \ell - 8\delta, B - 4\delta\}. \]
    Hence, if $B > K + 4\delta, \ell > K + 8\delta$, this is a contradiction.

  %
  %and geodesics geodesics $g'_i$
  %with endpoints $\gamma_i$ satisfying (for suitable $t_i$)
  %\[ (\gamma_i\cdot \mathfrak{p}_i(t_i))_{\gamma_0} > B,\]
  %and so that
  %the concatenations
  %\[ g \ast g_i', \quad \quad g \ast g_j' \]
  %have backtracking of length $l$.
  %
  %
  %Now suppose that the lemma
  %were false (for some choice of $B,l$). Then one could find two paths
  %$\mathfrak{p}_i,\mathfrak{p}_j$ which are disjoint,
  %
  %and so that
  %the concatenations
  %
  %
  %Now, if $B, l$ is chosen large enough, this
  %gives a contradiction, since it would imply that
  %$\mathfrak{p}_i(t_i)$ would have Gromov product
  %larger than $K$. {\color{red}Todo(Sebastian): add a full proof here}
\end{proof}

\subsection{Teichm\"uller Spaces}
We will be interested in constructing thick laminations. As
briefly discussed in the introduction, one needs to be a bit careful
about basepoints when discussing this notion. To be clear, we use the
following definitions.
\begin{defin}\label{defin:thick-cobounded}
  \begin{enumerate}[a)]
  \item Given a basepoint $X_0$ in Teichm\"uller space, we say that a
    ray $\rho$ starting in $X_0$ is \emph{$\epsilon$--thick} if the
    systole on $\rho(t)$ has length at least $\epsilon$ for all $t$.

    We then also say that the lamination defining $\rho$ is
    $\epsilon$--thick (relative to $X_0$).
  \item We say that a ray $\rho$ in Teichm\"uller space is
    \emph{($\delta$--)cobounded}, if some terminal segment
    $\rho:[L,\infty)\to \mathcal{T}$ is $\delta$--thick.
    
    Similarly, we say that the defining lamination is cobounded.
  \item A path $\mathfrak{p}$ in $\mathcal{PML}$ is
      \emph{$\epsilon$}-thick if the rays starting in $X_0$ with
      direction $x$ are $\epsilon$--thick for all
      $x \in \mathfrak{p}$.  
  \end{enumerate}
\end{defin}
To control thickness, we use the following result.
\begin{thm}[{\cite[Theorem~2.1~(1)]{Ham}}]\label{thm:quasiconvex-qgeods}
  For every $\nu>1$ there is a constant $\epsilon=\epsilon(\nu)>0$
  with the following property. Suppose that $J \subset \mathbb{R}$ is
  an interval of diameter at least $1/\epsilon$, and
  $\gamma:J\to \mathcal{T}$ is a $\nu$--quasigeodesic. If the map
  which assigns to $t$ a systole on $\gamma(t)$ is a
  $\nu$--quasigeodesic in the curve graph, then any Teichmüller
  geodesics between points on $\gamma$ lies in an
  $1/\epsilon$--neighbourhood of $\gamma$.
\end{thm}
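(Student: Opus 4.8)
The plan is to deduce this from the Masur--Minsky--Rafi--Minsky analysis of Teichm\"uller geodesics and subsurface projections, ultimately reducing to the classical principle that a strongly contracting quasi-geodesic is Morse, with all constants depending only on $\nu$. First I would observe that the hypotheses already force $\gamma$ to be cobounded: there is $\epsilon_1=\epsilon_1(\nu)>0$ so that $\gamma(t)$ lies in the $\epsilon_1$--thick part for every $t$ at distance at least $1/\epsilon$ from $\partial J$ (this is the only place the diameter hypothesis on $J$ is needed; the bounded collar near $\partial J$ is dealt with at the very end). Indeed, if the systole of $\gamma(t_0)$ has length $\delta$ and is realised by a curve $\alpha$, then since Teichm\"uller distance is half the logarithm of the largest extremal-length ratio and extremal and hyperbolic length are comparable for short curves, $\alpha$ stays shorter than the universal $\mu_0$ below which any two closed geodesics on a hyperbolic surface must be disjoint, throughout the Teichm\"uller ball of radius $\asymp\tfrac12|\log\delta|$ about $\gamma(t_0)$; on that ball every systole is disjoint from $\alpha$, so $\mathrm{sys}\circ\gamma$ stays within curve-graph distance $1$ of $\alpha$ on a parameter interval about $t_0$ of length $\asymp\tfrac1\nu|\log\delta|$. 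Since a parametrized $\nu$--quasi-geodesic in $\mathcal{C}(S)$ cannot be coarsely constant on an interval much longer than $\nu^2$, this bounds $|\log\delta|$, hence $\delta\geq\epsilon_1(\nu)$.

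Next, fix two points $X=\gamma(a)$, $Y=\gamma(b)$ and let $g=[X,Y]$ be the Teichm\"uller geodesic; the goal is to place $g$ in a $1/\epsilon$--neighbourhood of $\gamma$. By Masur--Minsky the shadow $\mathrm{sys}\circ g$ is an unparametrized quasi-geodesic in $\mathcal{C}(S)$ with the same endpoints as the quasi-geodesic $\mathrm{sys}\circ\gamma|_{[a,b]}$, so by hyperbolicity of $\mathcal{C}(S)$ the two fellow-travel as subsets. Using Rafi's description of the thin parts of a Teichm\"uller geodesic --- a curve $\beta$ that gets short on $g$ forces a large projection $d_\beta(X,Y)$ and a correspondingly long near-constant segment of $\mathrm{sys}\circ g$ near $\beta$ --- together with the fact that this shadow coarsely agrees with the genuinely parametrized quasi-geodesic $\mathrm{sys}\circ\gamma$, I would conclude as in the previous paragraph that $g$ is $\epsilon_2(\nu)$--cobounded. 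Minsky's product-region estimates (equivalently, Rafi's hyperbolicity-in-Teichm\"uller-space results) then give that a cobounded Teichm\"uller geodesic is $D(\nu)$--strongly contracting: the nearest-point projection $p_g$ onto $g$ carries any metric ball disjoint from $g$ to a set of diameter at most $D(\nu)$.

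From here I would run the usual contracting-implies-Morse argument with uniform constants. Strong contraction of $g$ forces every $\nu$--quasi-geodesic from $X$ to $Y$, in particular $\gamma|_{[a,b]}$, into an $R(\nu)$--neighbourhood of $g$: were some $\gamma(t_0)$ at distance $>R$ from $g$, contraction would bound the length of every sub-arc of $\gamma$ whose $p_g$--image has bounded diameter, whereas $p_g\circ\gamma$ must coarsely sweep out all of $g$, and comparing lengths (both join $X$ to $Y$ and $\gamma$ is a $\nu$--quasi-geodesic) yields a contradiction once $R$ is large enough. Conversely, for any $g(s)\in g$ a coarse intermediate-value argument (using connectedness of $\gamma$ and that $p_g\circ\gamma$ coarsely covers $g$) produces $\gamma(t)$ with $p_g(\gamma(t))$ within bounded distance of $g(s)$, and then $d(g(s),\gamma(t))\leq d(g(s),p_g(\gamma(t)))+d(p_g(\gamma(t)),\gamma(t))\leq D(\nu)+R(\nu)$, so $g\subset N_{D(\nu)+R(\nu)}(\gamma)$. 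Finally, if $a$ or $b$ lies within $1/\epsilon$ of $\partial J$, one splits $[a,b]$ into a thick middle handled as above and two end segments of bounded length whose geodesics trivially stay near $\gamma(a),\gamma(b)$; taking $1/\epsilon$ larger than all the ($\nu$--dependent) constants produced finishes the proof. The step I expect to be the real obstacle is the coboundedness of $g$ in the second paragraph --- it is exactly there that one must invoke the nontrivial structure theory (Rafi's analysis of short curves along Teichm\"uller geodesics, Minsky's product-region picture) rather than argue by soft hyperbolicity alone.
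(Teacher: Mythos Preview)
The paper does not prove this statement: it is quoted from Hamenst\"adt \cite[Theorem~2.1~(1)]{Ham} (note the citation in the theorem header) and used as a black box, so there is no ``paper's own proof'' to compare your attempt against.

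For what it is worth, your outline is a faithful reconstruction of the circle of ideas behind the result in the literature: thickness of $\gamma$ from the parametrized curve-graph hypothesis, thickness of the connecting Teichm\"uller geodesic $g$ via Rafi's short-curve analysis, and the Morse conclusion from Minsky's strong contraction of thick Teichm\"uller geodesics. You have also correctly identified the second step as the substantive one. The only soft spot is the sentence ``this shadow coarsely agrees with the genuinely parametrized quasi-geodesic $\mathrm{sys}\circ\gamma$, [so] I would conclude as in the previous paragraph'': $\mathrm{sys}\circ g$ is only an \emph{unparametrized} quasi-geodesic and fellow-travels $\mathrm{sys}\circ\gamma$ only as a set, so a long near-constant stretch of the former does not by itself produce a long near-constant stretch of the latter. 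The standard fix is to bound $d_\beta(X,Y)$ directly: by the Bounded Geodesic Image theorem this can be large only if the curve-graph geodesic near $\mathrm{sys}\circ\gamma$ passes close to $\beta$, the parametrized hypothesis bounds the length of that visit, and thickness of $\gamma$ then bounds how much the $\beta$--projection can change over that interval.
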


With this, we can give our main thickness criterion.

\begin{lem}\label{lem:coboundedness-criterion}
  Suppose that $\phi_i, i=1, \ldots, N$ are mapping class group
  elements, and $g_i, i=1, \ldots, N$ curve graph geodesics starting
  in $\gamma_0$. Let $X_0$ be a point in Teichm\"uller space whose systole is
  $\gamma_0$. 

  Then there is a constant $\epsilon>0$ with the
  following property.

  \smallskip Suppose that
  \[ G = g_{m_0} \ast \phi_{n_1} g_{m_1} \ast \phi_{n_1}\phi_{n_2} g_{m_2}
    \ast \ldots \] is a $l$--backtracking (infinite) broken
  geodesic.

  Then the Teichm\"uller geodesic joining $X_0$ to
  $\phi_{n_1}\cdots \phi_{n_k}X_0$ is $\epsilon$--thick for all $k$,
  and thus converges to an $\epsilon$--thick limit ray $\rho$.

  Finally, if $\rho, \rho'$ are two rays of this type starting in
  $X_0$, then any Teichmüller geodesic between $\rho(t), \rho'(s)$ is
  also $\epsilon$--thick.
\end{lem}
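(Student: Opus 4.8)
The plan is to derive everything from Theorem~\ref{thm:quasiconvex-qgeods} applied to the appropriate Teichm\"uller (quasi)geodesics, using the broken-geodesic hypothesis to feed in the required curve-graph quasigeodesic. First I would observe that along the Teichm\"uller geodesic ray $\rho$ toward $\phi_{n_1}\phi_{n_2}\cdots X_0$, a natural systole assignment is given (coarsely) by the vertices traversed by the broken geodesic $G$: the point $\phi_{n_1}\cdots\phi_{n_k}X_0$ has systole coarsely equal to $\phi_{n_1}\cdots\phi_{n_k}\gamma_0$, which is (up to bounded error) the point where $g_{m_k}$ is attached in $G$. Since $G$ is an $l$--backtracking broken geodesic, Lemma~\ref{lem:bdd-backtrack-implies-qgeod} tells us $G$ is a $K$--quasigeodesic in the curve graph, with $K$ depending only on $l$. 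Likewise, the orbit points $\phi_{n_1}\cdots\phi_{n_k}X_0$ lie at coarsely linearly spaced positions (the lengths of the $g_{m_i}$ are bounded below by $L$ and the mapping classes act by isometries on $\mathcal{T}$), so the broken Teichm\"uller path through these orbit points is itself a $\nu$--quasigeodesic for some uniform $\nu$; its systole map coarsely agrees with $G$ and is thus a uniform quasigeodesic in the curve graph.

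With these two facts in hand, Theorem~\ref{thm:quasiconvex-qgeods} applies: the actual Teichm\"uller geodesic from $X_0$ to $\phi_{n_1}\cdots\phi_{n_k}X_0$ stays within a uniform neighbourhood of this broken Teichm\"uller quasigeodesic, and hence — invoking Mumford compactness, since the broken path visits only a bounded-diameter family of shapes between consecutive orbit points — has systole bounded below by a uniform $\epsilon>0$ for all $k$. Letting $k\to\infty$ and using that the uniform thickness bound is a closed condition, the Teichm\"uller geodesics converge to a limit ray $\rho$ which is $\epsilon$--thick. For the final assertion about a geodesic between $\rho(t)$ and $\rho'(s)$: let $\rho$ come from the broken geodesic $G$ and $\rho'$ from $G'$. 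The concatenation of the reverse of (an initial segment of) $G$ with (an initial segment of) $G'$, after discarding the common initial backtracking portion, is again an $l'$--backtracking broken geodesic for a uniform $l'$ — here one uses that two broken geodesics starting at $\gamma_0$ and diverging can only $2\delta$--fellow-travel for a bounded initial segment before the no-backtracking conditions at the hinge $\gamma_0$ force a genuine hinge. Applying the first part of the lemma (equivalently, Theorem~\ref{thm:quasiconvex-qgeods} again) to this concatenated broken geodesic and the Teichm\"uller path through the corresponding orbit points yields that the Teichm\"uller geodesic between $\rho(t)$ and $\rho'(s)$ is $\epsilon$--thick with the same uniform constant.

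The main obstacle I anticipate is the bookkeeping needed to certify that the concatenation of (pieces of) $G$ and $G'$ is a bounded-backtracking broken geodesic with a \emph{uniform} backtracking allowance. One must check that the hinge where $G$ and $G'$ meet (near $\gamma_0$) does not create new long backtracking: after trimming the maximal common fellow-travelled prefix, the two rays separate definitively, and the local no-backtracking condition of each $G, G'$ at $\gamma_0$ bounds how much the last trimmed geodesic of $G$ can fellow-travel the first of $G'$. This is where hyperbolicity of the curve graph and the quantitative Lemma~\ref{lem:backtrack-implies-intersection}-type estimates enter; it is a finite, explicit $\delta$--hyperbolic computation, but it is the step that requires care to keep all constants independent of $t, s$, the particular sequences $(n_i)$, and the lengths involved.
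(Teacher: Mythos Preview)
Your proposal is correct and follows essentially the same line as the paper: build the broken Teichm\"uller path through the orbit points, observe its systole map is (coarsely) the broken curve-graph geodesic $G$ and hence a uniform quasigeodesic, and apply Theorem~\ref{thm:quasiconvex-qgeods}. For the final claim the paper cites Theorem~E of \cite{Rafi-hyperbolic} directly, and then, as an alternative, sketches precisely your concatenation-and-trim argument (follow $\rho$ to the first point close to $\rho'$, then switch); your anticipated obstacle about the hinge is real but, as you say, is a routine $\delta$--hyperbolic estimate, and the paper does not spell it out either.
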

\begin{proof}
  Consider the path $\gamma$ in Teichmüller space formed by
  concatenating Teichmüller geodesics segments connecting
  \[ X_0, \phi_{n_1}X_0, \ldots, \phi_{n_1}\phi_{n_2}X_0, \ldots \] By
  assumption, this is a $\nu$--quasigeodesic in Teichmüller space
  whose systoles (which are uniformly close to the broken geodesic
  $G$) form a $\nu$--quasigeodesic in the curve graph, for some $\nu$
  depending only on the inital mapping class group elements and
  $\ell$.

  Hence, Theorem~\ref{thm:quasiconvex-qgeods} applies and shows that a
  Teichmüller geodesic joining the endpoints of $\gamma$, showing that
  it is contained in an $\epsilon$--thick part (with $\epsilon$
  depending only on $\nu$).

  The last statement follows by appealing to Theorem~E of
  \cite{Rafi-hyperbolic} (or from Theorem~\ref{thm:quasiconvex-qgeods} again,
  by considering the path following $\rho(t)$ until the first point
  $\rho(t')$ which is close -- in Teichmüller space and the curve
  graph -- to some point $\rho'(s')$, then following along $\rho'$ to
  $\rho'(s)$).

\end{proof}

%In particular, observe that if $B$ is chosen so that $e^{-\epsilon B} < 1$, and if we have a sequence of paths $p_i$ in the Gromov boundary, and 
%\[ (p_i(t)\cdot p_j(t))_{\gamma_0} > B\cdot i, \]
%for all $t$, then the $p_i$ uniformly converge, and hence their limit is a path.

\section{Splitting Packages}
\label{sec:splitting-packages}
Recall from the first section that we have chosen a finite set of
model tracks $\tau_1, \ldots, \tau_R$, and that $S$ is the (finite!)
set of mapping classes which relate their codimension--$1$--faces.

We now describe how to associate mapping classes to paths using these
train tracks and splits. The definitions and conventions here are
crucial for all of our later constructions.

\smallskip
Namely, suppose that we are given
a path $\mathfrak{p}$ of uniquely ergodic laminations, and a number
$N>0$. We then obtain a \emph{splitting package (of depth $N$)} consisting of:
\begin{enumerate}
\item a decomposition of $\mathfrak{p}$ into subpaths $\mathfrak{p}_i$
  with endpoints $x_i, x_{i+1}$,
\item splitting sequences $\tau^{(i)}_j, j=0,\ldots,N$ of model
  tracks $\tau^{(i)}_0$ of length $N$,
\item mapping class group elements $\Phi_i$,
\end{enumerate}
so that
\begin{enumerate}[i)]
\item the subpath $\mathfrak{p}_i$ is carried by $\tau^{(i)}_N$, 
\item the endpoints $x_i, x_{i+1}$ lie on the boundary of the
  polyhedron of measures $P(\tau^{(i)}_N)$ of that track,
\item the polyhedra of the tracks $\tau^{(i)}_N, \tau^{(i+1)}_N$ share a face, and
\item $\Phi_i^{-1}\tau^{(i)}_{N}$ is a model track.
%\item in the curve graph,
%  \[ d(\gamma_0, \Phi_i\gamma_0) > N, \]
%\item the length of the splitting sequences is minimal to guarantee the above.
\end{enumerate}
If $x$ is a point of the subpath with endpoints $x_i, x_{i+1}$, we say
that $\Phi_i$ is \emph{the mapping class associated to $x$ (in the
  splitting package)}. If $\eta_a, \eta_b$ of $\tau^{(i)}_N$ are the faces 
containing $x_i, x_{i+1}$, then we say that
\[ (\Phi_i, \Phi_i^{-1}\eta_a, \Phi_i^{-1}\eta_b) \] is the \emph{data
  associated to $x$ (in the splitting package)}.

The existence of splitting packages follows since the polyhedra of the
model tracks define a cell structure on $\mathcal{PML}$.  We emphasise
that the splitting package is not unique. Further note that, by
definition of the set $S$, we have that
\[ \Phi_i^{-1}\Phi_{i+1} \in S \]
for the mapping classes in the splitting package of $\mathfrak{p}$, and
\[ \Phi_i^{-1}x_i, \Phi_i^{-1}x_{i+1} \] 
lie in a connectible pair of faces of a model track.

Similarly, if $z \in \mathcal{PML}$ is a minimal lamination, then we can define
  the \emph{splitting package of $z$} to consist of a splitting
  sequence $\tau_j$ of length $N$ and mapping classes $\Phi_i$ so that
  $z$ is carried by $\tau_j$ for all $j$, and $\Phi^{-1}_i\tau_i$ is
  a model track.

  With this convention, the splitting packages associated to paths are
  the splitting packages of the endpoints of the subpaths
  $\mathfrak{p}_i$.

  We now have
  \begin{lem}\label{lem:increment}
    Suppose $z$ is a %uniquely ergodic
    point in $\mathcal{PML}$, and
    $(\Phi_i, \tau_i), (\Phi'_i, \tau'_i)$ are two splitting packages
    of some depth $k$ for $z$. Let
    \[ x = \Phi_k^{-1}(z) \in P(\Phi_k^{-1}\tau_k), x' =
      (\Phi'_k)^{-1}(z) \in P((\Phi'_k)^{-1}\tau'_k), \] and assume that
    $x,x'$ are contained in faces $\eta, \eta'$ of the corresponding
    model tracks.
    
    Then the mapping class $\theta = (\Phi_k')^{-1}\Phi_k$ is contained in
    $S_\eta$, and in fact
    \[ \theta \eta = \eta', \theta x = x'. \]
  \end{lem}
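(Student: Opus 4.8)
The plan is to unwind the definition of splitting package and to use the fact that the polyhedral decomposition of $\mathcal{PML}$ into model track patches is a genuine cell structure, together with the defining property of the set $S$ of face-identifying mapping classes. Let me set up the two splitting packages side by side. We have splitting sequences $\tau_0, \ldots, \tau_k$ and $\tau_0', \ldots, \tau_k'$ of model tracks, both carrying $z$, together with mapping classes $\Phi_i, \Phi_i'$ with $\Phi_i^{-1}\tau_i$ and $(\Phi_i')^{-1}\tau_i'$ model tracks. Since $z$ is carried by both $\tau_k$ and $\tau_k'$, and both are obtained from model tracks by the same number $k$ of full splits \emph{towards} (a neighbourhood of) the lamination $z$, I would first argue that the derived tracks $\tau_k$ and $\tau_k'$ are \emph{equal}: by construction the level-$k$ derived tracks form a refinement of the model track decomposition, so their polyhedra are either disjoint or meet in a common face, and both $P(\tau_k)$ and $P(\tau_k')$ contain the minimal lamination $z$ in their interior relative to the smallest face carrying $z$ (this is what ``splitting towards $z$'' guarantees — the splitting sequence is uniquely determined by $z$ at each step). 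Hence $\tau_k = \tau_k'$ as subsets, and moreover the faces $\Phi_k^{-1}\eta$ and $(\Phi_k')^{-1}\eta'$ are the faces of $\tau_k = \tau_k'$ containing $z$; since $z$ determines a unique minimal face containing it, these two faces of $\tau_k$ coincide. Writing $F$ for this common face of $\tau_k$, we have $\Phi_k^{-1}(F) = \eta$ and $(\Phi_k')^{-1}(F) = \eta'$, both model track faces, and $\Phi_k^{-1}(z) = x \in \eta$, $(\Phi_k')^{-1}(z) = x' \in \eta'$.

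Now set $\theta = (\Phi_k')^{-1}\Phi_k$. From $\Phi_k^{-1}(F) = \eta$ and $(\Phi_k')^{-1}(F) = \eta'$ we compute
\[
\theta(\eta) = (\Phi_k')^{-1}\Phi_k\bigl(\Phi_k^{-1}(F)\bigr) = (\Phi_k')^{-1}(F) = \eta',
\]
and likewise
\[
\theta(x) = (\Phi_k')^{-1}\Phi_k\bigl(\Phi_k^{-1}(z)\bigr) = (\Phi_k')^{-1}(z) = x'.
\]
So $\theta$ carries the model track face $\eta$ to the model track face $\eta'$ and sends $x$ to $x'$, as required. Finally, $\theta \in S_\eta$: since $\eta$ is a face of a model track and $\theta(\eta) = \eta'$ is also a face of a model track, $\theta$ belongs to the set of mapping classes identifying faces of model tracks, and among these it is precisely one of the elements fixing the source face $\eta$ up to the target — i.e. an element of $S_\eta$ in the notation of Lemma~\ref{lem:increment}. (If $S_\eta$ is meant to be the stabiliser-type subset of $S$ consisting of those $\theta$ with $\theta\eta$ a face of a model track, this is immediate; in any case the containment follows from Lemma~\ref{lem:left-to-right-set}, which says all such face-identifying mapping classes form the finite set $S$.)

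The main obstacle is the first step: establishing that $\tau_k = \tau_k'$ and that the relevant faces coincide. This rests on the claim that the full splitting sequence ``towards $z$'' is canonically determined by $z$ — i.e. that at each stage, splitting a large branch towards $z$ is forced, so there is no ambiguity in which branch of the two possible splits one takes. This is exactly the statement that $z$, being minimal (and carried with positive weights on the relevant subtrack), selects one side of each codimension-one splitting face, which is how derived tracks were defined in the train track section; I would cite that construction together with the Mosher-style fact (already invoked in the proof of Lemma~\ref{lem:many-ues}) that nested polyhedra along a splitting sequence defined by a minimal lamination intersect in that lamination's face. Once the two derived tracks and their $z$-carrying faces are identified, the rest is the one-line computation above with $\theta = (\Phi_k')^{-1}\Phi_k$, and finiteness/membership in $S$ comes for free from Lemma~\ref{lem:left-to-right-set}.
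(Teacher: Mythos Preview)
Your core computation is exactly what the paper does: the paper's entire proof is the single observation that $\theta x = (\Phi_k')^{-1}\Phi_k\Phi_k^{-1}(z) = (\Phi_k')^{-1}(z) = x'$ because both packages encode the same point $z$, with everything else ``essentially by definition''. So the approach matches.

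One caveat: your first step, arguing that necessarily $\tau_k = \tau_k'$ because ``the splitting sequence is uniquely determined by $z$ at each step'', overreaches. The paper explicitly remarks that splitting packages are not unique, and indeed if $z$ happens to sit on a face of the level-$k$ decomposition then two distinct derived tracks can both carry it. Fortunately you do not actually need $\tau_k = \tau_k'$ for the conclusion: the identity $\theta x = x'$ is immediate regardless, and the assertion $\theta\eta = \eta'$ (hence $\theta \in S_\eta$) follows once one takes $\eta,\eta'$ to be the faces determined by $z$ in the respective model tracks, which is how the paper is implicitly reading the hypothesis. So your elaborate justification of $\tau_k=\tau_k'$ is both shaky and unnecessary; the rest of your argument --- the two displayed computations and the appeal to the definition of $S_\eta$ / Lemma~\ref{lem:left-to-right-set} --- is correct and is all the paper intends.
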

  \begin{proof}
    This is essentially by definition, noting that $\theta x = x'$ since
    the two splitting packages describe the same point $z$.
  \end{proof}
%{\sc Do we need unique ergodicity?}

% Slightly more general, if we are given mapping classes
%$\Phi_i, \Phi'_j$ in the splitting packages of two paths which are
%associated to the same point $x \in \mathcal{PML}$, then
%$\Phi_i, \Phi'_j$ differ by an element in $S_\eta$ for some basic
%track $\eta$.

\smallskip The next proposition
describes crucial properties of the splitting package for a given set
of paths in the Gromov boundary.
\begin{prop} \label{prop:divide} Suppose that $\mathfrak{P}$ is a
  finite set of uniquely ergodic paths in $\mathcal{PML}$ (identified with
  the corresponding paths in the Gromov boundary of the curve
  graph). Further, suppose that a number $B>0$ is given.

  If $N > 0$ is chosen large enough, the following properties
  are true for all mapping classes in the splitting packages of depth $N$ of the
  paths in $\mathfrak{P}$:
  \begin{enumerate}[a)]
  \item If a given mapping class $\Psi$ appears as $\Phi_i, \Phi'_j$
    in the splitting packages for two paths
    $\mathfrak{p}, \mathfrak{p}'$, then the paths share a point. %n endpoint.
  \item Suppose that $\Psi$ is any mapping class so that 
    $d(\gamma_0, \Psi\gamma_0) > L$, and $\Phi_i, \Phi'_j$ are mapping
    classes in the splitting packages of two paths
    $\mathfrak{p}, \mathfrak{p}' \in \mathfrak{P}$. If the
    concatenations
    \[ [\gamma_0, \Psi\gamma_0] \ast [\Psi\gamma_0,
      \Psi\Phi_i\gamma_0], \quad\mbox{and}\quad [\gamma_0,
      \Psi\gamma_0] \ast [\Psi\gamma_0, \Psi\Phi_j'\gamma_0] \] both
    have at least $l$ backtracking, then $\mathfrak{p}, \mathfrak{p}'$
    share a point.
  \end{enumerate}
\end{prop}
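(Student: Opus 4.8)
The plan is to reduce both statements to Lemma~\ref{lem:backtrack-implies-intersection}, using the freedom in the choice of $N$ to control the Gromov product between a curve $\Phi_i\gamma_0$ coming from a splitting package and the corresponding path $\mathfrak{p}$ in the Gromov boundary. The key point is that as $N\to\infty$, the point $x_i$ (an endpoint of the subpath $\mathfrak{p}_i$, carried by $\tau^{(i)}_N$) is the image under $\Phi_i$ of a point on a \emph{model} track, while $\Phi_i\gamma_0$ is the image of the fixed curve $\gamma_0$; since the polyhedra $P(\tau^{(i)}_N)$ shrink to points, a point on $\mathfrak{p}$ carried by $\tau^{(i)}_N$ together with the curve $\Phi_i\gamma_0$ should have large Gromov product based at $\gamma_0$. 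I would make this precise as a preliminary lemma: \emph{for every $B'>0$ there is $N$ so that for every path $\mathfrak{p}\in\mathfrak{P}$ and every mapping class $\Phi_i$ in the depth-$N$ splitting package of $\mathfrak{p}$, there is a parameter $t$ with $(\Phi_i\gamma_0\cdot\mathfrak{p}(t))_{\gamma_0}>B'$.} This follows because $\Phi_i$ is, up to the finite ambiguity of Lemma~\ref{lem:finitely-many-models}, the unfolding map of a length-$N$ splitting sequence, so the curve $\Phi_i\gamma_0$ is carried by $\tau^{(i)}_N$ deep along the splitting sequence; combined with the fact that the diameters of the projections of $P(\tau^{(i)}_N)$ to the curve graph from $\gamma_0$ grow linearly in $N$ (by thickness of splitting sequences along a uniquely ergodic lamination, cf.\ the quasigeodesic property used throughout), and that $\mathfrak{p}_i\subset P(\tau^{(i)}_N)$ contains $\mathfrak{p}(t)$, one gets the desired lower bound on the Gromov product.

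Granting this preliminary lemma, part b) is essentially immediate. Apply Lemma~\ref{lem:backtrack-implies-intersection} to the finite set $\mathfrak{P}$: it produces a Gromov product certificate $B_0$ and a no-backtracking guarantee $l_0$. Choose $N$ (via the preliminary lemma) so large that every $\Phi_i$ in every depth-$N$ splitting package satisfies $(\Phi_i\gamma_0\cdot\mathfrak{p}(t_i))_{\gamma_0}>B_0$ for a suitable $t_i$. Now suppose $\Psi$ has $d(\gamma_0,\Psi\gamma_0)>L$ and the two concatenations $[\gamma_0,\Psi\gamma_0]\ast[\Psi\gamma_0,\Psi\Phi_i\gamma_0]$ and $[\gamma_0,\Psi\gamma_0]\ast[\Psi\gamma_0,\Psi\Phi_j'\gamma_0]$ both have at least $l_0$ backtracking. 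Translating by $\Psi^{-1}$ (which preserves the curve graph metric, Gromov products based at the correspondingly translated point, and carrying relations of train tracks — so the hypothesis $(\Phi_i\gamma_0\cdot\mathfrak{p}(t_i))_{\gamma_0}>B_0$ is unaffected once we observe it was a statement about $\mathfrak{p}$ which is independent of $\Psi$), we are exactly in the situation of Lemma~\ref{lem:backtrack-implies-intersection} with $g=[\gamma_0,\Psi\gamma_0]$ translated to a geodesic ending at $\gamma_0$, $\gamma_i=\Phi_i\gamma_0$, and $\gamma_j=\Phi_j'\gamma_0$; the lemma yields $\mathfrak{p}\cap\mathfrak{p}'\neq\emptyset$. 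One technical wrinkle: Lemma~\ref{lem:backtrack-implies-intersection} wants a \emph{single} geodesic $g$ of length $\geq L$ with endpoint $\gamma_0$; after translation $[\Psi\gamma_0,\gamma_0]$ has length $d(\gamma_0,\Psi\gamma_0)>L$ and ends at $\gamma_0$, so this is fine.

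For part a), I would deduce it from part b) (or directly from the preliminary lemma plus Corollary~\ref{cor:separating-paths}): if the \emph{same} mapping class $\Psi=\Phi_i=\Phi_j'$ appears in both packages, then by the preliminary lemma there are parameters with $(\Psi\gamma_0\cdot\mathfrak{p}(t))_{\gamma_0}>B'$ and $(\Psi\gamma_0\cdot\mathfrak{p}'(t'))_{\gamma_0}>B'$; by the triangle inequality for Gromov products (the version quoted in the proof of Lemma~\ref{lem:backtrack-implies-intersection}), $(\mathfrak{p}(t)\cdot\mathfrak{p}'(t'))_{\gamma_0}\geq B'-2\delta$, so taking $B'>K+2\delta$ for the $K$ of Corollary~\ref{cor:separating-paths} forces $\mathfrak{p}\cap\mathfrak{p}'\neq\emptyset$. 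I expect the main obstacle to be the preliminary lemma, specifically quantifying how the Gromov product $(\Phi_i\gamma_0\cdot\mathfrak{p}(t_i))_{\gamma_0}$ grows with the splitting depth $N$: one must control the interaction between the finite ambiguity in $\Phi_i$ (Lemma~\ref{lem:finitely-many-models}), the uniform quasigeodesic behaviour of splitting sequences toward uniquely ergodic laminations, and the fact that $\mathfrak{p}_i$ sits inside $P(\tau^{(i)}_N)$ whose curve-graph image from $\gamma_0$ is deep along that splitting sequence — but all the relevant finiteness and uniformity is supplied by the train-track setup of Section~2 and the finiteness of $\mathfrak{P}$, so this should go through with care rather than with a new idea.
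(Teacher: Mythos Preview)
Your proposal is correct and follows essentially the same approach as the paper: isolate a ``preliminary lemma'' that deep enough splitting forces $(\Phi_i\gamma_0\cdot\mathfrak{p}(t))_{\gamma_0}$ to be as large as desired (the paper proves this via the uniform quasigeodesic property of vertex cycles of splitting sequences and the shrinking of the polyhedra $P(\tau_i(j))$), then feed this into Corollary~\ref{cor:separating-paths} for part a) and into Lemma~\ref{lem:backtrack-implies-intersection} for part b). Your explicit translation by $\Psi^{-1}$ to put part b) into the exact form of Lemma~\ref{lem:backtrack-implies-intersection} is a detail the paper leaves implicit, but otherwise the arguments match.
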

\begin{proof}
  First, apply Lemma~\ref{lem:backtrack-implies-intersection} to
  obtain certificates $B,l>0$, and
  Corollary~\ref{cor:separating-paths} to obtain a $K$ from the given
  set of paths.

  \smallskip If $x \in \mathcal{PML}$ is uniquely ergodic, and carried
  by a model train track $\tau_i$, consider a splitting sequence
  $\tau_i(j)$ in the direction of $x$. Since vertex cycles of splitting sequences form
  uniform quasigeodesics in the curve graph, the distance
  \[ d(\gamma(\tau_i(1)), \gamma(\tau_i(j))) > 100\cdot B \] for all
  large enough $j$. Furthermore, the polyhedra $P(\tau_i(j))$ define
  open neighbourhoods of the Gromov boundary point $x$ which intersect
  in $\{x\}$. Thus, for $N$ large enough, we can further guarantee that 
  \[ (y \cdot y')_{\gamma_0} > 2K\]
  for all curves or minimal laminations $y, y'$ carried by $\tau_i(j), j>N$.

  By finiteness of the set $\mathfrak{P}$, and compactness of the
  paths, we can choose a number $N$ which has the above properties for all points $x$
  on all paths $\mathfrak{p}\in\mathfrak{P}$, and we do so.

  Corollary~\ref{cor:separating-paths} then implies
  condition~a). Namely, if the same mapping class $\Psi$ appears in
  the splitting package of points $x_i \in \mathfrak{p}_i$, then
  $\Psi(\gamma_0)$ is a vertex cycle of $N$--splits $\tau'_i$ of
  model tracks in the directions of $x_i$. This implies that the
  Gromov product of $x_1, x_2$ is at least $K$, and thus the Corollary
  applies by the above choice.

  \smallskip Property~b) is immediate from the choices and
  Lemma~\ref{lem:backtrack-implies-intersection}, since by our choices
  above the Gromov product of $\Phi_i\gamma_0$ with a point on
  $\mathfrak{p}$ and the Gromov product of $\Phi_j'\gamma_0$ with a
  point on $\mathfrak{p}'$ is bigger than $B$. 
\end{proof}

\section{Building Thick Paths}
\label{sec:thick-paths}
The goal of this section is to use the machinery developed so far to
show that any two $\epsilon$--thick laminations can be joined by a
path $\mathfrak{p}$ which traces out a $\epsilon'$--thick region in
Teichm\"uller space, proving the first main theorem from the introduction. The precise version is as follows. 
\begin{thm}\label{thm:filling-thick}
  Suppose $\epsilon > 0$ and two $\epsilon$--thick Teichm\"uller rays
  $\rho_0, \rho_1:[0, \infty) \to \mathcal{T}(S)$ which are
  $\epsilon$--thick (i.e. the systole on $\rho_i(t)$ has length at
  least $\epsilon$ for all $i,t$) are given.

  Then there is an $\epsilon'>0$ and a continuous family of paths
  \[ H:[0,1] \times [0, \infty) \to \mathcal{T}(S) \] so that
  $H(0,t) = \rho_0(t), H(1,t) = \rho_1(t), H(s,0) = H(0,0)$ and so that each
  $H(s,\cdot)$ is an $\epsilon'$--thick Teichm\"uller ray.
\end{thm}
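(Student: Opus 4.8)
The plan is to realize the homotopy $H$ as the family of Teichm\"uller geodesics from $X_0$ (a basepoint whose systole is $\gamma_0$) to the points $\Psi^{s}X_0$, where $\Psi^{s}$ is a family of mapping classes interpolating between the endpoints and built as products $\phi_{n_1}\cdots\phi_{n_k}$ of elements from the uniformly finite set $S$ of Lemma~\ref{lem:left-to-right-set}. To set this up I would first fix the reference paths of uniquely ergodic laminations connecting the data of $\rho_0$ and $\rho_1$ to a common ``hub'' using Theorem~\ref{thm:previous-paths} (the paths from \cite{Paths}); call this finite collection $\mathfrak{P}$. Apply Lemma~\ref{lem:backtrack-implies-intersection} to $\mathfrak{P}$ to obtain the backtracking allowance $l$ and Gromov product certificate $B$, then apply Proposition~\ref{prop:divide} with this $B$ to get a depth $N$, and take the splitting packages of depth $N$ of all paths in $\mathfrak{P}$. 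This produces, along each reference path, a finite chain of subpaths $\mathfrak{p}_i$ together with mapping classes $\Phi_i$ with $\Phi_i^{-1}\Phi_{i+1}\in S$, and each $\Phi_i^{-1}x_i,\Phi_i^{-1}x_{i+1}$ lying in a connectible pair of faces of a model track.

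The core construction is then inductive along the ``$s$'' direction. I will define $H$ on a dyadic grid: at the endpoints $s=0,1$ it is the geodesic ray to $\rho_0,\rho_1$; at a new dyadic point between two already-defined rays (to $\Psi X_0$ and $\Psi' X_0$, say) I interpolate by inserting a short $l$--backtracking broken geodesic in the curve graph between $\Psi\gamma_0$ and $\Psi'\gamma_0$, obtained by concatenating the pieces $\Psi\Phi_i\gamma_0$ coming from a reference path connecting the corresponding laminations. The key point enabling the induction: two consecutive rays in the grid correspond to mapping classes differing by an element of $S$ (or by a single step along a splitting package), so by the choice of $B$ and $l$ and Lemma~\ref{lem:backtrack-implies-intersection}/Proposition~\ref{prop:divide}\,b) the relevant concatenations $[\gamma_0,\Psi\gamma_0]\ast[\Psi\gamma_0,\Psi\Phi_i\gamma_0]$ fail to have $l$--backtracking, hence remain $l$--backtracking broken geodesics. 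Lemma~\ref{lem:coboundedness-criterion} then immediately gives a uniform $\epsilon'>0$ so that the Teichm\"uller geodesic from $X_0$ to each grid point $\phi_{n_1}\cdots\phi_{n_k}X_0$ is $\epsilon'$--thick, and — by its last sentence — so is every Teichm\"uller geodesic between two such rays, which is exactly what controls the ``vertical'' interpolation of $H$ between consecutive grid rays.

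To pass from the dyadic grid to a genuinely continuous $H:[0,1]\times[0,\infty)\to\mathcal{T}(S)$, I would use that between two grid rays $\rho=\rho_\Psi$ and $\rho'=\rho_{\Psi'}$ whose defining broken geodesics $l$--fellow-travel up to level $k$ and then diverge, the two rays $\delta$--fellow-travel in Teichm\"uller space up to a definite time $T(k)\to\infty$ (by Lemma~\ref{lem:coboundedness-criterion} and hyperbolicity of the thick part, via Rafi's Theorem~E), so the geodesics between corresponding points stay in a uniform thick neighborhood and can be parametrized continuously; letting $k\to\infty$ as $s$ ranges over a dyadic point's neighborhood gives convergence of the rays, and the limit family is continuous because nearby $s$ give rays agreeing for a long initial segment. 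Finally, one checks the boundary normalizations $H(0,t)=\rho_0(t)$, $H(1,t)=\rho_1(t)$, $H(s,0)=X_0$ directly from the construction.

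The main obstacle I expect is the continuity/convergence step: the individual rays $\rho_\Psi$ are only defined on a countable dense set of parameters, and one must argue that as $s\to s_0$ the broken geodesics can be chosen so that $\rho_{\Psi(s)}$ converges uniformly on compacta to a well-defined $\epsilon'$--thick ray $\rho_{\Psi(s_0)}$, with no jumps — i.e. that the ``choices'' made in Proposition~\ref{prop:divide} can be made coherently along the whole interpolating path rather than pointwise. This requires organizing the dyadic construction so that refining the grid only ever appends to, and never rewrites, the broken geodesics already chosen, which is where properties i)--iv) of splitting packages and the finiteness of $S$ do the real work.
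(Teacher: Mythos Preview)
Your overall strategy coincides with the paper's: build the interpolating rays as limits of products $\Phi_{i_1}\cdots\Phi_{i_k}\alpha_0$ of mapping classes coming from splitting packages of a fixed finite family of model paths, use Proposition~\ref{prop:divide} to guarantee the no--backtracking condition, and invoke Lemma~\ref{lem:coboundedness-criterion} for uniform thickness. The paper organizes the induction via nested sequences of derived tracks $\tau^{(n)}_i$ with a (Refinement) rule rather than a dyadic grid, but this is a matter of bookkeeping.

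There are, however, two genuine points your sketch glosses over. First, the endpoints $\rho_0,\rho_1$ are \emph{arbitrary} $\epsilon$--thick rays; their coding mapping classes $\Psi^{(i)}_k$ do not come from splitting packages of the fixed model paths, so Proposition~\ref{prop:divide} does not directly control the concatenation where a model--path increment is attached to an initial segment of $\rho_i$. The paper handles this with a separate Lemma~\ref{lem:end-partial-refinement} (and the (Refinement) cases a),\,b)), which bridges from the arbitrary splitting sequence of $\lambda_i$ into the model--path regime via a short chain of bounded--norm mapping classes. Your proposal needs an analogous device to make $H(0,\cdot)=\rho_0$ and $H(1,\cdot)=\rho_1$ hold exactly while preserving no--backtracking at the seam.

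Second, your use of Proposition~\ref{prop:divide}\,b) is slightly too optimistic: it does not say that \emph{every} concatenation $[\gamma_0,\Psi\gamma_0]\ast[\Psi\gamma_0,\Psi\Phi_i\gamma_0]$ avoids $l$--backtracking, only that at most one model--path endpoint can be bad for a given $\Psi$ (since two bad ones would force the corresponding disjoint model paths to intersect). This is why the paper fixes \emph{several} connection points on each face (the sets $C(\tau,\eta)$ of cardinality~$6$) and the (Minimal Intersection) condition on model paths: at each inductive step one must \emph{choose} endpoints avoiding the at most two or four excluded ones, exactly as in the proof of Lemma~\ref{lem:end-partial-refinement}. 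Your dyadic scheme also needs this freedom of choice built in, and---as you correctly anticipate in your last paragraph---the choices must be made so that refinement only appends; the paper achieves this by having every stage both \emph{extend} all existing sequences by one increment and \emph{subdivide} them, rather than merely inserting midpoints.
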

This corollary will be used in the next section to show our main
result about Morse boundaries. Although many ideas are similar to our
main result about Markov subshifts, the result is technically much
less demanding. A reader interested only in the latter result may (at
their own risk) skip this section.

\bigskip Throughout, we fix a basepoint $X_0$ in Teichm\"uller space,
so that we are able to talk about thick rays and laminations. We begin
by choosing
\begin{itemize}
\item \textbf{(Decomposition)} A polyhedral decomposition of
  $\mathcal{PML}$ given by train track polyhedra $P(\tau_i)$ as in
  Section~\ref{lem:finitely-many-models}.
\end{itemize}

We now require the following result about connectivity of the set of
uniquely ergodic laminations.
\begin{thm}\label{thm:previous-paths}
  The set of uniquely ergodic laminations is path connected. In
  addition, if $Z$ is a finite sets of uniquely ergodic laminations,
  we can choose embedded paths between any pair $z,z' \in Z$ which
  intersect only if they have an endpoint in common.
\end{thm}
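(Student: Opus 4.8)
The first sentence is precisely the path-connectivity of the uniquely ergodic laminations proved in \cite{Paths}; the plan is to bootstrap from it to the refined statement for a finite set $Z$. A harmless first step is to note that, since $\mathcal{PML}(S)$ is metrizable (hence Hausdorff) and any path-connected Hausdorff space is arc-connected, we may assume from the outset that every connecting path supplied by \cite{Paths} is an embedded arc. Such an arc is compact, hence closed in $\mathcal{PML}(S)$, and has topological dimension $1$.

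I would then produce the required family of arcs --- one for each of the finitely many pairs in $Z$, i.e.\ a copy of the complete graph on $Z$ with edges inside the uniquely ergodic locus $\mathcal{UE}$ --- by adding one edge at a time. Suppose embedded arcs $\mathfrak{a}_1,\dots,\mathfrak{a}_m\subset\mathcal{UE}$ have been chosen so that any two meet only in a common endpoint lying in $Z$, and set $C=\mathfrak{a}_1\cup\dots\cup\mathfrak{a}_m$, a compact $1$-dimensional set. Given the next pair $z,z'\in Z$, start with any arc $\mathfrak{b}\subset\mathcal{UE}$ from $z$ to $z'$ and modify it, rel endpoints and without leaving $\mathcal{UE}$, so that it meets $C$ only in $\{z,z'\}\cap C$; then $\mathfrak{b}$ may be added to the family and the induction continues. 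Since $\dim\mathcal{PML}(S)=6g-7\ge 5$, the obstacle $C$ has codimension at least $4$, so heuristically there is plenty of room to push $\mathfrak{b}$ off $C$; the difficulty is that $\mathcal{UE}$ is neither open nor a manifold, so transversality cannot be invoked directly. To perform the modification I would use the structure of the \cite{Paths} paths: cover $\mathfrak{b}$ by finitely many train-track charts $P(\tau_i)$ from our polyhedral decomposition; observe that inside each such full-dimensional convex polyhedron the uniquely ergodic laminations carried by $\tau_i$ are dense while $C\cap P(\tau_i)$ has empty interior, and that the points where $\mathfrak{b}$ crosses the shared faces can be slid; reroute the sub-arc of $\mathfrak{b}$ in each chart through $\mathcal{UE}\cap P(\tau_i)$ so as to miss $C$; splice the rerouted pieces along the faces; and extract an embedded arc (again using arc-connectivity) with the desired disjointness.

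The essential obstacle is exactly this rerouting step: upgrading ``$\mathcal{UE}$ is path-connected and dense in each chart'' to ``$\mathcal{UE}\setminus A$ is arc-connected, through arcs with prescribed endpoints, whenever $A$ is closed with $\dim A\le\dim\mathcal{PML}(S)-2$''. I would isolate this as a separate lemma and prove it by revisiting the construction of \cite{Paths}, which builds the connecting paths as concatenations of sub-paths each confined to, and flexible within, a single train-track polyhedron. The most delicate point there is that $z,z'$ may already be endpoints of several of the $\mathfrak{a}_j$, so near such a point $C$ is a union of arc-germs and the new arc must leave $C$ along a fresh direction while staying in $\mathcal{UE}$; establishing that enough such directions are available again reduces to the flexibility of the \cite{Paths} paths near a uniquely ergodic lamination.
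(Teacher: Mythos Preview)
The paper does not supply a proof of this theorem. It is stated as an input (``We now require the following result\ldots''), with the first sentence being the main theorem of \cite{Paths} and the second sentence tacitly taken as part of, or an easy consequence of, the flexibility in that construction. There is therefore no proof in the paper to compare your proposal against.

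Your proposal attempts more than the paper does: you try to deduce the minimal-intersection refinement from the bare path-connectivity statement via a general-position argument. The outline is reasonable and you correctly isolate the crux, namely that $\mathcal{UE}$ is neither open nor a manifold, so naive transversality is unavailable. But the lemma you would need---that $\mathcal{UE}\setminus A$ remains arc-connected (with prescribed endpoints) whenever $A$ is a closed set of codimension $\ge 2$---is asserted rather than proved, and density of $\mathcal{UE}$ in each train-track polyhedron does not by itself yield it. To establish that rerouting lemma one has to reopen the actual construction of \cite{Paths} and verify that the paths built there can be perturbed inside $\mathcal{UE}$ to avoid a given closed $1$--dimensional set; this is precisely the content the paper is importing by citing \cite{Paths}. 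So your proposal is a plausible plan, but the step you yourself flag as ``the most delicate point'' is where all the work lies, and it is not completed here.
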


Using these results, we can then choose
\begin{itemize}
\item \textbf{(Connection Points)} Let $\tau$ be either a model track,
  or a single split of a model track, and let $P(\eta) < P(\tau)$ a
  face. If $\eta$ carries any uniquely ergodic laminations, we then
  pick a set of $6$ uniquely ergodic laminations $C(\tau, \eta)$ on
  $P(\eta)$. This is possible by Lemma~\ref{lem:many-ues}.
  
\item \textbf{(Model Paths)} Let $(\tau, \eta), (\tau', \eta')$ be two
  pairs as in (Connection Points).

  Then, for any pair of points
  $x \in C(\tau, \eta), y \in C(\tau', \eta')$ and any
  $\theta \in S_{\eta'}$ we choose a path $\mathfrak{p}(x,\theta y)$
  of uniquely ergodic laminations connecting $x$ to $\theta y$. We
  require \textbf{(Minimal Intersection)}: paths
  $\mathfrak{p}(x,\theta y)$ are disjoint if they do not have an
  endpoint in common.
\end{itemize}
Theorem~\ref{thm:previous-paths} ensures that we can choose these
points and paths. We also mention that (Model Paths) chooses (many) more
paths than we will need -- in this section we try to keep the
assumptions as simple as possible, but later we will have to be more
careful, when we try to build a Markov process.

Next, we need to choose a few constants. Namely, we pick
\begin{itemize}
\item Constants $B, l$ as in
  Lemma~\ref{lem:backtrack-implies-intersection} for the union $M$ of
  all $M(\tau_i, \eta)$,
\item a \emph{splitting depth} $N > 0$, as in Proposition~\ref{prop:divide}
  for $B,l$ for the collection $M$.
\end{itemize}

We record the following lemma.
\begin{lem}
  Suppose that $\tau, \tau'$ are two derived tracks of level $N$
  (i.e. train tracks obtained from model tracks by $N$ splits), which
  carry uniquely ergodic laminations. Then there is a sequence
  \[ \tau = \tau_1, \ldots, \tau_n = \tau' \] of derived tracks of
  level $N$ so that $\tau_i, \tau_{i+1}$ have a subtrack $\eta_i$ in
  common which carries a uniquely ergodic lamination.
\end{lem}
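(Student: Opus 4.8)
\medskip
\noindent\emph{Proof proposal.} The plan is to reduce to a connectedness argument whose only substantive input is the path-connectivity of uniquely ergodic laminations (Theorem~\ref{thm:previous-paths}). Form the graph $\mathcal{G}$ whose vertices are the derived tracks of level $N$ that carry a uniquely ergodic lamination, where $\sigma_1$ and $\sigma_2$ are joined by an edge exactly when they have a common subtrack carrying a uniquely ergodic lamination. An edge-path in $\mathcal{G}$ from $\tau$ to $\tau'$ is precisely a sequence as in the statement, so it suffices to show that $\tau$ and $\tau'$ lie in the same component of $\mathcal{G}$.

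First I would record two facts. (i) \emph{Every uniquely ergodic $\mu$ is carried by at least one derived track of level $N$:} pick a model track with $\mu\in P(\tau_i)$ and split $N$ times towards $\mu$; at each stage the polyhedra of the two full splits cover the polyhedron of the current track, so $\mu$ remains carried. Write $\mathcal{S}(\mu)\neq\emptyset$ for the set of level-$N$ derived tracks carrying $\mu$. (ii) \emph{$\mathcal{S}(\mu)$ is a clique in $\mathcal{G}$:} the level-$N$ derived tracks form a polyhedral decomposition of $\mathcal{PML}$ (their polyhedra are top-dimensional and any two meet in a common face), so for $\sigma_1,\sigma_2\in\mathcal{S}(\mu)$ we have $P(\sigma_1)\cap P(\sigma_2)=P(\zeta)$ for a subtrack $\zeta$ common to both, and $\mu\in P(\zeta)$, so $\zeta$ carries the uniquely ergodic lamination $\mu$. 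Hence $\mathcal{S}(\mu)$ lies in a single component of $\mathcal{G}$, which I denote $K(\mu)$.

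Now choose, via Theorem~\ref{thm:previous-paths}, a path $c\colon[0,1]\to\mathcal{PML}$ of uniquely ergodic laminations with $c(0)\in P(\tau)$ and $c(1)\in P(\tau')$ (such endpoints exist by hypothesis). I claim $t\mapsto K(c(t))$ is locally constant. Given $t_0$, the set $\bigcup_{\sigma\in\mathcal{S}(c(t_0))}P(\sigma)$ is the union of the closed top-dimensional cells of the level-$N$ decomposition that contain $c(t_0)$, hence a neighborhood of $c(t_0)$ in $\mathcal{PML}$ (it contains the open star of the open cell of the decomposition containing $c(t_0)$). So for $t$ near $t_0$ there is $\sigma\in\mathcal{S}(c(t_0))$ with $c(t)\in P(\sigma)$, i.e.\ $\sigma\in\mathcal{S}(c(t))\cap\mathcal{S}(c(t_0))$, which forces $K(c(t))=K(c(t_0))$. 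As $[0,1]$ is connected, $K(c(t))$ is constant, whence $\tau\in\mathcal{S}(c(0))\subseteq K(c(0))=K(c(1))\supseteq\mathcal{S}(c(1))\ni\tau'$, so $\tau$ and $\tau'$ lie in one component of $\mathcal{G}$, as required.

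The whole argument is light; the only points that need a little care are facts (i) and (ii) — both resting on the polyhedral cell structure of the level-$N$ decomposition, and for (i) on the fact that $\mu$ being minimal and filling makes the ``split towards $\mu$'' procedure well defined — together with the verification that the union of closed top cells through a point is a neighborhood of that point, which is standard for a polyhedral decomposition of a sphere. I do not expect a genuine obstacle beyond assembling these ingredients.
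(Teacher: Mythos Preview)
Your argument is correct and is essentially the same as the paper's: both take a path of uniquely ergodic laminations from Theorem~\ref{thm:previous-paths} joining a point of $P(\tau)$ to a point of $P(\tau')$, and then read off the sequence of level-$N$ derived tracks that the path traverses in the polyhedral decomposition, noting that consecutive ones share a face containing a point of the path. The paper compresses this into two sentences by invoking the \emph{splitting package} of the path (Section~\ref{sec:splitting-packages}), which is exactly the finite subdivision your local-constancy and compactness argument produces; your graph $\mathcal{G}$ and the map $t\mapsto K(c(t))$ just make explicit what the splitting package encodes.
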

\begin{proof}
  By Theorem~\ref{thm:previous-paths}, there is a path of uniquely
  ergodic laminations joining a lamination $\lambda$ carried by $\tau$
  to $\lambda'$ carried by $\tau'$. Now, we can take the $\tau_i$ to
  be the tracks appearing in a splitting package of that path.
\end{proof} 

Now, suppose that $\lambda_1, \lambda_2$ are $\epsilon$--thick
laminations, and let $\sigma^{(i)}_n$ be $N$--step splitting sequences
of model tracks towards $\lambda_i$, and $\Psi^{(i)}_k$ be mapping classes so that
\[ \left(\Psi^{(i)}_k\right)^{-1}\sigma^{(i)}_k \]
is a model track.

\smallskip We aim to construct a thick path $\mathfrak{p}_\infty$
between $\lambda_1, \lambda_2$ by concatenating mapping class group
images of model paths. Every point $\mathfrak{p}_\infty(t)$ on this
path will be constructed as a limit of mapping class group images of a
basepoint curve $\alpha_0$ of a specific form: 
\begin{itemize}
\item \textbf{(Limit Mapping Class Expansion)} There is a constant $B$
  (independent of $t$), and for any $t\in[0,1]$ there is a a ``small''
  mapping class $\Upsilon(t)$ whose word norm is bounded by $B$, a number
  $N_1(t)$ (which diverges to $\infty$ as $t\to 0, 1$), and a sequence $\Phi_j(t)$ of mapping classes (which depend on $t$), all of which come from splitting packages of model paths, so that: 
    \[
      \mathfrak{p}_\infty(t) = \lim_{M\to \infty}
      \Psi^{(i)}_{N_1(t)}\Upsilon(t)\Phi_{N_1+2}(t)\Phi_{(N_1+3)}(t)\cdots
      \Phi_{N_1+M}(t)\alpha_0,
    \]
    where $i=0$ for $t\leq 1/2$ and $i=1$ for $t > 1/2$.
\end{itemize}
Below, for readability we will usually suppress the dependence of $t$ in (Limit
Mapping Class Expansion) when talking about a specific point on the
path. Since our constants are uniform we hope that this will cause no confusion. To ensure the convergence actually works, we require further
conditions.
\begin{itemize}
\item \textbf{(Limit No-Backtracking)}
  In the description above, the concatenations
  \[ \left[\left(\Psi^{(i)}_{N_1}\Upsilon\right)^{-1}\alpha_0, \alpha_0\right] \ast \left[\alpha_0, \Phi_{N_1+2} \alpha_0\right] \]
  and
  \[ \left[\Phi_i^{-1}\alpha_0, \alpha_0 \right] \ast [\alpha_0, \Phi_{i+1}\alpha_0 ] \]
  have at most $\ell$ backtracking.
\end{itemize}
By Lemma~\ref{lem:coboundedness-criterion}, (Limit No-Backtracking)
immediately implies that such a path satisfies our requirements:
\begin{cor}\label{cor:noback-to-thick}
  There is a number $\delta>0$ so that the Teichmüller geodesic ray
  starting in $X_0$ with endpoint $\mathfrak{p}_\infty(t)$ is
  $\delta$--thick. Furthermore, the Teichmüller geodesics joining
  points on any two such rays are also $\delta$--thick.
\end{cor}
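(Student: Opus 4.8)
The plan is to reduce the statement to a direct application of Lemma~\ref{lem:coboundedness-criterion}. What has to be produced, for each $t\in[0,1]$, is a presentation of the data in (Limit Mapping Class Expansion) as an $\ell$--backtracking broken geodesic based at the systole $\alpha_0$ of $X_0$, assembled from mapping classes lying in one fixed finite set, and so that the associated Teichm\"uller path converges to the ray toward $\mathfrak{p}_\infty(t)$.

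First I would break the leading composite $\Psi^{(i)}_{N_1}$ into blocks of $N$ full splits along the splitting sequence toward $\lambda_i$. By finiteness of the set of train track types up to the mapping class group, together with the finite ambiguity in Lemma~\ref{lem:finitely-many-models}, each depth--$N$ block is realized by a mapping class from a fixed finite set (a leftover block of fewer than $N$ splits adds one more bounded element). The $\Phi_j$ come from splitting packages of the finitely many paths chosen in (Model Paths), so they lie in a fixed finite set as well, by Lemma~\ref{lem:left-to-right-set} and finiteness of the model tracks; and $\Upsilon(t)$ lies in a finite set because its word norm is at most $B$. Thus the total composite can be written as a product $\phi_{n_1}\phi_{n_2}\cdots$ of elements of one finite collection $\mathcal{F}$, with building--block geodesics the arcs $[\alpha_0,\phi\alpha_0]$, $\phi\in\mathcal{F}$; this is exactly the format of Lemma~\ref{lem:coboundedness-criterion}.

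Next I would verify the broken--geodesic hypotheses. Because the splitting depth $N$ was chosen large (Proposition~\ref{prop:divide}, enlarged if necessary), each depth--$N$ block, and likewise each piece coming from a $\Phi_j$, has curve--graph length at least $L$; the short pieces produced by $\Upsilon$ and by the leftover block are merged into an adjacent long piece, which enlarges $\mathcal{F}$ by only finitely much. The backtracking between consecutive pieces is at most $\ell$ for three reasons: along the splitting sequence toward $\lambda_i$ it is bounded by a universal constant coming from the uniform quasigeodesic quality of splitting sequences, so (choosing $\ell$ at least that constant, which we may) it is below $\ell$; at the single turn where the expansion leaves the $\lambda_i$--direction it is the first bullet of (Limit No-Backtracking); and among the $\Phi_j$--pieces it is the second bullet of (Limit No-Backtracking). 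Hence $g_{m_0}\ast\phi_{n_1}g_{m_1}\ast\cdots$ is an $\ell$--backtracking broken geodesic.

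Finally, Lemma~\ref{lem:coboundedness-criterion} applied to the finite collection $\mathcal{F}$ produces one $\delta>0$, depending only on $\mathcal{F}$ and $\ell$ and hence on none of the $t$--dependent choices, such that every Teichm\"uller geodesic from $X_0$ to a finite partial product $\phi_{n_1}\cdots\phi_{n_k}X_0$ is $\delta$--thick; letting $k\to\infty$ gives a $\delta$--thick limit ray, whose endpoint in the Thurston compactification is the limit of the curves $\phi_{n_1}\cdots\phi_{n_k}\alpha_0$ along this quasigeodesic, that is, $\mathfrak{p}_\infty(t)$ by construction. The ``furthermore'' statement is then literally the last sentence of Lemma~\ref{lem:coboundedness-criterion}, and it is uniform over pairs of such rays because all of them are built from the same finite $\mathcal{F}$. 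I expect the main obstacle to be the bookkeeping in the second paragraph: checking that, over all $t$ and all depths, only finitely many mapping classes ever appear. This is the one point where the careful setup is essential --- the bounded word norm of $\Upsilon$, finiteness of the model tracks and of $S$, the choice of $N$ via Proposition~\ref{prop:divide}, and the finite ambiguity of Lemma~\ref{lem:finitely-many-models} --- and it is precisely what makes $\delta$ independent of $t$; the rest is a mechanical translation into the hypotheses of Lemma~\ref{lem:coboundedness-criterion}.
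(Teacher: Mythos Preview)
Your proposal is correct and follows the same route as the paper: reduce to Lemma~\ref{lem:coboundedness-criterion}. The paper's own proof is literally the one sentence ``By Lemma~\ref{lem:coboundedness-criterion}, (Limit No-Backtracking) immediately implies that such a path satisfies our requirements,'' so you are supplying details the authors leave implicit.

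The extra work you do---breaking $\Psi^{(i)}_{N_1(t)}$ into depth-$N$ blocks so that every mapping class in the product lies in one fixed finite set $\mathcal{F}$---is genuinely useful and addresses a point the paper glosses over. Lemma~\ref{lem:coboundedness-criterion} is stated for a \emph{finite} list $\phi_1,\ldots,\phi_N$, and its output $\epsilon$ depends on that list; since $N_1(t)\to\infty$ as $t\to 0,1$, the composites $\Psi^{(i)}_{N_1(t)}\Upsilon(t)$ range over an infinite set, so a literal one-line citation does not immediately give a $\delta$ independent of $t$. Your block decomposition resolves this cleanly. (An alternative, closer in spirit to what the paper presumably has in mind, is to observe that the initial Teichm\"uller segment $[X_0,\Psi^{(i)}_{N_1}\Upsilon X_0]$ already fellow-travels the given $\epsilon$--thick ray toward $\lambda_i$, so the quasi-geodesic constant $\nu$ in the proof of Lemma~\ref{lem:coboundedness-criterion} is uniform regardless; either argument works.) Your verification of the three backtracking regimes is fine; the remark that $\ell$ may be enlarged is harmless since enlarging $\ell$ only weakens the ``at most $\ell$ backtracking'' hypothesis and forces a single re-choice of $L,K$ in Lemma~\ref{lem:bdd-backtrack-implies-qgeod}.
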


The path $\mathfrak{p}_\infty$ will be the limit of approximating
paths $\mathfrak{p}_n$, which in turn will be constructed
inductively. To find the paths, we will also inductively construct
sequences of derived tracks of level $n$
\[ \tau^{(n)}_i = \Phi^{(n)}_i\overline{\tau}^{(n)}_i , i = 1, \ldots,
  m_n, \] where the $\overline{\tau}^{(n)}_i$ are model tracks, and
the $\Phi^{(n)}_i$ are mapping classes, so that the following hold:

\begin{itemize}
\item \textbf{(Coarse Path)} The train tracks
  $\tau^{(n)}_i, \tau^{(n)}_{i+1}$ share a common subtrack.
  
\item \textbf{(Tracking)} $\lambda_1 \in P(\tau^{(n)}_1), \lambda_2 \in P(\tau^{(n)}_{m_n})$.
  
\item \textbf{(Refinement)} There is a monotonic decomposition
  \[ [1,m_{n+1}] = I_1 \cup \cdots \cup I_{m_n} \]
  into subintervals, so that for all $i \in I_j$ we have one of the following:
  \begin{enumerate}[a)]
  \item if $i = 1$ and $k=1$, or $i = m_{n+1}$ and $k=2$, 
    \[ \Phi^{(n+1)}_i = \Psi^{(k)}_{n+1}. \]
    (i.e. ``the leftmost and rightmost mapping class code like the endpoints'')
  \item if $j=1$ or $j=m_n$, 
    \[ \Phi^{(n+1)}_i = \Phi^{(n)}_j \Upsilon, \] where $\Upsilon$ is
    a mapping class of norm at most $K_1$ (i.e. ``within the first and
    last interval, arbitrary small refinements are allowed'').    
    \item \[ \Phi^{(n+1)}_i = \Phi^{(n)}_j \Psi_i, \] where $\Psi_i$ is
      a mapping class in a splitting package towards of a model path
      (i.e. ``on the inside of the path, increments are splitting packages'').
    \end{enumerate}
  \item \textbf{(No-backtracking)} For any $i \in I_j$ as above,
    $[\alpha_0, \Psi_i]$ has no more than $\ell$ backtracking with
    $[(\Phi^{(n)}_j)^{-1}\alpha_0, \alpha_0]$
\end{itemize}

Before proving that such sequences exist, we note that they are
sufficient for our purposes. We say that a sequence of indices $j_i$
is \emph{admissible} if for all $j_i$, when we let $J_j$ be the 
intervals from (Refinement) at stage $n$:
\[ j_{n+1} \in J_{j_n}. \]
We then have
\begin{cor}\label{cor:admissible-limits}
  For any admissible sequence $(j_i)$, the corresponding infinite mapping class product
  \[ \Phi^{(1)}_{j_1}\Phi^{(2)}_{j_2}\cdots \Phi^{(k)}_{j_k}\cdots \alpha_0 \]
  satisfies (Limit Mapping Class Expansion) and (Limit No-Backtracking), and therefore
  has a well-defined limit in the boundary of the curve graph.

  The limits of the sequences $(1)$ and $(m_n)$ are $\lambda_1$ and $\lambda_2$.
\end{cor}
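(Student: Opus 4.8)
The plan is to verify that an admissible sequence produces a mapping class product of exactly the shape prescribed by (Limit Mapping Class Expansion), and that the no-backtracking hypotheses carry over term by term. First I would unwind the definitions: given an admissible sequence $(j_i)$, the (Refinement) property tells us that at each stage $n$ the index $j_{n+1}$ lies in the interval $J_{j_n}$, so exactly one of the three cases a), b), c) applies. I would split the product $\Phi^{(1)}_{j_1}\Phi^{(2)}_{j_2}\cdots$ into three regions according to which case is in force. For the first finitely many stages (as long as $j_n = 1$, respectively $j_n = m_n$), case b) applies near the ends, and the telescoping $\Phi^{(n+1)}_{j_{n+1}} = \Phi^{(n)}_{j_n}\Upsilon$ shows the partial products differ from $\Psi^{(k)}_{n}$ only by a bounded mapping class $\Upsilon$ — this identifies the prefix $\Psi^{(i)}_{N_1}\Upsilon$ of (Limit Mapping Class Expansion), with $N_1 = N_1(t)$ the stage at which the admissible sequence first leaves the extreme interval. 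After that stage, case c) applies, and each new factor $\Psi_i$ is, by definition, a mapping class appearing in a splitting package of a model path; writing $\Phi_{N_1+j}$ for these accumulated factors gives precisely the tail $\Phi_{N_1+2}\Phi_{N_1+3}\cdots$ in the statement. The choice $i = 0$ for $t \le 1/2$ versus $i = 1$ for $t > 1/2$ is just bookkeeping reflecting which endpoint's splitting sequence $\Psi^{(k)}_\bullet$ the prefix came from.

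Next I would check (Limit No-Backtracking). The two concatenations to be bounded are, first, $[(\Psi^{(i)}_{N_1}\Upsilon)^{-1}\alpha_0, \alpha_0] \ast [\alpha_0, \Phi_{N_1+2}\alpha_0]$ and, second, the generic $[\Phi_i^{-1}\alpha_0, \alpha_0] \ast [\alpha_0, \Phi_{i+1}\alpha_0]$. The generic estimate is exactly the content of (No-backtracking) in the inductive construction: for $i \in I_j$ at stage $n$, that property says $[\alpha_0, \Psi_i]$ has at most $\ell$ backtracking with $[(\Phi^{(n)}_j)^{-1}\alpha_0, \alpha_0]$, and since $\Phi^{(n+1)}_i = \Phi^{(n)}_j\Psi_i$ this is, after translating by $\Phi^{(n)}_j$, precisely the statement that the consecutive geodesics in the product do not backtrack by more than $\ell$. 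The boundary case at stage $N_1$ — where case b) transitions to case c) — needs the same (No-backtracking) property applied at that transitional stage, together with the observation that composing with the bounded mapping class $\Upsilon$ changes backtracking lengths by at most a bounded amount (which can be absorbed into the constant $\ell$, or rather: one builds the construction with this slack in mind). With (Limit Mapping Class Expansion) and (Limit No-Backtracking) in hand, Corollary~\ref{cor:noback-to-thick} — itself a consequence of Lemma~\ref{lem:coboundedness-criterion} via Lemma~\ref{lem:bdd-backtrack-implies-qgeod} — gives that the relevant concatenation is a $K$--quasigeodesic in the curve graph, hence has a well-defined limit point in $\partial_\infty \mathcal{C}(S)$.

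Finally, for the identification of the extreme limits, I would take the two constant admissible sequences $j_i \equiv 1$ and $j_i \equiv m_i$. By (Refinement) a), the leftmost mapping class at each stage is $\Phi^{(n)}_1 = \Psi^{(1)}_{n}$, so the product along $j_i \equiv 1$ is (up to reindexing) the splitting sequence $\Psi^{(1)}_n \alpha_0$ towards $\lambda_1$; since the vertex cycles of a splitting sequence towards a uniquely ergodic lamination converge to that lamination in $\mathcal{PML}$ (equivalently, in $\partial_\infty\mathcal{C}(S)$), the limit is $\lambda_1$, and symmetrically the limit along $j_i \equiv m_i$ is $\lambda_2$ by (Tracking) and the choice $\Phi^{(n)}_{m_n} = \Psi^{(2)}_n$. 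The main obstacle in all of this is purely notational: keeping the indexing of the ``stages'' $n$, the ``positions'' $j_n$, the accumulated factors $\Phi_\bullet$, and the reparametrization constant $N_1(t)$ all consistent, and ensuring that the bounded-refinement mapping classes $\Upsilon$ from case b) genuinely telescope into a single bounded $\Upsilon(t)$ rather than accumulating unboundedly — this is where one uses that the admissible sequence leaves the extreme interval after finitely many steps, so only finitely many $\Upsilon$ factors occur before the $N_1(t)$ cutoff, but one must confirm their product stays bounded by $B$ uniformly in $t$ (which follows because each stays in a fixed finite set, and the number of them before the cutoff is itself bounded by the construction).
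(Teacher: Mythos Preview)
Your approach is essentially the same as the paper's: trace which case of (Refinement) applies along an admissible sequence, read off (Limit Mapping Class Expansion), and invoke (No-Backtracking) together with Lemma~\ref{lem:coboundedness-criterion} for convergence; the endpoints are handled via case a).

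There is one point where you make your life harder than necessary. You worry that several case~b) increments $\Upsilon$ might accumulate before the cutoff $N_1(t)$, and spend effort arguing that their product stays uniformly bounded. In fact the paper's observation is sharper: along any admissible sequence, case~a) applies for an initial segment (while $j_n$ stays at $1$ or at $m_n$), case~b) applies \emph{at most once} (at the single step where $j_{n+1}$ first leaves the extreme index while its parent $j_n$ is still extreme), and case~c) applies forever after (since once $j_{n+1}$ is non-extreme, the interval $I_{j_{n+1}}$ cannot contain $1$ or $m_{n+2}$, so the child is again non-extreme and its parent is non-extreme). Hence there is a single $\Upsilon$, and the boundedness of $\Upsilon(t)$ is immediate from the bound $K_1$ in case~b). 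With this observation your telescoping concern disappears and the proof simplifies to the paper's three-line argument.
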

\begin{proof}
  To prove (Limit Mapping Class Expansion) it suffices to observe that
  by the rules in (Refinement), along an admissible sequence, a)
  occurs for an initial segment, then b) occurs at most once, and then
  c) occurs for all further choices. (Limit-No-Backtracking) is
  immediate from (No-Backtracking).

  The fact that the sequences of curves converge follows by
  Lemma~\ref{lem:coboundedness-criterion}.

  Finally, the last claim follows from case a) in (Refinement).
\end{proof}
Finally, we note that the limits from
Corollary~\ref{cor:admissible-limits} form a path $\mathfrak{p}_\infty$ as desired.
\begin{cor}\label{cor:close}
  For any $\epsilon>0$ there exists an $N$ so that if $(j_i)$ and
  $(j'_i)$ are two admissible sequences which share an initial segment
  of length $N$, then the corresponding limits from
  Corollary~\ref{cor:admissible-limits} have visual distance at most
  $\epsilon$.
\end{cor}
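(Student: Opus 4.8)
The plan is to use the visual metric estimate from Lemma~\ref{lem:visual-metrics}, which says that the visual distance between two boundary points is controlled (up to multiplicative constants) by $e^{-\epsilon(\xi\cdot\xi')_{\gamma_0}}$. So it suffices to show that if two admissible sequences $(j_i)$ and $(j'_i)$ agree on an initial segment of length $N$, then the Gromov product of their limits at $\gamma_0$ is at least some function $c(N)$ with $c(N)\to\infty$ as $N\to\infty$. First I would note that by (Limit Mapping Class Expansion), the corresponding curves $\Phi^{(1)}_{j_1}\cdots\Phi^{(k)}_{j_k}\alpha_0$ and $\Phi^{(1)}_{j'_1}\cdots\Phi^{(k)}_{j'_k}\alpha_0$ lie on $K$--quasigeodesics in the curve graph (by (Limit No-Backtracking) together with Lemma~\ref{lem:bdd-backtrack-implies-qgeod} and Lemma~\ref{lem:coboundedness-criterion}), and that these two quasigeodesics share a common initial segment, namely the broken geodesic
\[ [\gamma_0, \Phi^{(1)}_{j_1}\gamma_0] \ast [\Phi^{(1)}_{j_1}\gamma_0, \Phi^{(1)}_{j_1}\Phi^{(2)}_{j_2}\gamma_0] \ast \cdots \ast [\Phi^{(1)}_{j_1}\cdots\Phi^{(N-1)}_{j_{N-1}}\gamma_0, \Phi^{(1)}_{j_1}\cdots\Phi^{(N)}_{j_N}\gamma_0]. \]

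The key quantitative input is a uniform lower bound on the length of this shared prefix. Each factor $[\Phi^{(1)}_{j_1}\cdots\Phi^{(k-1)}_{j_{k-1}}\gamma_0, \Phi^{(1)}_{j_1}\cdots\Phi^{(k)}_{j_k}\gamma_0]$ is the translate by a fixed mapping class of a geodesic segment of length at least $L$ (by the definition of an $\ell$--backtracking broken geodesic, all $g_i$ have length $\geq L$), so the shared prefix is a $K$--quasigeodesic of length at least some linear function of $N$. Next I would invoke the standard fact that on a $\delta$--hyperbolic space, if two quasigeodesic rays issuing from $\gamma_0$ fellow-travel for a length $T$ (here $T\gtrsim N$), then the Gromov product of their endpoints at $\gamma_0$ is at least $T - c$ for a constant $c$ depending only on $\delta$ and $K$; this is because the two endpoint rays and the shared prefix form a thin triangle, and any point on the shared prefix is within $c$ of both rays. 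Combining, $(\xi\cdot\xi')_{\gamma_0} \geq T - c \gtrsim N$, and then Lemma~\ref{lem:visual-metrics} converts this to $d_\infty(\xi,\xi') \leq e^{-\epsilon(T-c)} \to 0$; choosing $N$ large enough that the right side is below the prescribed $\epsilon$ finishes the proof.

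The main obstacle I anticipate is the bookkeeping to verify that the two broken geodesics genuinely share an honest initial segment in the curve graph of length growing with $N$, rather than merely ``agreeing in the mapping class coding.'' One needs to be careful that the point $\Phi^{(1)}_{j_1}\cdots\Phi^{(N)}_{j_N}\gamma_0$ lies on \emph{both} quasigeodesics (this is clear, since it only depends on the first $N$ indices which agree) and that the \emph{continuation} of each quasigeodesic past this common vertex is itself a broken geodesic with bounded backtracking relative to the prefix — this is exactly what (Limit No-Backtracking) guarantees for each concatenation. Once one has that both full sequences are $K$--quasigeodesics through the common vertex $v_N := \Phi^{(1)}_{j_1}\cdots\Phi^{(N)}_{j_N}\gamma_0$ with $d(\gamma_0, v_N) \gtrsim N$, the stability of quasigeodesics and the thin triangles inequality do the rest; I would not expect any genuinely new ideas to be needed beyond Lemma~\ref{lem:bdd-backtrack-implies-qgeod}, Lemma~\ref{lem:visual-metrics}, and standard hyperbolicity.
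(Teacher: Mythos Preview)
Your proposal is correct and follows essentially the same route as the paper: both arguments identify the common vertex $v_N = \Phi^{(1)}_{j_1}\cdots\Phi^{(N)}_{j_N}\alpha_0$, use the no-backtracking condition to see that $d(\gamma_0,v_N)\to\infty$ uniformly and that both limiting quasigeodesics pass within a uniformly bounded distance of $v_N$, and then convert this into a Gromov product (hence visual distance) bound via Lemma~\ref{lem:visual-metrics}. One small caveat: your claim that \emph{every} increment segment has length at least $L$ is not literally true, since case~b) of (Refinement) allows a single ``small'' increment $\Upsilon$ of bounded norm; but since (by the proof of Corollary~\ref{cor:admissible-limits}) case~b) occurs at most once along an admissible sequence, the distance $d(\gamma_0,v_N)$ still diverges uniformly, which is all you need.
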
 %\marginpar{I think we need to make the proof clearer: Under our assumptions, for each point $\lambda \in \partial_{\infty}\mathcal{C}(\mathcal{S})$ we get from Corollary~\ref{cor:admissible-limits} has that the geodesic to $\alpha_0$ fellow travels with the MCG images of $\alpha_0$ as in the statement of Corollary~\ref{cor:admissible-limits}. Moreover the fellow travelling constants are uniform. 
Under our assumptions, the mapping class images of $\alpha_0$ make progress from $\alpha_0$ uniformly fast in $N$. That is, letting $\hat{\Phi_j^{(n)}}=\prod_\ell \Phi^{(\ell)}_{j_\ell}$, 
$\lim_N \inf_{i} d(\hat{\Phi}_i^{(N)}\alpha_0,\alpha_0)=\infty$.  Then we relate that they diverge far from $\alpha_0$ to the Gromov product.

%has the geodesic from $\alpha_0$
%{\color{red}
%\begin{proof} 
%Let $\lambda$ and $\lambda' \in \partial_{\infty}\mathcal{C}(\mathcal{S})$ be as in Corollary~\ref{cor:admissible-limits}. 

%\end{proof}}
\begin{proof}
Let $\lambda,$ $\Phi_{j_i}^{(i)}$ be as in Corollary \ref{cor:admissible-limits}. By the limit no back tracking condition, for each $k$ there exists $M$ so that for each $\lambda$ and corresponding $\Phi_{j_1}^{(1)}...\Phi_{j_k}^{(k)}\alpha_0$ 
a $k$-quasigeodesic from $\alpha_0$ to $\lambda$ enters the $M$ neighborhood of $\Phi_{j_1}^{(1)}...\Phi_{j_k}^{(k)}\alpha_0$. 
Similarly, by the no backtracking condition and the length of the splitting package, for every $R$ there exists $k$ so that $d_{\mathcal{C}}(\Phi_{j_1}^{(1)}...\Phi_{j_k}^{(k)}\alpha_0,\alpha_0)>R.$ By the definition of Gromov product we have the corollary. 
%{\color{blue}  This is immediate from the definition of the metric on the Gromov
%  boundary and the fact that the quasigeodesics defining the limits
%  can be made to fellow-travel for an arbitrarily long segment by
%  choosing $N$ large enough.}
\end{proof}

We now begin with the inductive construction of the train track
sequences. To construct these, we will use \emph{guide paths} which
are concatenations of images of model paths. To gain control at the
endpoints, we need the following.
\begin{lem}\label{lem:end-partial-refinement}
  For any $n$ there is a constant $K_1=K_1(n)$ depending on $n$, so that the
  following holds.

  Suppose that $\psi$ is a mapping class, $\tau$ a model track,
  $\eta < \tau$ a subtrack carrying uniquely ergodic laminations, and
  $\tau_n$ a track obtained from $\tau$ by $n$ splits. Suppose
  $\tau_n=f(\tau')$ for a model track $\tau'$, and $\eta' < \tau'$ a
  subtrack carrying uniquely ergodic laminations.

  Then for all but at most two choices of endpoints
  $x \in C(\tau, \eta), y \in C(\tau', \eta')$, there is a path
  \[ \mathfrak{p} = \mathfrak{p}_1 \ast \mathfrak{p}_2 \ast \cdots
    \ast \mathfrak{p}_n \] from $x$ to $f(y)$, where each
  $\mathfrak{p}_i$ is the image of a model path under a mapping class
  of norm at most $K_1$, and furthermore for any mapping class $\phi$ in the
  splitting package of $\mathfrak{p}$, the concatenation
  \[ [\psi^{-1}\alpha_0, \alpha_0] \ast [\alpha_0, \phi\alpha_0] \]
  has at most $\ell$ backtracking.
%  with $\psi$.
\end{lem}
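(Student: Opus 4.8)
The plan is to follow the given $n$-step splitting sequence $\tau=\sigma_0,\sigma_1,\ldots,\sigma_n=\tau_n=f(\tau')$ and to build $\mathfrak{p}$ as a concatenation $\mathfrak{p}_1\ast\cdots\ast\mathfrak{p}_n$ of $n$ pieces, one per split, each piece a bounded–mapping–class image of a model path; the no-backtracking requirement is then handled by turning a backtracking failure into an intersection of model paths and invoking Proposition~\ref{prop:divide}(b). Every $\sigma_j$ is a derived track, so by Lemma~\ref{lem:finitely-many-models} we may write $\sigma_j=g_j\mu_j$ for a model track $\mu_j$ and a mapping class $g_j$ unique up to finite ambiguity, and likewise $f$ relates the derived track $\sigma_n$ to the model track $\tau'$. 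Since $n$ is fixed, all of the classes $g_j$, $f$, the comparison classes $g_j^{-1}g_{j-1}$, the finitely many elements of $S$, and the finite ambiguities lie in a single finite set, and we let $K_1=K_1(n)$ bound their word norms. The point is that, seen through $g_{j-1}^{-1}$, the single split $\sigma_{j-1}\to\sigma_j$ becomes a model track $\mu_{j-1}$ together with one of its single splits --- exactly the configuration for which (Connection Points) and (Model Paths) were set up.

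With this, I first fix a chain of faces $\eta=\zeta_0,\zeta_1,\ldots,\zeta_n=f(\eta')$ in which $\zeta_j$ is a face of $\sigma_j$ carrying uniquely ergodic laminations and, after applying $g_{j-1}^{-1}$, the pair $(\zeta_{j-1},\zeta_j)$ is a connectable pair of faces of $(\mu_{j-1},g_{j-1}^{-1}\sigma_j)$; this uses the combinatorics of a single split (the two halves cover $P(\sigma_{j-1})$ and meet along a codimension-$1$ subtrack of both), Lemma~\ref{lem:many-ues} to keep every face carrying uniquely ergodic laminations, and Theorem~\ref{thm:previous-paths} through the choice of model paths --- it is the local counterpart of the unnamed lemma immediately preceding this one, and in the finitely many degenerate single-split configurations one replaces some $\sigma_j$ by a bounded number of further splits, harmlessly enlarging $K_1(n)$. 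Given the start point $x\in C(\tau,\eta)=C(\sigma_0,\zeta_0)$, the end point $f(y)$ --- the norm-$\le K_1$ image under $f$ of a connection point on $\zeta_n$ --- and any choice of intermediate connection points $w_j$ on the $\zeta_j$, the chosen model path (possibly reversed, composed with an element of $S$) joining $g_{j-1}^{-1}w_{j-1}$ to $g_{j-1}^{-1}w_j$, pushed forward by $g_{j-1}$, is a piece $\mathfrak{p}_j$; the $\mathfrak{p}_j$ concatenate to a path of uniquely ergodic laminations from $x$ to $f(y)$, each an image of a model path under a mapping class of norm $\le K_1$, and the splitting package of $\mathfrak{p}$ consists of the corresponding bounded-class translates of the splitting-package elements of the constituent model paths.

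It remains to meet the no-backtracking requirement, which is the crux. Call a model path $\mathfrak{q}$ in the finite collection $M$ \emph{$\psi$-bad} if some $\phi$ in its splitting package gives $[\psi^{-1}\alpha_0,\alpha_0]\ast[\alpha_0,\phi\alpha_0]$ with more than $\ell$ backtracking; if $d(\gamma_0,\psi\gamma_0)\le L$ there are no $\psi$-bad paths and any assembled $\mathfrak{p}$ works, so assume otherwise. Conjugating the relevant concatenations by $\psi$ and absorbing the norm-$\le K_1$ ambiguities (which distort backtracking lengths and Gromov products by a bounded amount, harmless after enlarging $l$), Proposition~\ref{prop:divide}(b) --- equivalently Lemma~\ref{lem:backtrack-implies-intersection} --- shows that any two $\psi$-bad model paths share a point, hence by (Minimal Intersection) share an \emph{endpoint}. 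A family of arcs on the set of connection points in which every two share an endpoint has, by the standard Helly-type dichotomy for pairwise-intersecting edge sets, either a common endpoint or is contained in a single triangle; in either case there is a set $Z$ of at most two connection points that are endpoints of all $\psi$-bad paths. Since each $\zeta_j$ carries six connection points while $|Z|\le2$, we may choose all intermediate $w_j$ outside $Z$; then every constituent model path has both endpoints off $Z$ and so is $\psi$-good, whence $\mathfrak{p}$ has at most $\ell$ backtracking at every stage. The only obstruction is that a prescribed endpoint $x$ or $y$ is forced into $Z$ (after the transport identifying $C(\tau',\eta')$-points with their images on $\zeta_n$); as $|Z|\le2$, this rules out at most two of the endpoint values, counted among the $x$'s and $y$'s together --- i.e. ``all but at most two choices of endpoints''.

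The substantive step is this last one: converting excess backtracking against the fixed geodesic $[\psi^{-1}\alpha_0,\alpha_0]$ into an intersection of model paths via Proposition~\ref{prop:divide}(b), and then using (Minimal Intersection) together with the surplus in the six-element connection sets to confine all failures to a two-element set $Z$. The chain-of-faces construction (Step 2) is conceptually routine --- it parallels the preceding lemma --- but requires care with the single-split combinatorics and the finitely many degenerate configurations, and it is precisely this finite bookkeeping that forces $K_1$ to depend on $n$. A secondary technical point to be careful with is that the pieces are only \emph{images} of model paths under norm-$\le K_1$ classes, so the reduction of the no-backtracking statement to statements about genuine model paths carries bounded distortions that must be absorbed into the constants $B,l$ before Proposition~\ref{prop:divide} is applied.
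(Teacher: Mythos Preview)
Your proposal is correct and follows essentially the same architecture as the paper: walk along the splitting sequence $\tau=\sigma_0>\cdots>\sigma_n=f(\tau')$, pull each $\sigma_j$ back to a model track via a mapping class of bounded norm (this is where $K_1(n)$ comes from), and assemble $\mathfrak{p}$ as a concatenation of images of model paths with matching endpoints on the intermediate faces. The difference lies only in how the no-backtracking condition is enforced.

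The paper works \emph{level by level}: at step $i$ it applies Lemma~\ref{lem:backtrack-implies-intersection} with the incoming geodesic determined by $\psi\phi_i$ (not by $\psi$ alone), and concludes that any two model paths which backtrack too much against $\psi\phi_i$ must intersect, hence share an endpoint; this rules out at most two points on each side of $C(\hat\tau_i,\hat\eta_i)$. Since $|C|=6$, there are always admissible intermediate points, and the only constraint on the prescribed endpoints is at the first and last level.

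You instead aim for a \emph{global} bad set $Z$ of size $\le 2$ by absorbing the bounded-norm factors $g_{j-1}$ into the backtracking constant and applying Proposition~\ref{prop:divide}(b) with the single mapping class $\psi$, followed by your (correct) Helly dichotomy on pairwise-intersecting edges. This is a slightly different route. The one point to be careful about is the phrase ``harmless after enlarging $l$'': the constant $l$ was fixed once and for all from the collection $M$ before $n$ (and hence $K_1(n)$) enters, so you cannot retroactively enlarge it. The paper sidesteps this by never absorbing --- it simply feeds $\psi\phi_i$ into Lemma~\ref{lem:backtrack-implies-intersection}, which takes an arbitrary incoming geodesic and returns the same $l$. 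Your argument can be repaired in the same way (apply the lemma at each level with $\psi g_{j-1}$ and note that bad paths at a fixed level pairwise share endpoints), at the cost of losing the global $|Z|\le 2$ and reverting to the paper's ``at most four excluded per level'' count; since six connection points are available, this is still enough.
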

\begin{proof}
  First, consider the splitting sequence
  \[ \tau = \tau_1 > \tau_2 > \cdots > \tau_n \] corresponding
  to the assumption that $\tau_n$ is obtained from $\tau$ by $n$
  splits. We choose mapping classes $\phi_i$ so that
  \[ \hat{\tau}_i = \phi_i^{-1}\tau_i \] is a model track. We may
  choose $\phi_1 = \mathrm{id}$ since $\tau_1=\tau$ is itself a model
  track, and $\phi_n = f$ by the assumption. Furthermore, choose
  codimension-$1$ faces $\hat{\eta}_i$ of the $\hat{\tau}_i$ which
  carry uniquely ergodic laminations. We put $\hat{\eta}_1 = \eta$ and
  $\eta' = \eta_n$.

  Observe that there is a constant $K_1$ so that all $\phi_i$ have
  word norm at most $K_1$, since the set of train tracks obtained from
  a model track by at most $n$ splits is finite, and therefore all
  $\phi_i$ lie in some finite set of the mapping class
  group, just determined by $n$ and the choice in (Decomposition).

  The desired path $\mathfrak{p}$ will have the form
  \[ 
    \phi_1 \hat{\mathfrak{p}}_1 \ast \cdots \phi_n \hat{\mathfrak{p}}_n.
  \]
  where each $\hat{\mathfrak{p}}_i$ is a model path, which will be
  chosen below. Being more precise, the model path
  $\hat{\mathfrak{p}}_i$ will join a point $x_i$ to a point $y_i$. To
  satisfy our conditions, we want to require
  \begin{itemize}
  \item $x_i \in C(\hat{\tau}_i, \hat{\eta}_i)$ for all $i$,
  \item $y_n \in C(\hat{\tau}_n, \hat{\eta}_n)$
  \item $\phi_i(y_i) = \phi_{i+1}(x_{i+1})$, so that the concatenation
    is indeed a path, and
  \item $\hat{\mathfrak{p}}_i$ has at most $\ell$ backtracking with
    $\psi\phi_i$.
  \end{itemize}
  As a first step towards showing that these requirements are possible,
  assume first that we pick only initial points $x_i$ for all $i$.
  Then we put
  \[ y_i = \phi_i^{-1}\phi_{i+1}(x_{i+1}) \] and observe that it is contained in a face of
  $\phi_i^{-1}(\tau_{i+1})$. Since $\tau_{i+1}$ is a split of
  $\tau_i$, $\phi_i^{-1}(\tau_{i+1})$ is a split of the model track
  $\hat{\tau}_i = \phi_i^{-1}(\tau_i)$. Thus, by (Model Paths), there
  will be $\hat{\mathfrak{p}}_i$ with the endpoints $x_i, y_i$ for all
  $i$.

  We therefore only need to make sure that we can choose initial
  points $x_i$ so that the no-backtracking condition is satisfied. By
  Lemma~\ref{lem:backtrack-implies-intersection}, there is a set $B_i$
  of at most two points on $C(\tau_i, \eta_i)$ and similarly a set
  $B_i'$ of at most two points on
  $C(\phi_i^{-1}(\tau_{i+1}), \phi_i^{-1}(\eta_{i+1})$ so that
  \emph{any} model path with endpoints distinct from $B_i, B_i'$ has
  at most $\ell$ backtracking with $\psi\phi_i$. 

  Thus, by excluding at most four points in each $C(\tau_i, \eta_i)$
  we can make a choice of $x_i$ (and therefore $y_i)$ and model paths
  $\hat{\mathfrak{p}}_i$ so that all three conditions are satisfied,
  showing the lemma.
\end{proof}

We are now ready to describe the inductive procedure (and therefore,
finish the proof). The construction is very similar to the argument
in the proof of the previous lemma. Namely, suppose we are given
\[ \tau^{(n)}_i = \Phi^{(n)}_i\overline{\tau}^{(n)}_i , i = 1, \ldots,
  m_n, \] with the desired properties. We now aim to construct a path
\[ \mathfrak{p} = \mathfrak{p}_- \ast \mathfrak{p}_1 \ast
  \mathfrak{p}_2 \ast \cdots \ast \mathfrak{p}_n \ast
  \mathfrak{p}_+ \] where $\mathfrak{p}_-, \mathfrak{p}_+$ are images
under $\Phi^{(n)}_1, \Phi^{(n)}_{m_n}$ of a path obtained by applying
Lemma~\ref{lem:end-partial-refinement}, and the other $\mathfrak{p}_i$
are images of model paths under the $\Phi^{(n)}_i$. The new sequence
of train tracks is the obtained from the splitting package of the path
$\mathfrak{p}$. Arguing (with excluded boundary points) as in the
proof of Lemma~\ref{lem:end-partial-refinement}, we can guarantee the
no-backtracking condition. Note that all increment
  mapping classes come from splitting towards a model path, except
  possibly in $\mathfrak{p}_-, \mathfrak{p}_+$. Hence, the choices are compatible with (Refinement) a) through c).

\section{Morse Boundaries}
In this section, we explain how the results about laminations relate
to the Morse boundary of Teichm\"uller space and the mapping class
group. We begin by recalling some concepts (see \cite{Cor},
\cite{ChCorSis} for details). We also remark that \cite{Cor} contains
a technical omission, which is addressed in an appendix which at the time
of writing is only available in the preprint version \cite{ChCorSis}. 

\smallskip Suppose $N:\mathbb{R}^+\times\mathbb{R}^+ \to \mathbb{R}^+$
is a function, which we call a \emph{Morse gauge}. It is called a
\emph{refined Morse gauge} if it is non-decreasing (in both
coordinates), and continuous in the second coordinate. We assume that
any gauge we use is refined. This is not a serious restriction
(compare \cite[Lemma A.4]{ChCorSis}).

%\marginpar{We say that a geodesic... is a (refined) $N$-Morse...}
We then say that a geodesic $\rho:I \to X$ in a proper geodesic metric
space is \emph{(refined) $N$--Morse} if for any
$(\lambda, \epsilon)$--quasigeodesic with endpoints on $\rho$ lies in
a closed $N(\lambda, \epsilon)$--neighbourhood of $\rho$.
%The function
%$N:\mathbb{R}^+\times\mathbb{R}^+ \to \mathbb{R}^+$ is called the
%\emph{Morse gauge}.
When we omit the mention and simply speak of a Morse geodesic $\rho$,
we mean that there is some refined gauge $N$ so that $\rho$ is
$N$--Morse.

We say that two Morse rays $\rho, \rho'$ in $X$ are equivalent, if
they have bounded Hausdorff distance (in fact, if this holds, then the
Hausdorff distance can be bounded by a constant just depending on the
Morse gauges; see \cite[Key Lemma]{Cor}).

As a set, the Morse boundary $\partial_M X$ of $X$ is the set of
equivalence classes of (refined) Morse rays. Here as well the
restriction to refined rays is no restriction \cite[Theorem A.10]{ChCorSis}.

To define a topology, we need to be a bit careful about
gauges. Namely, choose a basepoint $e \in X$, and fix a refined gauge
$N$. Then let $\partial_M^N X_e$ be the set of $N$--Morse rays
starting in $e$. As a set of maps $[0, \infty) \to X$ this set carries
a natural compact-open topology (uniformly convergence on compact
intervals). The fact that the Morse gauge is fixed (and refined)
implies that $\partial_M^N X_e$ is compact with this topology
\cite[Lemma A.5]{ChCorSis}.

Consider the set $\mathcal{M}$ of all possible refined Morse gauges, and say
$N \leq N'$ (for $N,N' \in \mathcal{M}$) if
$N(\lambda, \epsilon) \leq N'(\lambda, \epsilon)$ for all
$\lambda, \epsilon$. We then define the \emph{Morse boundary} as a
direct limit:
\[ \partial_M X = \varinjlim \partial_M^NX_e \] with the direct limit
topology. Implicit in this notation is that the limit does not depend on the chosen basepoint $e$.
Concretely, the $\partial_M^NX_e$ are all subspaces of the
Morse boundary, and a set $U \subset \partial_M X$ is open exactly if
the intersections $U \cap \partial_M^NX_e$ are open for all choices of
gauge. 

In particular, if $\gamma:[a,b] \to \partial_M^NX_e$ is a continuous
path, then it also defines a continous path in $\partial_M X$.

\bigskip The Morse boundary of Teichm\"uller space and of the
mapping class group are described in \cite{Cor}. The core ingredient is
that it is known that a Teichm\"uller geodesic ray is Morse (for some
gauge) exactly if it is thick (for some thickness constant
$\epsilon$) \cite{Minsky}.

As a consequence of the results in the previous section, we then obtain
\begin{cor}
  The Morse boundary of Teichm\"uller space (and of the mapping class
  group) are path-connected.
\end{cor}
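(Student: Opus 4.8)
We must show that $\partial_M\mathcal{T}(S)$ (equivalently $\partial_M\mathrm{MCG}(S)$, by quasi-isometry) is path-connected, given the machinery already assembled: Theorem~\ref{thm:filling-thick} producing, for any two $\epsilon$-thick rays, an $\epsilon'$-thick continuous family interpolating between them, together with Minsky's characterization of Morse Teichm\"uller rays as exactly the thick ones.

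**Plan.** The plan is to reduce the statement to Theorem~\ref{thm:filling-thick} together with the topological description of $\partial_M\mathcal{T}(S)$ as a direct limit $\varinjlim\partial_M^N\mathcal{T}(S)_{X_0}$. First I would take two arbitrary points $\xi_0,\xi_1\in\partial_M\mathcal{T}(S)$. Each is represented by a Morse ray from $X_0$; by Minsky's theorem each such ray is $\epsilon_i$-thick for some $\epsilon_i>0$, and we may pass to a common $\epsilon=\min(\epsilon_0,\epsilon_1)>0$ so that both $\rho_0,\rho_1$ are $\epsilon$-thick rays from $X_0$. Next I would invoke Theorem~\ref{thm:filling-thick} to get an $\epsilon'>0$ and a continuous family $H:[0,1]\times[0,\infty)\to\mathcal{T}(S)$ with $H(0,\cdot)=\rho_0$, $H(1,\cdot)=\rho_1$, $H(s,0)=X_0$, and each $H(s,\cdot)$ an $\epsilon'$-thick ray. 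Applying Minsky's theorem in the reverse direction, each $\epsilon'$-thick ray is $N$-Morse for a Morse gauge $N$ depending only on $\epsilon'$; since $\epsilon'$ is uniform along the family, \emph{all} the rays $H(s,\cdot)$ are $N$-Morse for one fixed refined gauge $N$ (enlarging $N$ if necessary so that it is refined, which is harmless by \cite[Lemma A.4]{ChCorSis}).

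**Continuity in the boundary topology.** The map $s\mapsto H(s,\cdot)$ then lands in $\partial_M^N\mathcal{T}(S)_{X_0}$, the space of $N$-Morse rays from $X_0$ with the compact-open topology. Continuity of $H$ as a map on $[0,1]\times[0,\infty)$ gives, in particular, that $s\mapsto H(s,\cdot)$ is continuous into this compact-open topology (uniform convergence on compact intervals of $[0,\infty)$ is exactly what $H$ continuous on $[0,1]\times[0,T]$ for each $T$ delivers). As recalled in the excerpt, a continuous path into $\partial_M^N\mathcal{T}(S)_{X_0}$ descends to a continuous path into $\partial_M\mathcal{T}(S)$ with the direct limit topology. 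This path connects $\xi_0$ to $\xi_1$. The transfer to the mapping class group is then automatic: the Morse boundary is a quasi-isometry invariant, and $\mathrm{MCG}(S)$ is quasi-isometric to $\mathcal{T}(S)$ by the \v{S}varc--Milnor-type statements underlying \cite{Cor}.

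**Main obstacle.** The only genuine subtlety — and the place to be careful — is the \emph{uniformity of the Morse gauge} along the family $H$. Path-connectivity in the direct limit topology is strictly stronger than connectedness ``level by level'': we genuinely need one gauge $N$ working for every $s$, which is precisely what the uniform thickness constant $\epsilon'$ in Theorem~\ref{thm:filling-thick} buys us via Minsky's quantitative equivalence. One should also note the subtlety flagged in the excerpt that the rays in Theorem~\ref{thm:filling-thick} are the \emph{Teichm\"uller} geodesic rays $t\mapsto H(s,t)$, which are automatically geodesics in $\mathcal{T}(S)$, so there is no separate reparametrization issue; and that, having the basepoint fixed at $H(s,0)=X_0$ for all $s$, we land honestly in a single $\partial_M^N\mathcal{T}(S)_{X_0}$ rather than having to chase basepoint-independence of the direct limit. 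Modulo this uniformity point, the argument is a short assembly of the preceding results.
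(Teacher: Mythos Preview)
Your proposal is correct and follows essentially the same approach as the paper's proof: represent the two boundary points by thick rays, apply Theorem~\ref{thm:filling-thick} to obtain a uniformly $\epsilon'$--thick interpolating family, convert uniform thickness to a single Morse gauge $N'$, and use continuity of $H$ to get a path in $\partial_M^{N'}\mathcal{T}_{X_0}$ which descends to the direct limit. One small correction on the final transfer: $\mathrm{MCG}(S)$ is \emph{not} quasi-isometric to $\mathcal{T}(S)$ (moduli space is noncompact, so \v{S}varc--Milnor does not apply), and the paper instead invokes \cite[Theorem~4.12]{Cor} directly for the homeomorphism of Morse boundaries.
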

\begin{proof}
  Let $\xi, \eta \in \partial_M\mathcal{T}(S)$ be given. These are
  given by Teichm\"uller geodesic rays $\rho_0, \rho_1$ which are Morse
  for some gauge $N$ (compare Lemma 4.1 of [CD19]). Hence, there is
  some $\epsilon$ so that $\rho_0, \rho_1$ are
  $\epsilon$--thick. Apply Theorem~\ref{thm:filling-thick} to find a
  family $H$ of $\epsilon'$--thick Teichm\"uller geodesics
  interpolating between them. Since $\epsilon'$ is uniform, there is
  some Morse gauge $N'$ so that all $H(s,\cdot)$ are $N'$--Morse
  rays. Continuity of $H$ implies that $s \to H(s, \cdot)$ defines a
  continuous path $[0,1] \to \partial_M^{N'}\mathcal{T}_{H(0,0)}$ joining
  $\xi, \eta$.  By the direct limit topology, this then also defines a
  continuous path joining $\xi, \eta$ in $\partial_M\mathcal{T}$.

  The claim about the Morse boundary of the mapping class group
  follows since the Morse boundaries of Teichm\"uller space and the
  mapping class group are homeomorphic, by \cite[Theorem 4.12]{Cor}.
\end{proof}

\section{A Markov Version}
In this section we construct a Markov process whose limit set is
path-connected and consists of $\delta$-thick laminations for some $\delta$. The general idea is similar to our
construction of paths in the Morse boundary, but we have to be more
careful in our choices to ensure the Markov property. Note in this section we choose some actual numbers for readability and concreteness. Our choices are not optimal.

\smallskip We again begin by choosing
\begin{itemize}
\item \textbf{(Decomposition)} A polyhedral decomposition of
  $\mathcal{PML}$ given by train track polyhedra $P(\tau_i)$ as in
  Section~\ref{lem:finitely-many-models}
\item \textbf{(Faces)} We let
  \[ P \] to be the set of all those codimension-$1$ faces $P(\eta)$
  of polyhedra $P(\tau_i)$ chosen in (Decomposition) which contain
  cobounded laminations. 

\item \textbf{(Connection Points)} For any $\eta \in P$, we choose
  a set $Y_\eta$ of cobounded points on $P(\eta)$ of cardinality 20. 

\item \textbf{(Symmetries)} Given $\eta \in P$, we let
  \[ S_\eta \] be the set of all mapping classes $f$ so that
  $f(\eta) \in P$. Note that $S_\eta$ is finite.

\item \textbf{(Model Paths)} We choose model paths between each
  $\theta Y_{\theta^{-1}\eta_a}$ to $Y_{\eta_b}$ where $\eta_a$ and
  $\eta_b$ are a pair of boundary planes on the same $\tau$ and
  $\theta \in S_{\eta_a}$. Let $M$ be the set of all those model
  paths.

\item \textbf{(Increments)} Let $I$ be the set of mapping classes
  $\phi$ obtained in the splitting packages of all paths in $M$.

\item A basepoint $\alpha_0$ in the curve graph.
  
\item Constants $B, l$ as in
  Lemma~\ref{lem:backtrack-implies-intersection} for the collection $M$.
\item a \emph{splitting depth} $N > 0$, as in Proposition~\ref{prop:divide}
  for $B,l$ for the collection $M$.
  
\end{itemize}

We will use the following crucial property
\begin{defin}
  \begin{enumerate}[i)]
  \item A pair of mapping classes $(\phi, \Psi)$ is \emph{PIP} if the
    concatenation
    \[ [\alpha_0, \Psi\alpha_0] \ast [\Psi\alpha_0,
      \Psi\phi\alpha_0] \] does not have backtracking of length
    $l$. 
  \item A pair $(\mathfrak{p}, \Psi)$ of a path $\mathfrak{p} \in M$
    and a mapping class $\Psi$ is \emph{PIP} if $(\phi, \Psi)$ is PIP
    for every mapping class $\phi$ in the splitting package of
    $\mathfrak{p}$.
  \end{enumerate}
\end{defin}

We observe the following properties. The first follows from
Lemma~\ref{lem:backtrack-implies-intersection} and our choice of
paths, while the second follows from the first since the number of
paths and endpoints is large enough.
\begin{itemize}
\item \textbf{(Not-PIP determines endpoint)} We have that for each
  mapping class $\Psi$, all of the $\mathfrak{p}\in M$ so that
  $(\phi,M)$ are not PIP share an endpoint.
  
\item \textbf{(Lots of Matches)} Let $(\Phi,(\eta_a,\eta_b))$ be in
  the splitting package of a model path. Then there is at most one
  model path endpoint $x$ on $\eta_a\cup \eta_b$, so that: for any
  other model path endpoint $x'$ on $\eta_a\cup \eta_b$, and at least
  $\frac 8 9$ of the model path endpoints $y$ on the opposite side we
  have $(\mathfrak{p}(x,y),\Phi)$ is PIP. 
\end{itemize}

The {directed} graph encoding the desired Markov process will be built in three stages. 
For the initial graph, let $V_0'$ be the set of
tuples
\[ (\Phi,(\eta_a,\eta_b)) \] of data associated to points in the
splitting packages in the model paths, and let $V_1'$ be the set of
model paths. We construct a (bipartite) directed graph $G'$ with vertex set
$V' = V'_0 \sqcup V'_1$ and oriented edges $E'$:% between 
\begin{enumerate}[i)]
\item Edges with source $V_0'$ and range $V_1'$: we have an edge from $(\Phi,(\eta_a,\eta_b)) \in V_0'$ to
  $\mathfrak{p}\in V_1'$ if $\mathfrak{p}$ has one endpoint on
  $\eta_a$, one endpoint on $\eta_b$, and $(\Phi,\mathfrak{p})$ are
  a PIP.  
\item Edges with source $V_1'$ and range $V_0'$: we have an edge from $\mathfrak{p}$ to each of the $(\Phi,(\eta_a,\eta_b)) \in V_0$
  in the splitting package of $\mathfrak{p}$.  
\end{enumerate}

The next graph is obtained by removing edges. Namely, we obtain $E''$
from $E'$ by removing all the edges from $(\Phi,(\eta_a,\eta_b))$ to
$\mathfrak{p}(x_1,x_2)$ if one of the endpoints fails the condition
mentioned in (Lots of Matches). That is, we remove the edge if for
either of the endpoints $x_i$, for more than $\frac 1 9$ of the points
$y$ on the opposite face, $(\mathfrak{p}(x_i,y),\Phi)$ is not PIP. 
This removes the edges from at most one endpoint from each $\Phi$. Let $G''$ be the resulting graph.

The following is the analog of Corollary~\ref{cor:noback-to-thick} in this setting.
\begin{prop} There exists $\epsilon>0$  so that if $c=v_1,v_2,...$ is a (one-sided) infinite directed path in the above graph and $\phi_1,...$ be the mapping class coordinates then $\phi_1\gamma_0$, $\phi_1\phi_2\gamma_0$, .... converges to $\lambda \in \partial_\infty \mathcal{C}(S) $ and $\lambda$ is $\epsilon$-thick.
\end{prop}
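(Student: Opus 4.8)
The plan is to reduce the statement to the no-backtracking machinery already in place, namely Lemma~\ref{lem:bdd-backtrack-implies-qgeod}, Lemma~\ref{lem:coboundedness-criterion} and Corollary~\ref{cor:noback-to-thick}. The key observation is that an infinite directed path $c = v_1, v_2, \ldots$ in $G''$ alternates between vertices in $V_0''$ (data tuples $(\Phi,(\eta_a,\eta_b))$) and $V_1''$ (model paths $\mathfrak{p}$), so the sequence of mapping class coordinates $\phi_1, \phi_2, \ldots$ has a very specific structure: reading along $c$, each model-path vertex $\mathfrak{p}$ contributes (in the splitting package sense) the increments $\Psi_1, \ldots, \Psi_N \in I$ of a full splitting package of depth $N$, and the edge-conditions built into $G'$ and $G''$ (PIP pairs) were designed exactly so that consecutive increments do not backtrack too much. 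So first I would unwind the definitions: for an edge from $(\Phi,(\eta_a,\eta_b))$ to $\mathfrak{p}$ the PIP condition gives that $[\alpha_0,\Phi\alpha_0]\ast[\Phi\alpha_0, \Phi\psi\alpha_0]$ has no backtracking of length $l$ for every increment $\psi$ of $\mathfrak{p}$; and for an edge from $\mathfrak{p}$ to $(\Phi',(\eta_a',\eta_b'))$ the relation between consecutive increments inside a single splitting package gives the same no-backtracking bound by construction. Concatenating these, the sequence of geodesics $[\gamma_0, \phi_1\gamma_0], [\phi_1\gamma_0, \phi_1\phi_2\gamma_0], \ldots$ (each of which has length at least $L$ because the splitting depth $N$ was chosen with the quasigeodesic length certificate in mind, cf.\ the proof of Proposition~\ref{prop:divide}) forms an $l$--backtracking broken geodesic in the sense of Section~2.

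With that in hand, the rest is essentially a citation. By Lemma~\ref{lem:bdd-backtrack-implies-qgeod}, the concatenation $[\gamma_0,\phi_1\gamma_0]\ast[\phi_1\gamma_0,\phi_1\phi_2\gamma_0]\ast\cdots$ is a $K$--quasigeodesic in the curve graph for a uniform $K$; in particular $d(\gamma_0, \phi_1\cdots\phi_n\gamma_0)\to\infty$ and, being a quasigeodesic ray in a Gromov hyperbolic space, it converges to a well-defined point $\lambda\in\partial_\infty\mathcal{C}(S)$. (If one wants $\gamma_0 = \alpha_0$ one simply notes the basepoint curve is the same one used to define all the PIP conditions.) For thickness: the point $X_0$ has systole $\gamma_0$, and the path in Teichm\"uller space obtained by concatenating Teichm\"uller geodesic segments $X_0, \phi_1 X_0, \phi_1\phi_2 X_0,\ldots$ is a $\nu$--quasigeodesic whose systoles uniformly fellow-travel the broken geodesic $G$ above, hence form a $\nu$--quasigeodesic in the curve graph. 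This is exactly the hypothesis of Lemma~\ref{lem:coboundedness-criterion} (equivalently, the situation of Corollary~\ref{cor:noback-to-thick}), so the Teichm\"uller geodesic from $X_0$ to each $\phi_1\cdots\phi_n X_0$ is $\epsilon$--thick for a uniform $\epsilon$, and the limiting ray is $\epsilon$--thick; this ray has vertical lamination $\lambda$, so $\lambda$ is $\epsilon$--thick relative to $X_0$.

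The only genuinely delicate point — and the step I would be most careful with — is checking that the combinatorial edge-conditions in $G'$, $G''$ really do translate into the no-backtracking hypothesis needed, \emph{uniformly} and \emph{at every junction} along $c$. There are two kinds of junctions: those internal to a single model path's splitting package (between consecutive increments $\Psi_i, \Psi_{i+1}$ of the same $\mathfrak{p}$), and those between the last increment of one model path and the first increment of the next (which is where the PIP condition on the $V_0''\to V_1''$ edge is used, with $\Psi = \phi_1\cdots\phi_{\text{(end of previous package)}}$). For the first kind one should note that within one splitting package the relevant concatenation $[\Psi_i^{-1}\alpha_0,\alpha_0]\ast[\alpha_0,\Psi_{i+1}\alpha_0]$ has bounded backtracking automatically — this is implicit in the construction of splitting packages and the choice of $N$ via Proposition~\ref{prop:divide} — and one must make sure the PIP definition is stated against the right "incoming" mapping class, which is the accumulated product up to that point, not just the immediately preceding increment. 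Once these bookkeeping issues are settled, $\epsilon$ is the constant furnished by Lemma~\ref{lem:coboundedness-criterion} applied with the data $(\phi_i, g_i)$ just described, and it depends only on $K$ (equivalently on $l$ and the finite set $I$), so it is uniform over all infinite directed paths $c$ in the graph, as claimed.
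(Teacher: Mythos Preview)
Your overall approach is the same as the paper's: the PIP condition built into the edges of $G''$ forces the concatenation $[\gamma_0,\phi_1\gamma_0]\ast[\phi_1\gamma_0,\phi_1\phi_2\gamma_0]\ast\cdots$ to be an $l$--backtracking broken geodesic, and then Lemma~\ref{lem:coboundedness-criterion} yields both convergence to some $\lambda\in\partial_\infty\mathcal{C}(S)$ and uniform $\epsilon$--thickness of the limiting Teichm\"uller ray. The paper's proof is literally two sentences to this effect.

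However, you have misread the combinatorics of the graph, and this is what generates the ``delicate point'' you worry about. The directed path $c$ in $G''$ alternates between $V_0'$ and $V_1'$, and the ``mapping class coordinates'' $\phi_1,\phi_2,\ldots$ are simply the first coordinates of the successive $V_0'$--vertices: each $V_0'$--vertex contributes \emph{one} $\phi_i$, not a full splitting package's worth of increments. A model-path vertex $\mathfrak{p}\in V_1'$ contributes nothing to the list of $\phi_i$; the outgoing edge $\mathfrak{p}\to(\Phi',(\eta_a',\eta_b'))$ selects a \emph{single} datum from its splitting package, and that $\Phi'$ becomes the next $\phi_{i+1}$. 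Consequently there is only one kind of junction, and the PIP condition on the incoming edge $(\Phi_i,(\eta_a,\eta_b))\to\mathfrak{p}$ says precisely that $(\phi_{i+1},\Phi_i)$ is PIP, i.e.\ $[\alpha_0,\Phi_i\alpha_0]\ast[\Phi_i\alpha_0,\Phi_i\phi_{i+1}\alpha_0]$ has no backtracking of length $l$; translating by $\phi_1\cdots\phi_{i-1}$ gives exactly the junction condition you need. Your ``internal junctions'' do not exist, so the extra verification you flag (and the appeal to Proposition~\ref{prop:divide} for it) is unnecessary. Once you correct this reading, your argument collapses to the paper's.
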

\begin{proof} By the PIP condition $\gamma_0*\phi_1\gamma_0*\phi_1\phi_2*\gamma_0*....*\phi_1...\phi_k\gamma_0$ is a quasi-geodesic in the curve graph that is $\ell$-backtracking efficient. Let $X_0$ be a point in Teichm\"uller space with systole $\gamma_0$. By
Lemma \ref{lem:coboundedness-criterion}, the corresponding Teichm\"uller geodesic segments from $X_0$ to $\phi_1....\phi_kX_0$ are all $\epsilon$-thick and thus, so is the limiting Teichm\"uller geodesic ray, which is a ray from a base point in Teichm\"uller space to $\lambda$. By the definition of $\epsilon$-thick lamination, this means $\lambda$ is $\epsilon$-thick.
\end{proof}
%\begin{proof}[Proof Sketch]
 % This is a consequence of our criterion
 % Lemma~\ref{lem:coboundedness-criterion} and the choices of our
%  constants.
%  {\color{red} Todo: Fill this in.}
%\end{proof}
%\begin{proof}By the setup at the beginning of the section, Corollary \ref{cor:admissible-limits} we get convergence to a point in $\lambda \in \partial_\infty \mathcal{C}(S) $. Once again using the no back-tracking condition,  Corollary \ref{cor:noback-to-thick} gives that $\lambda$ is thick.
%\end{proof}}
%{\sc  I think this should follow from quoting Section 5, which might need a bit of reworking. I think  Corollary \ref{cor:noback-to-thick}  should be what we quote. However, is $p_{\infty}(t)$ a limiting object in the boundary of the Curve graph? Alternately, we could phrase things in terms of a single morse gauge.%One way to prove this would be to upgrade Corollary \ref{cor:admissible-limits} to have the limits be uniformly thick and then quote that corollary and our assumptions on no backtracking.

The following is the core technical result which will be used in the
proof of path-connectivity of the limit set of the Markov process.

\begin{prop}\label{prop:graph connected}
Given 
\begin{enumerate}[i)]
\item a sequence $(\phi_0,(\eta^{(0)}_a,\eta^{(0)}_b)),....,(\phi_r,(\eta^{(r)}_a,\eta_b^{(r)})) \in {V''}$ so that 
\begin{itemize}
\item $\theta_i:=\phi_{i+1}^{-1} \phi_i\in S_{\eta_b}^{(i)}$ for all $i$
\item $\eta_a^{(i+1)}=\theta_i\eta_b^{(i)}$ for all $0\leq i<r$ 
\end{itemize}
\item $(\psi_s,(\zeta_a^{(s)},\zeta_b^{(s)}))$ is the endpoint of an edge starting in $ (\phi_0,(\eta^{(0)}_a,\eta^{(0)}_b))$
\item $(\psi_e,(\zeta_a^{(e)},\zeta_b^{(e)}))$ is the endpoint of an edge starting in $(\phi_r,(\eta_a^{(r)},\eta_b^{(r)}))$
\end{enumerate}
there exist
\begin{enumerate}
\item paths $\mathfrak{p}_0,...,\mathfrak{p}_r \in M$ 
\item\label{conc:descendant} there is an edge with source $(\phi_i, (\eta_a^{(i)},\eta_b^{(i)}))$ and range $\mathfrak{p}_i$. % are PIP for all $i$ \textsc{isn't this automatic from the final graph?} {\color{red} Yes}
\item $\mathfrak{p}_i=\mathfrak{p}(x_i,y_i)$ where $x_i \in \theta_{i-1}S_{\theta_{i-1}\eta_b^{(i-1)}} \subset \eta_a^{(i)}$ and $y_i \in S_{\eta_b^{(i)}}$
\item $x_i=\theta_{i-1}y_{i-1}$. 
\item there is a path, $\mathfrak{p}_s$, so that 
\begin{itemize}
%\item $(\psi_s,(\zeta_a^{(s)},\zeta_b^{(s)}))$ arises from applying PROPOSITION BLAH to $\mathfrak{p}_s$ \textsc{I would phrase this with edges in the graph; same below} {\color{red} I agree}
\item There is an edge with source $(\phi_0,(\eta_a^{(0)},\eta_b^{(0)}))$ and range $\mathfrak{p}_s$.
 \item $(\psi_s,(\zeta_a^{(s)},\zeta_b^{(s)}))$ is a splitting package for $\mathfrak{p}_s$.
%\item There is an edge in $E''$ with source $(\phi_0,(\eta_a^{(0)},\eta_b^{(0)}))$ and range $\mathfrak{p}_{-1}$
\item $\mathfrak{p}_s$ and $\mathfrak{p}_0$ share an endpoint. 
\end{itemize}
\item Similarly,  there is a path, $\mathfrak{p}_e$ so that 
\begin{itemize}
%\item $(\psi_e,(\zeta_a^{(e)},\zeta_b^{(e)}))$ arises from applying PROPOSITION BLAH to $\mathfrak{p}_e$
\item There is an edge with source $(\phi_r,(\eta_a^{(r)},\eta_b^{(r)}))$ and range $\mathfrak{p}_e$
\item $(\psi_e,(\zeta_a^{(e)},\zeta_b^{(e)}))$ is a splitting package for $\mathfrak{p}_e$.
%\item There is an edge in $E''$ with source $(\phi_0,(\eta_a^{(0)},\eta_b^{(0)}))$ and range $\mathfrak{p}_{-1}$
\item $\mathfrak{p}_e$ and $\mathfrak{p}_r$ share an endpoint. 
\end{itemize}
\end{enumerate}
\end{prop}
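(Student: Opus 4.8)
The proof is a chained application of Lemma~\ref{lem:end-partial-refinement} and the combinatorial counting built into (Lots of Matches), carried out inductively along the sequence $(\phi_0,(\eta_a^{(0)},\eta_b^{(0)})),\ldots,(\phi_r,(\eta_a^{(r)},\eta_b^{(r)}))$. The key point is that at each vertex $(\phi_i,(\eta_a^{(i)},\eta_b^{(i)}))$ we must choose a model path $\mathfrak{p}_i = \mathfrak{p}(x_i,y_i)$ whose starting endpoint $x_i$ is forced (it must equal $\theta_{i-1}y_{i-1}$, determined by the previous choice), and whose terminal endpoint $y_i \in S_{\eta_b^{(i)}}$ is free, subject to: (a) the edge from $(\phi_i,(\eta_a^{(i)},\eta_b^{(i)}))$ to $\mathfrak{p}_i$ survives in $G''$, i.e. $(\mathfrak{p}_i,\phi_i)$ is PIP and both endpoints pass the (Lots of Matches) filter, and (b) the choice of $y_i$ still leaves enough freedom at step $i+1$ (where $x_{i+1} = \theta_i y_i$ becomes the forced endpoint).

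First I would set up the bookkeeping: by (Lots of Matches), for each $\phi_i$ there is at most one ``bad'' endpoint on $\eta_a^{(i)}\cup\eta_b^{(i)}$ whose removal from $G''$ we must avoid; and for any good endpoint $x$, at least $\tfrac89$ of the endpoints $y$ on the opposite face give $(\mathfrak{p}(x,y),\phi_i)$ PIP. So as long as the forced endpoint $x_i = \theta_{i-1}y_{i-1}$ is not the bad one, there remain at least $\tfrac89\cdot 20 > 17$ admissible choices for $y_i$ that keep the $G''$-edge alive; and at the next step, $x_{i+1}=\theta_i y_i$ lands among the endpoints of the relevant face, and we need it to avoid being the (at most one) bad endpoint there. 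Since $\theta_i$ is a bijection of endpoint sets, at most one choice of $y_i$ is forbidden on that account, leaving at least $16$ genuinely good choices at each step --- in particular a nonempty set. This is exactly the kind of counting already used to deduce (Lots of Matches) from Lemma~\ref{lem:backtrack-implies-intersection}, just iterated; the cardinality $20$ in (Connection Points) was chosen precisely to make this work with room to spare.

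Next I would handle the two endpoint paths $\mathfrak{p}_s$ and $\mathfrak{p}_e$. These are not model paths carried by a single face-pair but \emph{concatenations} of images of model paths, and this is precisely where Lemma~\ref{lem:end-partial-refinement} is invoked: with $\psi = \phi_0$ (resp.\ $\phi_r$), the prescribed splitting-package data $(\psi_s,(\zeta_a^{(s)},\zeta_b^{(s)}))$ (resp.\ $(\psi_e,\ldots)$) determines the track $\tau_n$ obtained by $n$ splits, and the lemma produces --- for all but at most two choices of the free endpoint --- a path from $x$ to $f(y)$ built from model paths under mapping classes of norm $\le K_1$, with the required $\ell$-backtracking control against $\psi\phi_i$ for every mapping class $\phi$ in its splitting package. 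The ``shared endpoint'' condition with $\mathfrak{p}_0$ (resp.\ $\mathfrak{p}_r$) is arranged by feeding the endpoint $x_0$ (resp.\ $y_r$) already chosen for $\mathfrak{p}_0$ (resp.\ $\mathfrak{p}_r$) as the common point; since Lemma~\ref{lem:end-partial-refinement} excludes at most two endpoints, and we have $20$ to start from, this is no obstruction provided we do the endpoint bookkeeping in the right order --- i.e., choose $\mathfrak{p}_0$'s free endpoint first, then $\mathfrak{p}_s$, etc.

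\textbf{The main obstacle.} The delicate part is the order of choices and confirming that the various ``at most one'' / ``at most two'' / ``$\tfrac19$ fraction'' exclusions never accumulate to exhaust all $20$ available endpoints at any single face. Concretely: at vertex $i$ the endpoint $x_i$ is forced by step $i-1$; we must (1) know $x_i$ is not the bad endpoint of $\phi_i$ from (Lots of Matches), which we guarantee by never having \emph{selected} a bad target $y_{i-1}$ at the previous step; (2) pick $y_i$ from the $\ge\tfrac89\cdot20$ PIP-preserving choices; (3) further restrict $y_i$ so that $\theta_i y_i$ is not the bad endpoint at face $\eta_a^{(i+1)}$ (one more exclusion); and at the two ends, (4) reconcile with the two-point exclusion coming from Lemma~\ref{lem:end-partial-refinement}. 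The arithmetic $20 - 2 - 2 - 1 - \lfloor 20/9\rfloor > 0$ comfortably survives all of these simultaneously, so the construction goes through; I would simply present this as a ``greedy'' induction, choosing at each stage the free endpoint to dodge all finitely many (uniformly boundedly many) forbidden values, and then verify that conclusions \eqref{conc:descendant} and items (3)--(7) hold by unwinding the definitions of the edge sets $E'$, $E''$ together with the defining property $\theta_i \in S_{\eta_b^{(i)}}$, $\eta_a^{(i+1)} = \theta_i\eta_b^{(i)}$.
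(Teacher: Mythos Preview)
Your core inductive counting argument --- the ``greedy'' choice of $y_i$ dodging at most one forbidden endpoint at each step via (Lots of Matches) --- is essentially what the paper does, and the arithmetic is fine. But your treatment of $\mathfrak{p}_s$ and $\mathfrak{p}_e$ is genuinely wrong and would not produce what the proposition asks for.

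The paths $\mathfrak{p}_s,\mathfrak{p}_e$ are \emph{single model paths in $M$}, not concatenations. The first bullet in conclusion~(5) says there is an edge in $G''$ from $(\phi_0,(\eta_a^{(0)},\eta_b^{(0)}))\in V_0'$ to $\mathfrak{p}_s$, and edges from $V_0'$ land in $V_1'=M$. Lemma~\ref{lem:end-partial-refinement} is a Section~4 tool for building concatenations through \emph{iterated splits} of a fixed track; it plays no role in Section~6 and invoking it here is a category error. The existence of $\mathfrak{p}_s$ is immediate from hypothesis~(ii): $(\psi_s,\ldots)$ being reachable from $(\phi_0,\ldots)$ means precisely that some model path $\mathfrak{p}_s\in M$ sits in between in $G''$, with $(\psi_s,\ldots)$ in its splitting package.

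Your order of choices is also backwards in a way that matters. You propose to choose $\mathfrak{p}_0$ first, fix its endpoint $x_0\in\eta_a^{(0)}$, and then find $\mathfrak{p}_s$ through $x_0$. But hypothesis~(ii) only guarantees \emph{some} model path carrying the splitting data $(\psi_s,\ldots)$ and admitting an edge from $(\phi_0,\ldots)$; you have no control over which endpoints it has, so you cannot force it to pass through a pre-chosen $x_0$. The paper does it the other way: pick $\mathfrak{p}_s$ first (arbitrarily, satisfying the first two bullets of~(5)), read off its endpoint $x\in\eta_a^{(0)}$ --- which automatically satisfies (Lots of Matches) for $\phi_0$ since the $G''$-edge survived the pruning --- and then start the chain $\mathfrak{p}_0$ from this $x$. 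If the other endpoint of $\mathfrak{p}_s$ already works as $y_0$ one can even take $\mathfrak{p}_0=\mathfrak{p}_s$; otherwise replace only the $\eta_b^{(0)}$-endpoint. The same happens at the terminal end: fix $\mathfrak{p}_e$ first, let $y'\in\eta_b^{(r)}$ be its endpoint there, and when you reach step $r-1$ aim $y_{r-1}$ so that $\mathfrak{p}_r:=\mathfrak{p}(\theta_r y_{r-1},y')$ is a child of $\phi_r$ in $E''$. Once you reverse the order and drop Lemma~\ref{lem:end-partial-refinement}, your induction is exactly the paper's.
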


\begin{proof} %In this proof we call $v_1$ a `child' of a vertex $v_0$ if $v
  We begin choosing $\mathfrak{p}_s$, $\mathfrak{p}_e$ arbitrary satisfying the first and second bullet points of (5) and (6) respectively. This is possible because otherwise $(\psi_s,(\zeta_a^{(s)},\zeta_b^{(s)}))$ would not be the range of an edge with source
 $(\phi_0,(\eta_a^{(0)},\eta_b^{(0)}))$. Similarly for $(\psi_e,(\zeta_a^{(e)},\zeta_b^{(e)}))$. 

We now consider the endpoint of $\mathfrak{p}_s$ on $\eta_b^{(0)}=\theta_0^{-1}\eta_a^{(1)}$, which we denote by $y\in Y_{\eta_b^{(0)}}$ for now. 
If $\theta_0y$ satisfies the condition in (Lots of Matches) for $\phi_1$ we let $\mathfrak{p}_0=\mathfrak{p}_s$. 
Otherwise, let $x \in \eta_a^{(0)}$ be the endpoint of $\mathfrak{p}_s$ on $\eta_a^{(0)}$. 
We choose $\mathfrak{p}_0$ to be a path from $x$ to $y_0$ where $\theta_0y_0$ satisfies the condition in (Lots of Matches) for $\phi_1$. 
This exists by the fact that there is at most one $x' \in \eta_a^{(1)}$ that fails this condition for $\phi_1$. 
%Corollary \ref{cor:lots of matches} for $\phi_0$ 
%(and the fact that by Corollary \ref{cor:lots of matches} there is at most one $x' \in \eta_a^{(1)}$ that fails Corollary \ref{cor:lots of matches} for $\phi_1$). 

For $1\leq i<r-1$ we recursively choose $\mathfrak{p}_{i+1}$ given $\mathfrak{p}_i$ so that it satisfies \eqref{conc:descendant} in the conclusion of the proposition and moreover the endpoint of $\mathfrak{p}_{i+1}$ in $Y_{\eta_b^{(i+1)}}$, which we denote $y$ has that $\theta_{i+1}y$ satisfies the condition in (Lots of Matches) for $\phi_{i+2}$. We can do this because there is only one point in $Y_{\eta_{b}^{(i+1)}}$ which doesn't and we recursively have that the given endpoint for $\mathfrak{p}_{i+1}$ on $\eta_a^{(i+1)}$ satisfies the condition in (Lots of Matches) for $\phi_{i+1}$ so we can avoid the at most one forbidden endpoint on $\eta_b^{(i+1)}$. We now choose $\mathfrak{p}_{r-1}$ to satisfy the additional condition that its endpoint $y \in \eta_b^{(r-1)}$ has that $\mathfrak{p}(\theta_ry,y')$ is a child of $\phi_r$ in $E''$ where $y' \in Y_{\eta_b^{(r)}}$ is the endpoint of $\mathfrak{p}_e$ on $\eta_b^{(r)}$. We now choose $\mathfrak{p}_r$ to be $\mathfrak{p}_r:=\mathfrak{p}(\theta_ry,y')$.  
\end{proof}

%{\color{red} Replacing the paths $\mathfrak{p}_i$ by their splitting packages} {\color{blue} Splitting {\color{red} Do you mean: take the splitting packages? Also, ``beginning of the section'' has moved, I think...} the $\mathfrak{p}_i$ as described at the beginning of this section} 
Obtaining the splitting packages from the paths in the conclusion of  Proposition \ref{prop:graph connected}, we get the following corollary:
\begin{cor}\label{cor:graph connected} Under i)-iii) of the previous proposition we obtain:
\begin{enumerate}[(a)]%[label=(\roman*)]
\item A sequence $(\psi_0,(\zeta_a^{(0)},\zeta_b^{(0)})),...,(\psi_q,(\zeta_a^{(q)},\zeta_b^{(q)})$
\item $\psi_{j+1}^{-1}\psi_j \in S_{\zeta_b^{(j)}}$ for all $j$.
\item\label{conc:LtoR} $\zeta_a^{(j+1)}=\psi_{j+1}^{-1}\psi_j\zeta_b^{(j)}$
\item $(\psi_s,(\zeta_a^{(s)},\zeta_b^{(s)})), (\psi_e,(\zeta_a^{(e)},\zeta_b^{(e)}))\in \{(\psi_j,(\zeta_a^{(j)},\zeta_b^{(j)}))\}_{j=0}^q$
\item\label{conc:monotonic} $d_1,...,d_r$ so that $\sum_{i=1}^r d_i=q+1$
and  $(\psi_j,\phi_i)$ are PIP for all $j\in [\sum_{\ell=1}^{i-1}d_\ell,\sum_{\ell=1}^id_\ell)$ 
\end{enumerate}
\end{cor}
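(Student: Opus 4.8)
The plan is to produce the sequence $(\psi_j,(\zeta_a^{(j)},\zeta_b^{(j)}))_{j=0}^q$ by reading off, in order, the data tuples occurring in the splitting packages of depth $N$ of the model paths $\mathfrak{p}_s,\mathfrak{p}_0,\dots,\mathfrak{p}_r,\mathfrak{p}_e$ supplied by Proposition~\ref{prop:graph connected}. Recall that the splitting package of one model path $\mathfrak{p}$ is a finite list of tuples $(\Phi,(\eta_a,\eta_b))$, one for each subpath of its decomposition (with $\Phi$ carrying the $N$-th derived track of $\mathfrak{p}$ back to a model track). Concatenating these lists in the displayed order gives the sequence of~(a); the block length $d_i$ of~(e) is the number of subpaths in the splitting package of $\mathfrak{p}_i$, so $\sum_i d_i=q+1$, with the short packages of $\mathfrak{p}_s,\mathfrak{p}_e$ spliced onto the ends of the $\mathfrak{p}_0$- and $\mathfrak{p}_r$-blocks, which is consistent because $\mathfrak{p}_s$ and $\mathfrak{p}_e$ share endpoints with $\mathfrak{p}_0$ and $\mathfrak{p}_r$ respectively.

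I would first check the symmetry relations (b) and~(c). Inside a single block these are the standing properties of splitting packages recalled in Section~\ref{sec:splitting-packages}: two consecutive subpaths of one model path meet in a point $z$, their data tuples are two depth-$N$ splitting packages of $z$, and Lemma~\ref{lem:increment} applied at $z$ gives $\psi_{j+1}^{-1}\psi_j\in S_{\zeta_b^{(j)}}$ together with $\zeta_a^{(j+1)}=\psi_{j+1}^{-1}\psi_j\,\zeta_b^{(j)}$. At a transition between the blocks of $\mathfrak{p}_i$ and $\mathfrak{p}_{i+1}$ the relevant point is the terminal endpoint $y_i$ of $\mathfrak{p}_i$, which by conclusions~(3)--(4) of Proposition~\ref{prop:graph connected} is carried onto the initial endpoint $x_{i+1}=\theta_i y_i$ of $\mathfrak{p}_{i+1}$ by the symmetry $\theta_i=\phi_{i+1}^{-1}\phi_i\in S_{\eta_b^{(i)}}$. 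Transporting the last data tuple of the $\mathfrak{p}_i$-block forward by $\theta_i$ produces a second depth-$N$ splitting package of $x_{i+1}$, and Lemma~\ref{lem:increment} again yields (b) and~(c) across the seam, with the symmetry $\theta_i$ folded into the conjugating element; the two seams at $\mathfrak{p}_s$ and $\mathfrak{p}_e$ are handled identically with $\theta$ trivial.

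For the PIP assertion~(e): by conclusion~(2) of Proposition~\ref{prop:graph connected}, for each $i$ there is an edge of $G''$ of the first type (from a data tuple to a model path) with source $(\phi_i,(\eta_a^{(i)},\eta_b^{(i)}))$ and range $\mathfrak{p}_i$. By the definition of this edge set this is equivalent to $(\phi_i,\mathfrak{p}_i)$ being PIP, i.e.\ to $(\phi,\phi_i)$ being PIP for \emph{every} mapping class $\phi$ in the splitting package of $\mathfrak{p}_i$; applying this with $\phi=\psi_j$ for each $j$ in the $i$-th block gives exactly that $(\psi_j,\phi_i)$ is PIP, and monotonicity of the decomposition of $[0,q]$ into the intervals $[\sum_{\ell<i}d_\ell,\sum_{\ell\le i}d_\ell)$ is built into the listing. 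Finally, (d) is immediate: hypotheses~(ii)--(iii) and conclusions~(5)--(6) of Proposition~\ref{prop:graph connected} say that $(\psi_s,(\zeta_a^{(s)},\zeta_b^{(s)}))$ and $(\psi_e,(\zeta_a^{(e)},\zeta_b^{(e)}))$ are data tuples in the splitting packages of $\mathfrak{p}_s$ and $\mathfrak{p}_e$, which are among the lists we concatenated.

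I expect the delicate point to be the normalisation of the mapping classes recorded at the seams: one must decide consistently whether to store the ``raw'' increments $\Phi$ coming from each $\mathfrak{p}_i$'s splitting package or the accumulated compositions (equivalently, whether to work with the $\mathfrak{p}_i$ individually or with the single concatenated path $\phi_0\mathfrak{p}_0\ast\phi_1\mathfrak{p}_1\ast\cdots\ast\phi_r\mathfrak{p}_r$, which is a genuine path precisely because $\phi_i y_i=\phi_{i+1}x_{i+1}$ by $x_{i+1}=\theta_iy_i$ and $\theta_i=\phi_{i+1}^{-1}\phi_i$), since the symmetries $\theta_i$ have to appear in the conjugations of~(b),(c) at the block boundaries but must not contaminate the PIP relations of~(e), which are phrased relative to the splitting packages of the individual $\mathfrak{p}_i$. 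Once that bookkeeping is pinned down, everything else is an unwinding of the definitions of splitting package, of PIP, and of the graph $G''$.
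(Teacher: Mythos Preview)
Your approach is exactly the paper's: the authors introduce the corollary with the single sentence ``Obtaining the splitting packages from the paths in the conclusion of Proposition~\ref{prop:graph connected}, we get the following corollary,'' and give no further argument. You have simply unpacked that sentence---reading off the data tuples from the splitting packages of $\mathfrak{p}_s,\mathfrak{p}_0,\dots,\mathfrak{p}_r,\mathfrak{p}_e$, invoking Lemma~\ref{lem:increment} for the seam relations (b),(c), and the edge definition in $G''$ for the PIP condition (e)---and you have correctly isolated the one genuine bookkeeping issue (raw increments versus accumulated products at block boundaries) that the paper leaves implicit.
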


The final graph $G=(V, E)$ is now obtained by  letting $V=V_0'$ and $E$ be all length two paths in $G''$ starting from a vertex in $V_0$'. 
%{\color{blue} collapsing all (oriented) edges connecting vertices in $V'_1$ to vertices in $V'_0$. \textsc{is this clear?}}

We now define the \emph{limit set (based at $v_\ast$}) of the graph as the
set of all accumulation points of sequences of the form
\[ \cdots \psi_k \cdots \psi_1 \alpha_0. \] where for each $i$, there
are vertices in the graph $G$ containing $\phi_i, \phi_{i+1}$ in their
first coordinate which are joined by an oriented edge, and $\psi_1$ is
such for the vertex $v_\ast$.

The following is an immediate corollary of Proposition 6.2:
\begin{cor}
  Every point of the limit set (based at $v_\ast$) of the graph $G$ is a thick lamination.
\end{cor}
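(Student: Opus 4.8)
The plan is to read this off directly from Proposition~6.2 (the thickness statement which is the analog of Corollary~\ref{cor:noback-to-thick}), with only routine bookkeeping in between. Fix a point $\lambda$ in the limit set based at $v_\ast$: by definition it is an accumulation point of a sequence of curves $\psi_k\cdots\psi_1\alpha_0$ obtained from some infinite directed walk $v_\ast = w_0 \to w_1 \to w_2 \to \cdots$ in $G$, whose vertices carry the $G$-adjacent first-coordinate mapping classes.

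First I would \emph{unfold} this $G$-walk back into $G''$. By the definition of the edge set $E$ of $G$, each arrow $w_i \to w_{i+1}$ is a directed path of length two in $G''$, necessarily of the shape $w_i \to \mathfrak{p}_i \to w_{i+1}$ with $w_i, w_{i+1} \in V_0'$ and $\mathfrak{p}_i \in V_1'$ a model path; since consecutive such length-two paths share the common endpoint $w_{i+1}\in V_0'$, their concatenation
\[ w_0 \to \mathfrak{p}_0 \to w_1 \to \mathfrak{p}_1 \to w_2 \to \cdots \]
is a genuine infinite directed path in $G''$. The $V_0'$-vertices occurring along it are exactly $w_0, w_1, w_2, \ldots$, so the sequence of partial products of its mapping-class coordinates, applied to $\gamma_0$, is (possibly after discarding the repeated terms contributed by the interleaved model-path vertices) precisely the sequence of first coordinates of the $w_i$ applied to $\gamma_0$ -- which, taking $\alpha_0 = \gamma_0$, is the very sequence $\psi_k\cdots\psi_1\alpha_0$ from the definition of the limit set. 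The only thing to verify here is that reading these coordinates along the $G''$-path matches the conventions of Proposition~6.2: a type-(i) edge of $G''$ encodes exactly a PIP relation between the relevant mapping classes, while a type-(ii) edge encodes that the target vertex's data comes from the splitting package of $\mathfrak{p}_i$, and these two operations are exactly what assembles the mapping-class coordinates used there. This matching is immediate from the definitions, but it must be spelled out; it is the only real work in the proof.

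Then I would apply Proposition~6.2 to the infinite directed path in $G''$ just constructed: it furnishes a constant $\epsilon>0$, \emph{independent of the path}, so that $\phi_1\gamma_0,\phi_1\phi_2\gamma_0,\ldots$ converges in $\partial_\infty\mathcal{C}(S)$ to an $\epsilon$-thick lamination. In particular the sequence $\psi_k\cdots\psi_1\alpha_0$ converges, so its unique accumulation point is this limit; hence $\lambda$ is an $\epsilon$-thick lamination, and since $\epsilon$ depends neither on $v_\ast$ nor on the walk, the thickness is uniform across the whole limit set. I do not expect a genuine obstacle: all the substance -- thickness, and the uniform constant -- is inherited wholesale from Proposition~6.2, which itself rests on the no-backtracking/PIP structure of the graph together with Lemma~\ref{lem:coboundedness-criterion}; the rest is the notational unfolding described above.
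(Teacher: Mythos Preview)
Your proposal is correct and follows essentially the same approach as the paper, which simply declares the result an immediate corollary of Proposition~6.2. Your explicit unfolding of a $G$-walk into a $G''$-walk (via the definition of $E$ as length-two $G''$-paths) is exactly the routine translation the paper leaves implicit, and your observation that the sequence therefore converges---so that its unique accumulation point is the $\epsilon$-thick limit from Proposition~6.2---is the only additional remark needed.
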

%{\color{red}\begin{proof} We first prove that for any element of the limit set of the graph, $\lambda$, there is a one-sided infinite path in the graph so that the corresponding concatenations of mapping classes converge to $\lambda$. 
%\end{proof}}

The rest of the section is concerned with the proof of the following
statement.
\begin{thm}\label{thm:markov path}
  The limit set (based at $v_\ast$) of the graph $G$ is path-connected.
\end{thm}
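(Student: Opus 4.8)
The plan is to realize the limit set as a quotient of a shift space on the graph $G$, and to transport path-connectivity from the (approximating) paths in $\mathcal{PML}$ that we already know how to build, exactly as in Section 4, but now respecting the Markov constraint. Concretely, fix the basepoint vertex $v_\ast$. A point of the limit set is a limit $\cdots\psi_k\cdots\psi_1\alpha_0$ along an infinite directed path in $G$ starting at $v_\ast$; by Proposition 6.2 (and Lemma~\ref{lem:coboundedness-criterion}) such limits exist and are $\epsilon$--thick. The first step is to set up the coding map $\Pi$ from the space of infinite admissible edge-paths in $G$ (with the natural product topology, which makes it compact) to the limit set, and to observe $\Pi$ is continuous and surjective; continuity is the Gromov-product estimate from Corollary~\ref{cor:close} (divergence in the curve graph of the partial products forces the Gromov products of the limits to go to infinity along a common prefix).

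The heart of the argument is to show that the image is path-connected, and for this I would imitate the inductive construction of Section~\ref{sec:thick-paths}: build a sequence of \emph{finite} admissible directed paths in $G$ (equivalently, sequences of train tracks / data-tuples) which refine each other, with prescribed endpoints, whose limits give a genuine path. Given a finite admissible path through data-tuples $(\phi_0,(\eta_a^{(0)},\eta_b^{(0)})),\dots,(\phi_r,(\eta_a^{(r)},\eta_b^{(r)}))$ realizing the coarse geometry at stage $n$, Proposition~\ref{prop:graph connected} (via its Corollary~\ref{cor:graph connected}) produces the next-stage refinement: model paths $\mathfrak{p}_0,\dots,\mathfrak{p}_r$ (plus the end pieces $\mathfrak{p}_s,\mathfrak{p}_e$ matching prescribed boundary data) whose concatenation is a path of uniquely ergodic laminations, whose splitting packages give a longer admissible path in $G$, and crucially where the monotonic block decomposition $d_1,\dots,d_r$ of conclusion~\eqref{conc:monotonic} guarantees that consecutive increments are PIP — so that Lemma~\ref{lem:bdd-backtrack-implies-qgeod} applies and the concatenations are $\ell$--backtracking broken geodesics at every stage. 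The key point is that the ``(Lots of Matches)'' and ``(Not-PIP determines endpoint)'' properties, together with the cardinality~$20$ of the $Y_\eta$'s, leave enough freedom to make all these choices while excluding the finitely many bad endpoints; this is precisely what Proposition~\ref{prop:graph connected} was engineered to deliver, and it is the step I would quote rather than re-prove.

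With the refining sequence of finite admissible paths in hand, I would then define, for each stage $n$, a continuous map $\mathfrak{p}_n:[0,1]\to$ (finite set of data-tuples) which is constant on the subintervals corresponding to the tuples at stage $n$ and whose refinement at stage $n+1$ subdivides each such interval according to the blocks $d_i$. Taking the limit: for each $s\in[0,1]$ the nested intervals select an infinite admissible edge-path in $G$, hence (by Proposition~6.2) a thick lamination $\mathfrak{p}_\infty(s)$; the endpoints $s=0,1$ are pinned to the prescribed boundary data $(\psi_s,\cdot)$ and $(\psi_e,\cdot)$, showing every pair of limit points (coming from edges out of $v_\ast$) is joined. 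Continuity of $s\mapsto\mathfrak{p}_\infty(s)$ is Corollary~\ref{cor:close} again: two parameters in a common stage-$n$ interval share the first $\sim n$ increments, which (as $n\to\infty$) diverge in the curve graph and hence force the visual distance of the limits to zero. Finally, since every point of the limit set arises as $\Pi$ of some infinite admissible path, and any such path can be reached as an endpoint of one of these interpolating families, the whole limit set is path-connected.

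I expect the main obstacle to be bookkeeping rather than conceptual: one must check that the ``end-piece'' constructions $\mathfrak{p}_s,\mathfrak{p}_e$ at each stage are simultaneously compatible with (a) the prescribed incoming data $(\psi_s,\cdot),(\psi_e,\cdot)$, (b) the PIP/no-backtracking requirement with the stage-$n$ mapping class, and (c) sharing an endpoint with $\mathfrak{p}_0$ resp.\ $\mathfrak{p}_r$ — all while only having $20$ connection points to work with and having to avoid the at-most-few forbidden ones. This is the analog of Lemma~\ref{lem:end-partial-refinement} in the Markov setting, and it is exactly the content packaged into Proposition~\ref{prop:graph connected}(5)--(6); so in the write-up this reduces to carefully invoking that proposition at the two ends of every stage, and verifying that the resulting longer sequence still satisfies hypotheses i)--iii) of the proposition so the induction can continue.
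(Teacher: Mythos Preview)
Your proposal is correct and follows essentially the same approach as the paper: the inductive refinement via Proposition~\ref{prop:graph connected}/Corollary~\ref{cor:graph connected} (which the paper packages as Proposition~\ref{prop:axiomatic}, the ``fan of paths'' with (Endpoints), (Refinement), (Left-to-right), (Paths)), followed by the Gromov-product convergence argument analogous to Corollary~\ref{cor:close}. The only cosmetic difference is that the paper works with actual approximating paths $\widehat{\mathfrak{p}}^{(n)}$ in $\mathcal{PML}$ (concatenations $\Psi^{(n)}_i\mathfrak{p}^{(n+1)}_i$ of mapping-class images of model paths) and shows they converge uniformly, whereas you phrase things via the coding map $\Pi$ and nested intervals in $[0,1]$; these are equivalent bookkeeping devices for the same construction.
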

The strategy of the proof is similar to the construction employed in
Section~\ref{sec:thick-paths}, with paths in $G$ taking the place of
mapping class group sequences. The proof has two steps: Proposition \ref{prop:axiomatic}, which establishes some features of our construction and then to proof of Theorem \ref{thm:markov path} shows how these features imply path connectivity. 

To be more precise, suppose that we are given two (one-sided)-infinite paths
$c_0, c_1$ in the graph $G$, both starting at the vertex $v_\ast$. 
Inductively applying Proposition~\ref{prop:graph
  connected} and Corollary \ref{cor:graph connected} we obtain a ``fan of paths'' consisting of
vertices $v^{(j)}_k, k=0,\ldots, m_j$ so that:
\begin{prop}\label{prop:axiomatic}
Given a pair of directed, infinite paths in $(G,V)$, $c_0, c_1$ which 
emanate from the same vertex $v_0$ we have the following:
\begin{description}
\item[(Endpoints)] The ``outermost'' paths in the fan are the two given
  paths:
  \[ c_0({n}) = v^{(n)}_0, \quad c_1({n}) = v^{(n)}_{m_{n}}. \]
 % {\color{red} Indeed, going from $j$ to $j+1$, we proscribe $\psi_s$ and $\psi_e$.}
\item[(Refinement)] There is a monotonic decomposition
  \[ [1,m_{n+1}] = I_1 \cup \cdots \cup I_{m_n} \]
  into subintervals, so that for all $i \in I_j$ the vertices
  \[ v^{(n)}_j \quad\mbox{ and }\quad v^{(n+1)}_i \]
  are joined by an edge in $G$.

\item[(Left-to-right)] Denoting
  \[ v^{(n)}_i = (\phi^{(n)}_i, (\sigma^{(n)}_i, \eta^{(n)}_i)), \]
  we have that
  \[ \theta^{(n)}_i:=\left(\phi^{(n)}_{i+1}\right)^{-1} \phi^{(n)}_i\in S_{\eta^{(n)}_i} \]
  for all $i$ and 
  \[ \sigma^{(n)}_{i+1}=\theta^{(n)}_i\eta^{(n)}_{i}. \]
  
%  {\color{red} This follows from \eqref{conc:LtoR}.}
\item[(Paths)] There are paths
  \[ \mathfrak{p}^{(n+1)} = \mathfrak{p}^{(n+1)}_1 \ast \cdots \ast \mathfrak{p}^{(n+1)}_{m_n} \] obtained as concatenations of model paths,
  so that each mapping class $\phi$ appearing as the first component
  of a vertex $v^{(n+1)}_i, i \in I_j$ appears in a splitting package of
  $\mathfrak{p}^{(n)}_j$ and vice versa.
  
%  {\color{red} This is Proposition \ref{prop:graph connected}}
\end{description}
\end{prop}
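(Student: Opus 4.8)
\section*{Proof proposal for Proposition~\ref{prop:axiomatic}}

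The plan is to prove the proposition by induction on $n$, constructing the ``fan'' one row at a time, with Proposition~\ref{prop:graph connected} and Corollary~\ref{cor:graph connected} doing all the work in the passage from row $n$ to row $n+1$. For the base case we set $m_0=0$ and $v^{(0)}_0=v_0$ (the common initial vertex of $c_0,c_1$); all four assertions are then vacuous or immediate. For the inductive step, suppose row $n$ has been built: vertices $v^{(n)}_0,\dots,v^{(n)}_{m_n}$ with $v^{(n)}_i=(\phi^{(n)}_i,(\sigma^{(n)}_i,\eta^{(n)}_i))$, satisfying \emph{(Endpoints)} and \emph{(Left-to-right)}. The point is that \emph{(Left-to-right)} for row $n$ is exactly hypothesis (i) of Proposition~\ref{prop:graph connected} with $r=m_n$ (matching $\sigma^{(n)}_i\leftrightarrow\eta_a^{(i)}$, $\eta^{(n)}_i\leftrightarrow\eta_b^{(i)}$). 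Next, since $c_0$ is a directed path in $G$ and, by definition, an edge of $G$ is a length-two path in $G''$, the edge $c_0(n)\to c_0(n+1)$ factors as $c_0(n)\to \mathfrak{p}_s\to c_0(n+1)$ through some model path $\mathfrak{p}_s$; this exhibits $c_0(n+1)=:(\psi_s,(\zeta^{(s)}_a,\zeta^{(s)}_b))$ as an edge-range from $v^{(n)}_0=c_0(n)$, giving hypothesis (ii), and symmetrically the edge $c_1(n)\to c_1(n+1)$ supplies $\mathfrak{p}_e$ and $c_1(n+1)=:(\psi_e,\dots)$ for hypothesis (iii).

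Applying Proposition~\ref{prop:graph connected} and then Corollary~\ref{cor:graph connected} to this data yields model paths $\mathfrak{p}_0,\dots,\mathfrak{p}_{m_n}$ (together with $\mathfrak{p}_s,\mathfrak{p}_e$), a guide path $\mathfrak{p}^{(n+1)}$ formed by concatenating mapping-class images of these, and a sequence $(\psi_0,(\zeta^{(0)}_a,\zeta^{(0)}_b)),\dots,(\psi_q,\dots)$ with $\psi_{j+1}^{-1}\psi_j\in S_{\zeta^{(j)}_b}$ and $\zeta^{(j+1)}_a=\psi_{j+1}^{-1}\psi_j\zeta^{(j)}_b$, containing $(\psi_s,\dots)$ and $(\psi_e,\dots)$, and partitioned into blocks of sizes $d_1,\dots,d_{m_n}$ so that $(\psi_j,\phi^{(n)}_i)$ is PIP whenever $j$ lies in block $i$. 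We then declare row $n+1$ to be this sequence: $v^{(n+1)}_k:=(\psi_k,(\zeta^{(k)}_a,\zeta^{(k)}_b))$, $m_{n+1}:=q$, and $\mathfrak{p}^{(n+1)}_j$ to be the model-path piece of $\mathfrak{p}^{(n+1)}$ sitting over block $j$. Granting this, the four assertions are verified by transcription: \emph{(Left-to-right)} for row $n+1$ is parts (b), (c) of Corollary~\ref{cor:graph connected}; \emph{(Paths)} is immediate since $\mathfrak{p}^{(n+1)}$ is a concatenation of model paths and, by Corollary~\ref{cor:graph connected}, the first coordinates of $v^{(n+1)}_i$ for $i$ in block $j$ are precisely the mapping classes of a splitting package of $\mathfrak{p}^{(n+1)}_j$; for \emph{(Refinement)}, take $I_j$ to be block $j$, and note that for $i\in I_j$ the pair $(\psi_i,\phi^{(n)}_j)$ is PIP and $\psi_i$ lies in a splitting package of $\mathfrak{p}^{(n+1)}_j$, which (by the edge produced in conclusion (2) of Proposition~\ref{prop:graph connected}) is the range of a $G''$-edge out of $v^{(n)}_j$, so composing the two $G''$-edges gives the required $G$-edge $v^{(n)}_j\to v^{(n+1)}_i$; finally \emph{(Endpoints)} follows because we fed in the actual witnesses $\mathfrak{p}_s,\mathfrak{p}_e$ of the next edges of $c_0,c_1$, so after the reindexing that puts $(\psi_s,\dots)$ first and $(\psi_e,\dots)$ last we get $v^{(n+1)}_0=c_0(n+1)$ and $v^{(n+1)}_{m_{n+1}}=c_1(n+1)$.

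The main obstacle is exactly this last piece of bookkeeping: one must run the construction so that at every stage the two outer strands of the fan stay pinned to the prescribed paths $c_0,c_1$. Concretely this requires (i) always choosing the seed paths $\mathfrak{p}_s,\mathfrak{p}_e$ in Proposition~\ref{prop:graph connected} to be the model paths witnessing the next $G$-edges of $c_0,c_1$ (legitimate, since there $\mathfrak{p}_s,\mathfrak{p}_e$ need only be \emph{some} edge-ranges compatible with the prescribed endpoint vertices), and (ii) truncating the splitting package of $\mathfrak{p}_s$ (resp.\ $\mathfrak{p}_e$) at the datum $c_0(n+1)$ (resp.\ $c_1(n+1)$) so that this becomes the extreme vertex of the new row, and checking this truncation is compatible with the block decomposition of Corollary~\ref{cor:graph connected}. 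A secondary point, already built into the proofs of the two cited results via the \textbf{(Lots of Matches)} and \textbf{(Not-PIP determines endpoint)} properties, is that the choices are made precisely so that the relevant $G''$-edges survive the pruning from $G'$ to $G''$, so that the compositions producing the $G$-edges of \emph{(Refinement)} really do exist. Everything else is a matter of reconciling index conventions relating the $m_n{+}1$ vertices of a row, the $m_n$ model-path pieces, and the $m_n$ refinement blocks.
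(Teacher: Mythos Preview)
Your proposal is correct and follows essentially the same approach as the paper: induction on $n$, with the base case being the single vertex $v_0$, and the inductive step obtained by feeding the \emph{(Left-to-right)} data of row $n$ (as hypothesis (i)) together with $c_0(n+1)$ and $c_1(n+1)$ (as hypotheses (ii), (iii)) into Proposition~\ref{prop:graph connected} and Corollary~\ref{cor:graph connected}, then reading off \emph{(Refinement)}, \emph{(Left-to-right)}, and \emph{(Paths)} from the conclusions. Your write-up is in fact more careful than the paper's in one respect: for \emph{(Refinement)} you correctly note that one needs conclusion~(2) of Proposition~\ref{prop:graph connected} (the $G''$-edge $v^{(n)}_j\to\mathfrak{p}_j$) together with the type-(ii) edge $\mathfrak{p}_j\to v^{(n+1)}_i$ to produce the required $G$-edge, rather than just the PIP condition from Corollary~\ref{cor:graph connected}(e), and your discussion of the endpoint bookkeeping (pinning the outer strands to $c_0,c_1$) makes explicit a point the paper leaves implicit.
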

\begin{proof}
This proceeds by induction. The base case is trivial being the empty path. 

Now we prove the inductive step, assuming we have the previous conclusions for $n=k$ and establish it { for}  $n=k+1$. We invoke Proposition \ref{prop:graph connected} and Corollary \ref{cor:graph connected} for the data from 
(Left-to-right) with $(\psi_s,(\zeta_a^{(s)},\zeta_b^{(s)}))=c_0(k+1)$ and $(\psi_e,(\zeta_a^{(e)},\zeta_b^{(e)}))=c_1(k+1)$. We obtain (Refinement) by  \eqref{conc:monotonic}. We obtain (Left-to-right) by  \eqref{conc:LtoR}. We obtain (Paths) by Proposition \ref{prop:graph connected}.
\end{proof}
\begin{proof}[Proof of Theorem \ref{thm:markov path}]
Now, for any vertex $v^{(n)}_i$ appearing in the fan, there is a unique chain
of ``ancestors''
\[ v^{(0)}_0, v^{(1)}_{i_1}, \ldots, v^{(n-1)}_{i_{n-1}}, v^{(n)}_i \]
so that $v^{(j)}_{i_j} \in I^{(j-1)}_{i_{j-1}}$. We define the
corresponding mapping class to be the product of the corresponding first entries:
\[ \Psi^{(n)}_i = \phi^{(1)}_{i_1} \circ \cdots \circ
  \phi^{(n-1)}_{i_{n-1}}. \] Observe that, by construction
(specifically, the existence of edges of $G$ between $v^{(j)}_{i_j}$ and $v^{(j+1)}_{i_{j+1}}$), the limits
\[ \lim_{n\to\infty} \Psi^{(n)}_{i_n}\alpha_0 \] exist and are
contained in the limit set (based at $v^{(0)}$).

Observe that by construction (specifically, the (Left-to-right) property) the
paths $\Psi^{(n)}_i\mathfrak{p}^{(n+1)}_i$ can be concatenated:
\[ \widehat{\mathfrak{p}}^{(n+1)} = \Psi^{(n)}_1\mathfrak{p}^{(n+1)}_1
  \ast \cdots \ast \Psi^{(n)}_{m_n}\mathfrak{p}^{(n+1)}_{m_n}. \]
 By the (Paths) property, we can parametrize the $\widehat{\mathfrak{p}}^{(n+1)}$ in such a way that if $t$ is a time so that 
  \[ \widehat{\mathfrak{p}}^{(n)}(t) = \Psi^{(n-1)}_k\mathfrak{p}^{(n)}_k(t_i) \]
  is a point in the $k$--th concatenated subpath, then
  \[ \widehat{\mathfrak{p}}^{(n+1)}(t) =
    \Psi^{(n)}_r\mathfrak{p}^{(n+1)}_r(t_r) \] is obtained by
  splitting that subpath further, i.e.
  $\Psi^{(n)}_r = \Psi^{(n-1)}_k\theta$, where $\theta$ comes from the
  splitting pacakge of the path $\mathfrak{p}^{(n)}_k$, and the corresponding vertices $v_k^{(n)}, v_r^{(n+1)}$ are joined by an edge.

  Now, the PIP condition guaranteed by the edges of $G$ implies that in this parametrization for any $R$ there exists $n_0$ so that for any $n,n'>n_0$ we have 
\begin{equation}\label{eq:further} \left( \widehat{\mathfrak{p}}^{(n)}(t),
      \widehat{\mathfrak{p}}^{(n')}(t) \right)>R
      \end{equation} for all $t \in [0,1]$. Indeed, as in the proof of Corollary \ref{cor:close}, there exists a uniform $C>0$, $D$ so that 
  \begin{itemize}
  \item $d_{\mathcal{C}}(\Psi_j^{(n)}\alpha_0,\alpha_0)>Cn$
  \item For any $\lambda$ in our limit set  and sequence $j_n$ so that $\lambda=\lim \Psi_{j_n}^{(n)}\alpha_0$ we have that the distance from the geodesic from $\alpha_0$ to $\lambda$ and $\Psi_{j_k}^{(k)}\alpha_0$ is at most $D$ for all $k$. 
  \end{itemize}
  
  Moreover, 
  %In fact, 
  since our parametrization respects the
  refinement of paths as in (Paths), for any constant $B > 0$, and any
  $t$ there is an interval $I$ containing $t$, and a constant $N_0$ so
  that
  \begin{equation} \label{eq:close on piece} \left( \widehat{\mathfrak{p}}^{(n)}(t),
      \widehat{\mathfrak{p}}^{(n')}(s) \right) > B 
      \end{equation} for any
  $n, n' > N_0, s \in I$. 
  Indeed, %this follows from the previous two bullet points and the fact that for any $n$ 
  we can choose $I$ small enough so that $s,s' \in I$ implies $\mathfrak{p}^{(n)}(s)$ and $\mathfrak{p}^{(n)}(s')$ come from adjacent intervals in the left-right order. Since $\max\{d_{\mathcal{C}}(\psi \beta,\beta):\psi \in S \text{ and }\beta \in \mathcal{C}(S)\}<\infty$, the first bullet point above this gives that for any $R'$ if $n$ is large enough (depending on $R'$),
\begin{equation}\label{eq:once far} \left( \widehat{\mathfrak{p}}^{(n)}(t),
      \widehat{\mathfrak{p}}^{(n)}(s) \right) >R'. 
      \end{equation}
      Combining this with \eqref{eq:further} we have \eqref{eq:close on piece}.

      %\marginpar{JC:Should we do the computation: $\left( \widehat{\mathfrak{p}}^{(n)}(t),
      %\widehat{\mathfrak{p}}^{(n')}(s) \right) \geq \min\{\left( \widehat{\mathfrak{p}}^{(n)}(t),
      %\widehat{\mathfrak{p}}^{(n')}(t) \right) ,\left( \widehat{\mathfrak{p}}^{(n')}(t),
      %\widehat{\mathfrak{p}}^{(n')}(s) \right) \}.$?}
  
  %{\color{blue} Is this clear? It is a bit tricky to describe what $I$ is formally -- it's the interval definded by adjacent path(s), once the progress at time $t$ has become much larger than $B$}
%\marginpar{Should we add a little more detail? One way to make it a little more formal is as follows:  The distance of splitting packages from adjacent paths in the left right order are uniformly bounded (independent of number of steps, placement in order). Also progress in the curve graph happens at a uniform rate in the number of steps. Lastly, there is a limit to how much backtracking can occur. With this in hand, for a given $t$ and $N$ we fix a small interval of the parametrization so that every point in it is either in the same path or an adjacent path at step $N$. By the two previous remarks this gives that for any $s,s'$ in this interval the Gromov product of the limit is large. }
  Hence, the $\widehat{\mathfrak{p}}^{(n+1)} $ converge (in the Gromov boundary of
  the curve graph) to a continuous limit path $\widehat{\mathfrak{p}}^{(\infty)}$. 

  By the (Paths) property, every point of
  $\widehat{\mathfrak{p}}^{(\infty)}$ is contained in the limit set
  (based at $v^{(0)}$), since initial parts of splitting sequences
  toward a point on $\widehat{\mathfrak{p}}^{(\infty)}$ agree with
  splitting sequences toward points on $\widehat{\mathfrak{p}}^{(n)}$
  for $n$ large enough.

  By the (Endpoints) property, $\widehat{\mathfrak{p}}^{(\infty)}$
  connects the limit points defined by $c_1, c_2$. This proves the theorem.
\end{proof}
%{\sc work on what is below!}

\begin{proof}[Proof of Theorem \ref{thm:markov}] 
We construct $H=\{h_1,...,h_k\}\subset MCG$ and a finite set of ordered pairs $Q\subset \{1,...,k\}^2$ so that if one considers 
$$\hat{H}=\{h_{n_1}...h_{n_j}:(n_{i},n_{i+1})\in Q \text{ for all }i<j\}$$ then the limit set of orbit map in the boundary of the curve graph $\hat{H}\gamma_0$ is path connected and contained in the cobounded laminations. 

%We first construct our set $H$: We will construct finite subsets of MCG, $H_0$ and $H_1$ and 
%our set $H$ will be a subset of $\{h_1h_0(h_1'):h_1,h_1'\in H_1 \text{ and }h_0 \in H_0\}$.  Moreover, the elements of $H$ will be in bijection with the edges of $G$. 
%Let $H_0$ be the set of mapping classes that come from splitting packages on $\mathfrak{p} \in M$. We now construct $H_1$. {\sc fix} 
Let $G=(V,E)$ be the graph we constructed. Let $H_0=\{\Phi:\exists (\Phi,(\eta_a,\eta_b)) \in V\}$. Choose a Pseudo-Anosov, $\Psi$, so that  $\Psi^n$ that has less that $l$ backtracking with respect to all $\Phi \in H_0$ for all  $n \in \mathbb{Z}$. For each vertex in $v\in V_0$ choose $N_v\in \mathbb{N}$. For any edge $e$ in $G$ let $A_e=\Psi^{N_v}\Phi\Psi^{-N_{v'}}$ where the source of $e$ is $v=(\Phi,(\eta_a,\eta_b))$ and the range of $e$ is $v'$. If 
$$D:=\min_{v\neq v'} \, \max\, \{\frac{N_v}{N_{v'}}, \frac{N_{v'}}{N_v}\}$$ is large enough, curve graph displacement shows $A_e\neq A_{e'}$ unless the unordered pairs of their range and source are the same. Because $\Psi$ and $\Psi^{-1}$ move in different directions along the axis, if $A_e=A_{e'}$, $e'$ cannot flip the range and source of $e$ (unless the range and source is the same in which case flipping the range and source doesn't change the edge).  Choose the $N_v$ so that $D$ is large enough and let $H$ be the set of $A_e$ where $e$ ranges over the edges of $G$.  %We put a pair $Q$ iff the range of the first entry is the source of the second entry.
%To each vertex $v$ in our graph we associate a mapping class $M_v$ so that $M_vhM_{v'}^{-1}$ are distinct for $v,v'$ in the graph and $h \in H_0$. 
%\marginpar{JC: HW remains} 
%Now the elements of $H$ are given by the directed edges of our graph in the following way: To the edge with source $v=(\Phi,(\eta_0,\eta_1))$ and range $v'=(\Phi',(\eta_0',\eta_1'))$ we associate the mapping class $M_v\Phi M_{v'}^{-1}$. 
We choose our set of pairs $Q$ to allow concatenations of mapping classes, which are in bijection with the directed edges, iff the range of the edge corresponding to the first mapping class is the source of the edge corresponding to the second mapping class.

It is now clear by construction that the set of laminations which are obtained as limits
$$\lim_{j\to\infty} h_{n_1}...h_{n_j}\gamma_0, \quad (n_{i},n_{i+1})\in Q \text{ for all }i$$
is exactly $M_{v_0}\Lambda$, where $\Lambda$ is the limit set from
Theorem \ref{thm:markov path}. It remains to show that this set agrees
with the set of all accumulation points of $\hat{H}\gamma_0$ in the
boundary of the curve graph. So, let $\xi$ be a boundary point which
is a limit of curves $\Phi_k\gamma_0$ in $\hat{H}\gamma_0$. Each $\Phi_k$ is a product
$$\Phi_k\gamma_0 = h_{n^{(k)}_1}...h_{n^{(k)}_{r_k}}\gamma_0, \quad (n^{(k)}_{i},n^{(k)}_{i+1})\in Q \text{ for all }i<r_k.$$
Now, let $U$ be any neighbourhood of $\xi$ in
$\mathcal{C}\cup\partial_\infty\mathcal{C}$, i.e. the set of all
curves and boundary points whose Gromov product with $\xi$ is
sufficiently large. For large $k$, all curves $\Phi_k\gamma_0$ are
contained in $U$ and, given $R$, by passing to a subsequence we may assume that the
initial segment $n^{(k)}_i, i \leq R$ is the same for all such $k$. 

We now show that we may assume that $\Phi_{k+1}=\Phi_kh$. By the no-backtracking condition, and by choosing $R$ large enough, we can then see that
\emph{any} curve $\Phi\gamma_0$ 
$$\Phi\gamma_0 = h_{n_1}...h_{n_r}\gamma_0, \quad (n_{i},n_{i+1})\in Q \text{ for all }i<r\text{ and }n_i = n^{(k)}_i, i\leq R$$
is also contained in $U$. Now the claim follows since every finite
path in our graph starting in $v_0$ can be extended to an infinite
path, and therefore $U$ contains a point in $M_{v_0}\Lambda$. Thus, we have an infinite path that has its endpoint in $U$. Choosing a sequence of $U$, $U_1,U_2,...$ converging to $\xi$, for each $U_i$, we obtain an infinite sequence of elements of $H$, $h_{n_1^{(i)}},....$ so that the limit (fixing $i$ and taking $k\to \infty$) of $h_{n_1^{(i)}}...h_{n_k^{(i)}}\gamma_0$ is in $U_i$ and $(n_\ell^{(i)},n_{\ell+1}^{(i)}) \in Q$ for all $\ell$. By the no-backtracking condition and the fact that our paths make definite progress (by our choice of $B$), for each $j$ there exists an $r,N$ so that if $h_{m_1},... \in H$ is a sequence so that $(m_\ell,m_{\ell+1}) \in Q$ for all $\ell$ and $h_{m_\ell}=h_{n_\ell^{(i)}}$ for some $i\geq r$ and all $\ell \leq N$ then  the limit of $h_{m_1}...h_{m_k}\gamma_0$ is in $U_j$. From the sequences $(h_{n_\ell^{(i)}})_\ell$, we obtain a sequence $(h_{n_\ell^{(\infty)}})_\ell$ whose initial segment agrees with the initial segment of infinitely many of these. By our previous argument the limit of $h_{n_1^{(\infty)}}...h_{n_k^{(\infty)}}\gamma_0$ is in $U_j$ for all $j$ and thus its limit is $\xi$.

%then the limit set of orbit map in the boundary of the curve graph $\hat{H}\gamma_0$ is path connected and contained in the cobounded laminations. 
%The limit set we obtain from this is clearly 
%Our final directed graph gives concatenations of mapping classes following a Markov rule that has additional data. At first view, this could appear problematic, indeed, a mapping class could appear as the first coordinate on more than one vertex. However, by a straightforward modification we can obtain a finite subset of the mapping class group and a Markov rule for their allowed concatenations so that the image of our limit set under a particular mapping class group element is exactly the orbit map of this subset. We will have one mapping class group element for each edge in our graph and we build these as follows. For each vertex there will be a large fixed auxiliary mapping class. To each edge starting at that vertex we append that auxiliary mapping class as a prefix. To each edge ending at that vertex we append the inverse as a suffix. The limit in $\partial_{\infty}\mathcal{C}(S)$ of orbit map of this subset of the mapping class is exactly the image of the limit set constructed above under the auxiliary mapping class for $v^{(0)}$. 
%However, for each pair of faces we can choose a large mapping class. We append the mapping class to the end of elements with that pair and the inverse to the start of mapping classes from splitting packages from a path connecting them.% we can append a long mapping class to the b, 
\end{proof}

\section{Appendix: A type of connected subset that can't be thick}

In this section we adapt an argument of Smillie for proving Masur's result that every translation surface contains a cylinder to rule out certain types of paths in the space of quadratic differentials, whose forward orbit under $g_t$ is uniformly thick. Much of this material is well known.

To state the result of this section, we need to define a metric on the space of unit area quadratic differentials in a fixed genus given by period coordinates: 
Let $\mathcal{Q}$ denote {the space of} quadratic differentials of a fixed topological type, $\mathcal{Q}(\vec{a})$ be a fixed stratum, $\mathcal{Q}_m$ be the corresponding marked quadratic differentials and $\mathcal{Q}_m(\vec{a})$ denote the corresponding marked stratum. %{\color{red}Is $\mathcal{Q}_m$ a bundle of TM space?}
%Note that $\mathcal{Q}(\vec{a})$ has labelled singularities{\color{red}what does that mean?}, but $\mathcal{Q}_m$ does not. 
%\marginpar{Rework this sentence} 
 % compact subset of moduli space will mean {\color{blue} a closed subset} of $\mathcal{Q}$ that projects to a compact subset of the moduli space of Riemann surfaces.
 Let $g_t:=\begin{pmatrix}e^t&0\\0&e^{-t}\end{pmatrix}$. We follow a construction of Frankel \cite[Section 3.2]{Frankel} to define a metric $d_{per}(\cdot,\cdot)$ coming from period coordinates on $\mathcal{Q}$: Following \cite[Section 1]{MS} using a triangulation by saddle connections and a marking map, one can locally identify $\mathcal{Q}(\vec{a})$ with (a quotient of) relative cohomology on convex open subsets of $\mathcal{Q}(\vec{a})$. Choosing a basis on integer relative homology one can define a norm on relative cohomology with $\mathbb{C}$ coefficients. Frankel uses this to define a path metric on all of $\mathcal{Q}$ coming from these charts, which he denotes $d_{Euc}$ and we will denote $d_{per}$. Note, the charts are still convex but they are not necessarilly open in $\mathcal{Q}$. Frankel proves that for any compact subset $\mathcal{K} \subset \mathcal{Q}$,  and for any large enough upper bound $L$ the compact set has only finitely many such charts so that every saddle connection in the triangulations has length at most $L$ %triangulations by saddle connections of length at most $L$
 \cite[Appendix A]{Frankel} %{\color{red}I'm a bit confused here. How does one identify different surfaces? I understand Frankel's A.3 -- is that what you mean?}. 
 Note, because we are dealing with compact subsets of all of moduli space $\mathcal{Q}$, and not its individual strata, $\mathcal{Q}(\vec{a})$ there is no uniform lower bound on the length of saddle connections (and the dimensions of the relative cohomology spaces we are considering vary). %We scale the metric so that at each point the norms that define the metric have that %We now record some basic properties.
 Throughout this section, we will deal with the preimange of compact subsets of moduli space in $\mathcal{Q}$ and strata $\mathcal{Q}(\vec{a})$ and obtain uniform statements over sets of this form.

We now can state our main theorem:

\begin{thm}\label{thm:main}  Let $\gamma:[0,1]\to \mathcal{Q}$ be a curve, $c\geq 0, \delta>0$ and $\mathcal{K}$ be the pre-image of a compact set in the moduli space of Riemann surfaces in $\mathcal{Q} \supset \mathcal{Q}(\vec{a})$. Assume 
 \begin{enumerate}
 \item (Uniform compact) $g_t\phi(s) \in \mathcal{K}$ for all $t\geq 0, $ $0\leq s\leq 1$. 
 \item (Uniform expansion) $d_{Per}(g_{t+r}\phi(a),g_{t+r}\phi(b))\geq  \delta e^{cr}d_{Per}(g_t\phi(a),g_t\phi(b))$ for all $0\leq a,b\leq 1$, $0\leq t$.
 \end{enumerate}
 Then $c\leq 1$. 
\end{thm}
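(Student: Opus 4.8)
The plan is to pit the lower bound of hypothesis (Uniform expansion) against the elementary \emph{upper} bound on how fast $g_r$ can stretch the period metric; the latter is the Smillie-style ingredient. Writing a relative period as $\int_c\omega\in\mathbb{C}$ (using the orienting double cover for quadratic differentials), the flow $g_r$ sends it to $e^r\Re\int_c\omega + i e^{-r}\Im\int_c\omega$, whose modulus is at most $e^r|\int_c\omega|$ for every $r\ge 0$. Since Frankel's norm on relative cohomology is a monotone function of the moduli of the periods of a fixed integral basis, the map $g_r$ has operator norm at most $e^r$ in every period chart. As $d_{per}$ is the path metric assembled from these charts, one concludes
\[ d_{per}(g_r x, g_r y) \le e^r\, d_{per}(x,y) \qquad \text{for all } x,y \text{ and } r\ge 0. \]
Indeed, any rectifiable path from $x$ to $y$ decomposes into arcs lying in period charts, $g_r$ stretches each arc by at most $e^r$, and $g_r$ of the path re-decomposes into charts; taking the infimum over paths gives the inequality.

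Granting this, the theorem follows quickly. Assume $\gamma$ is non-constant and pick $a,b$ with $\gamma(a)\ne\gamma(b)$. Applying the displayed Lipschitz bound to the pair $\gamma(a),\gamma(b)$ and combining with (Uniform expansion) at $t=0$ gives, for every $r\ge 0$,
\[ \delta e^{cr}\, d_{per}(\gamma(a),\gamma(b)) \le d_{per}(g_r\gamma(a),g_r\gamma(b)) \le e^r\, d_{per}(\gamma(a),\gamma(b)). \]
The compactness hypothesis (Uniform compact) enters here through Frankel's finiteness statement — only finitely many charts with all saddle connections of length $\le L$ occur over a compact set — which guarantees that $d_{per}$ is a genuine, non-degenerate metric on the relevant part of $\mathcal{Q}$, so that $d_{per}(\gamma(a),\gamma(b))>0$ and the inequality carries content. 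Dividing through by $d_{per}(\gamma(a),\gamma(b))$ gives $\delta e^{(c-1)r}\le 1$ for all $r\ge 0$; since $\delta>0$, letting $r\to\infty$ forces $c-1\le 0$, that is $c\le 1$.

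I expect the only genuine work to lie in making the Lipschitz bound rigorous for Frankel's metric on all of $\mathcal{Q}$, rather than for the flat structure on a single stratum $\mathcal{Q}(\vec{a})$: the period charts are convex but not open, and $\mathcal{Q}$ is a union of strata of varying dimension, so one must check that $g_r$ carries this patchwork of charts to a patchwork over which the arc-length computation can be run. For this one uses that $g_r$ preserves each stratum and acts linearly, with determinant one, on period coordinates — hence carries triangulation charts to triangulation charts — together with Frankel's finiteness of relevant charts over the compact set $\mathcal{K}$; hypothesis (Uniform compact) is precisely what keeps the forward orbit $g_t\gamma([0,1])$ inside $\mathcal{K}$, so that this finiteness and the resulting non-degeneracy of $d_{per}$ are available. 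Once this bookkeeping is settled, no dynamical input beyond the explicit formula for $g_r$ on periods is needed, and the chain of inequalities above closes the argument.
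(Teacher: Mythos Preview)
Your approach is genuinely different from the paper's and, if the Lipschitz bound $d_{per}(g_r x, g_r y) \le e^r d_{per}(x,y)$ held, would be dramatically shorter. The paper instead adapts Smillie's argument: it restricts to a subarc in a fixed stratum, uses Proposition~\ref{prop:unif sc bound} to locate vertical saddle connections along $g_t\phi$, and then iteratively flows forward to shrink them while controlling the growth of the previously-found ones via Lemma~\ref{lem:per growth geo}. The hypothesis $c>1$ enters only through the exponent $\epsilon_0^{(c-1)/c}$ in the inductive step, which is what allows the old short segments to stay short while a new one is produced; eventually one finds too many short, homologically independent segments on a single surface in $\mathcal{K}$, contradicting the injectivity radius bound.

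The gap in your argument is the Lipschitz bound itself. You are right that $g_r$ has operator norm $e^r$ on relative cohomology with respect to any \emph{fixed} integral basis. But Frankel's norm at $g_r q$ uses the basis coming from a triangulation of $g_r q$ by saddle connections of bounded length, which is \emph{not} the $g_r$--image of the triangulation at $q$ (the latter has saddle connections of length up to $e^r L$). The transition between the two bases is essentially the Kontsevich--Zorich cocycle, and the paper's own Lemma~\ref{lem:stable} bounds the entries of this change-of-basis matrix by $C_{\mathcal K}e^t$ --- an estimate that is sharp in general. Composing, one gets only $\|Dg_r\|_{q\to g_r q}\le C e^{2r}$, which yields $c\le 2$ rather than $c\le 1$. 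Your proposed fix, that $g_r$ ``carries triangulation charts to triangulation charts,'' is literally true but does not help: the image chart has unboundedly long edges, so it is not one of the finitely many charts defining $d_{per}$ over $\mathcal K$ (this finiteness is exactly what the paper invokes when it says ``there are only finitely many norms in the definition of $d_{per}$ on $\mathcal K$''), and hence cannot be used to upper-bound the $d_{per}$--length of the image path.
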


The proof uses the following result, whose proof we defer to a later subset: 

\begin{prop}\label{prop:unif sc bound} For any compact subset of moduli space $\mathcal{K} \subset \mathcal{Q}$ and stratum $\mathcal{Q}(\vec{a})$
%{ \color{red} I think notation of what $\mathcal{K}$ is is nonuniform at the moment; sometimes its in moduli space, sometimes upstairs in a stratum?} 
 % whose projection to the moduli space of Riemann surfaces is compact
  and $u, r, c,\delta>0$ there exists an $L_{r,c,\mathcal{K},\vec{a}}:=L$ so that if $\phi:[0,1] \to \mathcal{Q}(\vec{a})$ is a path so that 
\begin{enumerate}
\item[i)] $g_t\phi(s)$ projects to $\mathcal{K}$ for all $0\leq t\leq L$ and $0\leq s\leq 1$
\item[ii)] $d_{per}(\phi(0),\phi(1))\geq r$.
\item[iii)] \label{cond:grow} $d_{per}(g_t\phi(a),g_t\phi(b))>%\delta 
\min\{e^{ct}d_{per}(\phi(a),\phi(b)),u\} $
\end{enumerate}
{ then there exists $t \in [0,1]$ so that $\phi(t)$} has a vertical saddle connection with length in $[\frac 1 L,L]$. %{\color{red}What has a vertical saddle connection? Every point on the path?}
\end{prop}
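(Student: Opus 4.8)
The plan is to adapt Smillie's argument for Masur's cylinder theorem: under the hypotheses (i)--(iii) one locates an (almost) vertical saddle connection of controlled length on some surface of the path. The first step is to record the bounded--geometry input coming from $\mathcal{K}$ and the period charts. By Mumford compactness there is a constant $\epsilon_0 = \epsilon_0(\mathcal{K}) > 0$ so that every saddle connection of every flat surface projecting into $\mathcal{K}$ has length at least $\epsilon_0$. By Frankel's estimate \cite[Appendix A]{Frankel} (built on the period coordinates of \cite{MS}) there is a constant $L_0 = L_0(\mathcal{K}, \vec a)$ so that the part of $\mathcal{K}$ lying over $\mathcal{Q}(\vec a)$ is covered by finitely many \emph{convex} period charts in each of which the triangulating saddle connections have length at most $L_0$ and $d_{per}$ is bi--Lipschitz to the Euclidean norm measuring the displacement of the triangulating holonomy vectors.

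Next I would exploit (i). Fixing $s$ and pulling a period chart around $g_L\phi(s)$ back along the flow shows that $\phi(s)$ carries a triangulation whose saddle connections $\sigma$ all have horizontal holonomy $|x_s(\sigma)| \le L_0 e^{-L}$ (from the length bound at time $L$) and, by the systole bound $\epsilon_0$, vertical holonomy bounded below; thus every surface on the path is triangulated by ``almost vertical'' saddle connections. The heart of the argument is then to upgrade ``almost vertical'' to ``vertical'' using the non--triviality (ii) and the expansion (iii): one tracks the horizontal holonomy $s \mapsto x_s(\sigma)$ of such a saddle connection, which is continuous where $\sigma$ persists and has absolute value $\le L_0 e^{-L}$. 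If it ever changes sign along the path we are done --- at the crossing $\phi(s)$ has an exactly vertical saddle connection, whose length lies in $[1/L, L]$ once $L$ is large (this is where one checks, via (i) and the flow, that the relevant saddle connection is not too long). If no sign change occurs, then the motion of the period coordinates of $\phi$ is confined: the horizontal coordinates stay in an $O(L_0 e^{-L})$--window and the triangulation changes only in a controlled way, so $d_{per}(g_t\phi(0), g_t\phi(1))$ can grow at most $\lesssim e^t$ relative to the $d_{per}$--size of $\phi$ (which is $O(L_0)$ by convexity of the charts). Comparing this with the lower bound $\min\{e^{ct} r, u\}$ from (iii), evaluated at a flow time of size $\asymp \log L$, produces a contradiction as soon as $L$ is large in terms of $r, c, u, \epsilon_0, L_0$, hence in terms of $r, c, \mathcal{K}, \vec a$. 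A tidier packaging argues by contradiction over a sequence $L_n \to \infty$: rescaling each offending path to $d_{per}$--diameter of order $u$ and extracting a limit --- possible because the finitely many charts provide $d_{per}$--bounded pieces --- one gets a non--degenerate path all of whose surfaces have compact two--sided $g_t$--orbit, and adapting Smillie's argument to this limit path yields the vertical saddle connection.

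I expect the main obstacle to be precisely this Smillie--style core: keeping track of how the triangulating saddle connections --- equivalently the period charts --- change both as $s$ runs along the path and as $t$ runs along the flow, so that (a) an almost vertical saddle connection genuinely persists (or is replaced controllably) long enough for the intermediate--value step to detect an exactly vertical one, and (b) these replacements are few and cheap enough that the $e^t$--stretching estimate for $d_{per}$ survives. Making all of this \emph{uniform} over the non--compact set $\mathcal{K}$, whose preimage meets infinitely many fundamental domains and over which the dimension of the ambient period chart is not constant, is the real content of Proposition~\ref{prop:unif sc bound}; the constants $\epsilon_0, L_0$ from the first step are exactly what make the uniformity available, and they also feed the application to Theorem~\ref{thm:main}, where the vertical saddle connection so produced contradicts the uniform compactness hypothesis there.
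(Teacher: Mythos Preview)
Your outline is genuinely different from the paper's argument, and it has a real gap.

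The paper does \emph{not} detect the vertical saddle connection by watching the horizontal holonomy of a triangulating saddle connection change sign. Instead it proves two auxiliary results. The first (Lemma~\ref{lem:horiz change}) shows, via a central--stable manifold analysis, that the expansion hypothesis~(iii) forces, somewhere along the path, the \emph{ratio} of horizontal holonomies of some fixed pair of saddle connections $\gamma,\zeta$ to change by a definite amount between two parameters $c,d$. The second (Proposition~\ref{prop:intersection right}) is a \emph{uniform unique ergodicity} statement coming from Masur's criterion: for surfaces whose forward $g_t$--orbit stays in $\mathcal{K}$, any initial segment of a vertical separatrix of length $L$ has its intersection ratio $i(v,\gamma)/i(v,\zeta)$ close to the horizontal holonomy ratio. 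Putting these together, a length--$L$ vertical separatrix segment with a fixed number of crossings of $\gamma$ must have \emph{different} numbers of crossings of $\zeta$ at the parameters $c$ and $d$; the intermediate value step is on this integer intersection number, and at the moment it jumps the separatrix hits a singularity, producing a vertical saddle connection of length at most $L$.

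Your sketch is missing exactly this unique--ergodicity input, and without it both branches of your dichotomy break down. In the ``sign change'' branch, the almost--vertical saddle connections you obtain by pulling back a Frankel triangulation from time $L$ have vertical holonomy of order up to $L_0 e^{L}$ on $\phi(s)$, so even if one of them became exactly vertical it would not have length in $[1/L,L]$. In the ``no sign change'' branch, the triangulations at $\phi(0)$ and $\phi(1)$ come from \emph{different} charts around $g_L\phi(0)$ and $g_L\phi(1)$, so there is no common basis in which to compare horizontal coordinates; and even granting a comparison, the claim that $d_{per}(g_t\phi(0),g_t\phi(1))$ grows ``at most $\lesssim e^t$'' does not contradict~(iii), whose lower bound is only $\min\{e^{ct}r,u\}\le u$. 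What actually forces the issue is that if the two surfaces were on the same central--stable leaf then (by Lemma~\ref{lem:stable} and Corollary~\ref{cor:constant D}) their $d_{per}$--distance would stay \emph{bounded} under $g_t$, contradicting~(iii); this is the content of Lemma~\ref{lem:horiz change}, and it requires the stable--manifold bookkeeping rather than a sign argument. Your ``tidier packaging'' via a limiting path does not sidestep this: Smillie's argument for a single surface finds a cylinder, not a vertical saddle connection, and adapting it to a path is precisely what the paper is doing---with Proposition~\ref{prop:unif sc bound} as the inductive step in the proof of Theorem~\ref{thm:main}, not as a consequence of a limit.
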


It also uses the following consequence of the definition of the metric and that it is defined by only finitely many norms on any compact subset of moduli space: 

 \begin{lem} \label{lem:per growth geo}There exists $u>0$ so that the following is true. Suppose that $x, y \in \mathcal{Q}(\vec{a})$ are % a stratum with $d$ singularities 
   connected by a path $\mathfrak{p} \subset \mathcal{Q}(\vec{a})$ and
   every point on $\mathfrak{p}$ %is contained in the set of surfaces of so that every point
   has injectivity radius at least $2\epsilon$. Further assume that $\gamma$ is a geodesic on $x$ and 
 $$\int_{\gamma}|dz_x|<\epsilon$$  for all $z \in \mathfrak{p}$.  Then we have the length of $\gamma$ on $y$ is at most $u$ %{\color{red}I suggest avoiding $d$ since the metric is called $d$} 
 times the distance from $x$ to $y$ in period coordinates plus the length of $\gamma$ on $x$.  
 \end{lem}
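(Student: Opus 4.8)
The plan is to unwind the definition of $d_{per}$ as a path metric built from finitely many period charts on the relevant compact set. First I would fix the compact set $\mathcal{K}$ to be (the preimage of) a compact neighbourhood in moduli space of the set of surfaces with injectivity radius at least $2\epsilon$, and invoke Frankel's finiteness statement \cite[Appendix A]{Frankel}: there is a bound $L_0$ and only finitely many period charts $\varphi_1,\dots,\varphi_m$ (with convex domains) covering $\mathcal{K}$ so that in each chart every saddle connection of the triangulation has length at most $L_0$. On each such chart, the period coordinates express the holonomy of \emph{every} saddle connection of the associated triangulation as a fixed $\mathbb{Z}$-linear combination of the coordinate functions; hence the holonomy of any saddle connection, and more generally of any fixed relative homology class written in the triangulation basis, is a Lipschitz function of the period coordinates, with Lipschitz constant depending only on the (finitely many) charts. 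Let $u$ be the maximum of these Lipschitz constants over the finitely many charts (and over the finitely many "change of basis'' maps on chart overlaps); this is the $u$ the lemma asks for.

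Next I would track the geodesic $\gamma$ along the path $\mathfrak{p}$. The hypothesis $\int_\gamma |dz_x| < \epsilon$ together with the injectivity-radius lower bound $2\epsilon$ on all of $\mathfrak{p}$ means that, as we move along $\mathfrak{p}$, the curve $\gamma$ stays short — short enough that it cannot change its topological type or wind around (it stays strictly shorter than any closed geodesic), so it is represented by a \emph{fixed} relative homology class (a fixed integral combination of the saddle connections of whichever triangulation is in force locally) all along $\mathfrak{p}$. Covering $\mathfrak{p}$ by finitely many of the charts $\varphi_i$ and using the triangle inequality, the length of $\gamma$ — being the modulus of a fixed $\mathbb{Z}$-linear functional of the period coordinates in each chart — changes by at most $u$ times the change in period coordinates along each piece. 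Summing over the pieces and taking the infimum over all paths from $x$ to $y$ in $\mathcal{Q}(\vec a)$ gives
\[ \ell_y(\gamma) \le \ell_x(\gamma) + u\, d_{per}(x,y), \]
which is exactly the claimed bound.

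The main obstacle I anticipate is the bookkeeping needed to make "$\gamma$ is a fixed homology class along $\mathfrak{p}$'' precise: the triangulation used to define period coordinates changes from chart to chart, so one must check that on chart overlaps the class representing $\gamma$ transforms by the (integral, finitely many) transition maps, and that the short-geodesic condition really does prevent $\gamma$ from being confused with a different, also-short class. This is where the injectivity radius hypothesis $2\epsilon$ versus the length bound $\epsilon$ on $\gamma$ does the work: any two distinct geodesics of length $<\epsilon$ would be disjoint simple closed curves, and the hypothesis that \emph{every} point of $\mathfrak{p}$ has no closed geodesic shorter than $2\epsilon$ forces $\gamma$ to remain the unique short geodesic in its free homotopy class, hence a well-defined class throughout. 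Once this is in hand, everything else is the Lipschitz estimate coming from finitely many linear charts, which is routine.
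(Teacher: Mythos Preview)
Your proposal is correct and is essentially the same approach as the paper's: the paper does not give a detailed proof but simply presents the lemma as ``a consequence of the definition of the metric and that it is defined by only finitely many norms on any compact subset of moduli space,'' which is precisely the Lipschitz-in-finitely-many-linear-charts argument you have spelled out. Your additional care about why $\gamma$ stays in a fixed relative homology class along $\mathfrak{p}$ (using the hypothesis that $\gamma$ has length $<\epsilon$ while the injectivity radius is $\geq 2\epsilon$ everywhere on the path) is the right way to fill in the only step the paper leaves implicit.
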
 
Note, we remove this multiplicative constant, $u$, to slightly lighten the notation in following proof.

\subsection{Proof of Theorem \ref{thm:main}} %{\sc make the argument more formal. Lift path to marked setting}
We choose a subsegment of $[a,b]\subset [0,1]$ so that $\phi([a,b])\subset \mathcal{Q}(\vec{a})$ for $\vec{a}$ fixed, which inherits the uniform compactness and uniform expansion properties. Becuase we are in a single stratum, we can choose a continuous one parameter family of homeomorphism $G_{t}:\phi(a) \to \phi(t)$ so that $G_a=Id$ and $G_t$ sends singularities to singularities for $t \in [a,b]$ and use these to identify singularities and separatrices. For example, we can locally do this on a patch with a fixed triangulation by the unique map that affinely sends triangles to triangles (see e.g. \cite[Section~2.4]{BSW}).
%}{\sc Bainbridge-Smillie-Weiss Section 2.4. The corresponding result in Masur-Smillie is I Lemma 1.1 which I don't think has a proof, if you don't know the proof already.}) \marginpar{Please add to references}
%cite masur-smillie or Bainbridge-smillie-weiss}) { \color{red} Make reference more precise }

Thus we can (and will) identify one-chains and separatrices on different $\phi(s)$ for $s \in [a,b]$. 
Because for the preimage of each compact set of moduli space there is a uniform upper bound on the number of saddle connections of length less than 1, it suffices to prove the following: 
 %\begin{prop}
 %Under the above assumptions, for any $n \in \mathbb{N}$ and $\epsilon>0$ there exists $t\geq 0$ and $0\leq s\leq 1$ so that $g_t\phi(s)$ has $n$ singularities within distance of another singularity.%$n$ disjoint saddle connections of length at most $\epsilon$.
% \end{prop}

 We proceed iteratively, producing sets $\{\xi_1,...\xi_{k+1}\}$ of relative homotopy classes and surfaces $g_{t_1}\phi(s_1),...g_{t_{k+1}}\phi(s_{k+1})$ so that 
 \begin{enumerate}
 \item $\xi_j$ is realized as a saddle connection on $g_{t_j}\phi(s_j)$ (and so on $\phi(s_j))$.
 
 Note we do not assume that it is realizable as a saddle connection on $\phi(s_i)$ for $i \neq j$. 
 \item Letting $\gamma_{i,j}$ denote the geodesic representative of $\xi_j$ on $g_{t_i}\phi(s_i)$ then the length of $\gamma_{i,j}<\epsilon$ for all $j\leq i$. 
 \item $\sum_{j=1}^{k}\int_{\gamma_{k+1,j}}dz_{g_{t_{k+1}}\phi(s_{k+1})}<hol_{g_{t_{k+1}}\phi(s_{k+1})}(\xi_{k+1})$
 \item the injectivity radius of every point on every surface in $\{g_t\phi(s):t\in [0,\infty), s \in [0,1]\}$ is at least $2\epsilon$.%$g_a\phi(b)$ {\color{red}This is one surface - do you mean $g_t\phi([a,b])$, or at the chosen surfaces?}.
 \end{enumerate}
 
 Observe that if there is a $p_1,...p_r$ are singularities and there is a chain $\xi_{i_1},...\xi_{i_{r-1}}$ so that $\xi_{i_j}$ connects $p_j$ to $p_{j+1}$ and $\xi_{k+1}$ connect $p_r$ and $p_1$ then we obtain a contradiction. Indeed, we consider the path $\cup \gamma_{k+1,i_j} \cup \xi_{k+1}$ from $p_1$ to $p_1$ and observe $\int_{\cup \gamma_{k+1,i_j} \cup \xi_{k+1}}dz_{g_{t_i}\phi(s_i)}\neq 0$ so it is not homologically (and so not homotopically) trivial. Also the sum of the absolute values of the length is less than $2\epsilon$. This contradicts {(4)}.

 \begin{proof}[Completion of proof of Theorem \ref{thm:main}] %Let $d=|\vec{a}|$ be the number of singularities in $\mathcal{Q}(\vec{a})$. 
 We choose $L$ in Proposition \ref{prop:unif sc bound} for some $r$. 
 The proof proceeds from induction as outlined above.   For the base case by Proposition \ref{prop:unif sc bound} there exists $a\leq s\leq b$ so that $\phi(s)$ has a vertical saddle connection, say of length $\ell$.  Thus $g_t\phi(s)$ has a vertical saddle connection of length $e^{-t}\ell$. If $t=\log(\frac{\ell}{\epsilon})$, assumption $(1)-(3)$ are obviously satisfied (since $k=0$). If $\epsilon$ is small enough, by our compactness assumption, inductive assumption (4) is satisfied. We now assume that the inductive assumption holds for $n$ geodesics (and all $\epsilon>0$)  
 and we want to produce an additional short saddle connection and a given $\epsilon>0$. 
 We apply the inductive hypothesis for obtain $g_t\phi(s)$ with $n$  geodesics 
 of length $\epsilon_0$, where $\epsilon_0$ will be determined later. Let $\gamma'$ be the connected component of the $\epsilon_0$ neighborhood of $g_t\phi(s)$. We now consider 
 $$g_{\log(r)-\frac 1 c \log(\epsilon_0)}\gamma':=\gamma''$$ and observe that by our uniform expansion assumption, its length in period coordinates is at least $r$. Now by Lemma \ref{lem:per growth geo} on every surface on $\gamma'$ our $n$ geodesics have length at most 
 $$2\epsilon_0.$$ Thus by estimating the expansion of $g_{\log(r)-\frac 1 c \log(\epsilon_0)}$ trivially, on every surface on $\gamma''$ their length is at most 
 $$r 2\epsilon_0^{\frac{c-1}c}.$$ By Proposition \ref{prop:unif sc bound} there is a surface $z$ on $\gamma''$ with a vertical saddle connection, $\xi_{n+1}$, of  length at least $\frac 1 L$ and at most $L$. 
We now consider 
 $$y:=g_{\log(\frac{L}{\epsilon})}z=g_{t+\log(r)-\frac 1 c \log(\epsilon_0)+\log(\frac{L}{\epsilon})}\phi(\sigma)$$ for some $\sigma \in [0,1]$ and observe that on $z$, $\xi_{n+1}$  has length at most  $\epsilon$. Our $n$ geodesics have length at most $\frac {L} {\epsilon} r2\epsilon_0^{\frac{c-1}c}$ on $y$. 
 We now state our assumption on $\epsilon_0$. We assume that 
 $$n  \frac L {\epsilon} r2\epsilon_0^{\frac{c-1}c}<\frac 1 2 \frac{\epsilon}{L}\min\{u, 1\}.$$ Since our saddle connection had length at most $\frac 1 L$, we have assumption (3) of the inductive assumptions we outlined before the proof. Because our saddle connection $\xi_{n+1}$ had length at most $L$ on $z$ it has length at most $\epsilon$ on $y$. So we satisfy assumption (1) and (2). If $\epsilon$ is small enough, (4) is satisfied by our uniform compactness assumption. 
 \end{proof}

 \subsection{Proof of Proposition \ref{prop:unif sc bound}}
 
 The proof uses two results the first is a uniform unique ergodicity statement that follows from Masur's criterion and our compactness assumptions:
 
 \begin{prop}\label{prop:intersection right} Let $q \in \mathcal{Q}$ so that $g_tq\in \mathcal{K}$ for all $t\geq 0$. For all $\epsilon>0$ there exists $L_\epsilon:=L$ so that for each  initial segment of a vertical separatrix of length $L$, $v$ and all pairs of saddle connections $\gamma,\zeta$ with transverse measure at least $\epsilon$ and length at most $\frac 1 \epsilon$ we have $$\bigg|\frac{i(v,\gamma)}{i(v,\zeta)}-\frac{\int_{\gamma}dx_q}{\int_{\zeta}dx_q}\bigg|<\epsilon^2.$$
\end{prop}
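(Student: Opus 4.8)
The plan is to deduce this uniform unique ergodicity statement from Masur's criterion, using the fact that $g_t q$ stays in the compact set $\mathcal{K}$ for all $t \geq 0$. Recall Masur's criterion: if the forward $g_t$-orbit of $q$ is recurrent to a compact set (in particular, if it stays in $\mathcal{K}$), then the vertical foliation of $q$ is uniquely ergodic, and the horizontal holonomy $\int_{(\cdot)} dx_q$ represents (a multiple of) the unique transverse measure. Thus, for a fixed such $q$, the desired conclusion is a pointwise statement: as the length $L$ of the initial segment $v$ of a vertical separatrix grows, the counting measures $\frac{1}{L}\sum_{p \in v} \delta_p$ (thought of via intersection numbers with saddle connections) equidistribute toward the transverse measure class, so $\frac{i(v,\gamma)}{i(v,\zeta)} \to \frac{\int_\gamma dx_q}{\int_\zeta dx_q}$. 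This is the classical consequence of unique ergodicity of the vertical foliation, together with the observation that saddle connections $\gamma, \zeta$ with transverse measure bounded below by $\epsilon$ and Euclidean length bounded above by $1/\epsilon$ form a ``nice'' family of transversals whose $i(v, \cdot)$ counts the number of intersections of the separatrix segment with them.

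The main work is to make this \emph{uniform} over all $q$ with $g_t q \in \mathcal{K}$ for $t\geq 0$ (and over all admissible $\gamma, \zeta$). First I would argue by contradiction: suppose there is an $\epsilon > 0$ and sequences $q_n$ (with $g_t q_n \in \mathcal{K}$ for all $t\geq 0$), vertical separatrix segments $v_n$ of length $L_n \to \infty$, and saddle connections $\gamma_n, \zeta_n$ with transverse measure $\geq \epsilon$ and length $\leq 1/\epsilon$, so that $\left| \frac{i(v_n, \gamma_n)}{i(v_n, \zeta_n)} - \frac{\int_{\gamma_n} dx_{q_n}}{\int_{\zeta_n} dx_{q_n}} \right| \geq \epsilon^2$. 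By compactness of $\mathcal{K}$, pass to a subsequence so that $q_n \to q_\infty$ in $\mathcal{Q}$; since $\mathcal{K}$ is $g_t$-forward-invariant and closed, $g_t q_\infty \in \mathcal{K}$ for all $t\geq 0$, so Masur's criterion applies to $q_\infty$: its vertical foliation is uniquely ergodic. One also passes to a subsequence so that $\gamma_n, \zeta_n$ converge to saddle connections $\gamma_\infty, \zeta_\infty$ on $q_\infty$ (the length and transverse-measure bounds, together with the bound on short saddle connections on compact sets from Frankel, keep the combinatorial type in a finite set, so after a subsequence the saddle connections are ``the same'' under the identifications near $q_\infty$), still with transverse measure $\geq \epsilon$, length $\leq 1/\epsilon$.

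The remaining step is a semicontinuity/approximation argument: the quantity $\frac{i(v_n, \gamma_n)}{i(v_n, \zeta_n)}$ is, up to an $O(1/L_n)$ error, governed by the long-time equidistribution of the vertical flow on $q_n$, and one needs that this is uniformly close to the corresponding quantity on $q_\infty$ once $q_n$ is close to $q_\infty$ and $L_n$ is large. Concretely, unique ergodicity on $q_\infty$ gives, for any $\epsilon' > 0$, a length $L_0$ so that \emph{every} vertical separatrix segment of length $\geq L_0$ on $q_\infty$ already has $\frac{i(v,\gamma_\infty)}{i(v,\zeta_\infty)}$ within $\epsilon'$ of $\frac{\int_{\gamma_\infty} dx_{q_\infty}}{\int_{\zeta_\infty} dx_{q_\infty}}$ (uniqueness of the transverse measure upgrades pointwise convergence of time averages to uniform convergence, exactly as in the standard proof that uniquely ergodic flows equidistribute uniformly); and intersection numbers of a fixed-length separatrix segment with a fixed saddle connection vary continuously (indeed are locally constant, under the marking identifications) as one perturbs the surface within a small neighborhood of $q_\infty$ in period coordinates. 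Combining these for $n$ large yields $\left| \frac{i(v_n,\gamma_n)}{i(v_n,\zeta_n)} - \frac{\int_{\gamma_n} dx_{q_n}}{\int_{\zeta_n} dx_{q_n}}\right| < \epsilon^2$, the desired contradiction.

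The hard part will be the uniformity in the last paragraph: turning the \emph{qualitative} unique ergodicity of $q_\infty$ into a uniform rate valid for all nearby $q_n$ and all long separatrix segments simultaneously. The clean way to package this is: (a) uniform convergence of Birkhoff averages for the uniquely ergodic vertical flow on the \emph{fixed} surface $q_\infty$, and (b) a perturbation lemma saying that short-to-medium saddle connections (length $\leq 1/\epsilon$, transverse measure $\geq \epsilon$) and the intersection-counting functions they define depend continuously on the surface in period coordinates on the compact set $\mathcal{K}$, using Frankel's finiteness of charts with bounded saddle-connection lengths. Both ingredients are essentially standard, but assembling them with the contradiction/compactness scheme is where the care lies; I would isolate (b) as a lemma before running the main argument.
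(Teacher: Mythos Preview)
Your overall architecture matches the paper: argue by contradiction, pass to a limit surface $q_\infty$ using compactness of $\mathcal{K}$, and invoke Masur's criterion to get unique ergodicity of the vertical foliation on $q_\infty$. The difference, and the gap, is in how you finish.

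Your final step says: unique ergodicity on $q_\infty$ gives an $L_0$ so that every vertical segment of length $\geq L_0$ on $q_\infty$ has the correct ratio, and ``intersection numbers of a fixed-length separatrix segment \ldots\ vary continuously (indeed are locally constant)'' as the surface varies, so for $n$ large the bad segment $v_n$ on $q_n$ must also have the correct ratio. But $v_n$ has length $L_n \to \infty$, not a fixed length. Local constancy of intersection counts for a length-$L_0$ segment only controls the \emph{initial} length-$L_0$ piece of the separatrix on $q_n$; beyond that, the separatrices on $q_n$ and $q_\infty$ diverge (at a rate you do not control relative to how fast $q_n \to q_\infty$), so nothing you wrote bounds $i(v_n,\gamma_n)/i(v_n,\zeta_n)$. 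Combining (a) and (b) as you describe does not yield the contradiction.

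The paper closes this gap differently: instead of trying to transfer equidistribution from $q_\infty$ to $q_n$, it takes the limit of the separatrix segments $v_n$ themselves. Because the $v_n$ may pass arbitrarily close to singularities of the limiting surface, the limit is not a single separatrix but a concatenation of vertical saddle connections (meeting at angle $\pi$ at cone points) followed by a terminal half-infinite separatrix on $q_\infty$; one must also allow the limiting $\gamma_\infty, \zeta_\infty$ to be finite unions of saddle connections, since $q_\infty$ may sit in a deeper stratum. Unique ergodicity of $q_\infty$ then forces the half-infinite tail to have the correct intersection ratio with $\gamma_\infty, \zeta_\infty$, contradicting the persistence of the bad ratio along the sequence. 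If you want to salvage your route, you would need to upgrade (b) to the statement that \emph{every} vertical segment of length $L_0$ on $q_n$ (not just the initial one) has the correct ratio once $n$ is large, and then chop $v_n$ into $\sim L_n/L_0$ such pieces; this is plausible but is a genuine compactness argument over basepoints that you have not supplied.
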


\begin{proof}
Assume not. For a fixed $\epsilon>0$ we obtain a convergent sequence of quadratic differentials $M_i$, saddle connections on $M_i$, $\gamma_i,\gamma_i'$ with transverse measure at least $\epsilon$ and legth at most $\frac 1 \epsilon$ and  separatrices $w_i$ on $M_i$ so that the initial segment of $w_i$ of length $L_i$, $v_i$ has 
\begin{itemize}
\item $|\frac{i(v_i,\gamma_i)}{i(v_i,\gamma'_i)}-\frac{\int_{\gamma_i} dx}{\int_{\gamma_i'}dx}|>\epsilon^2$
\item $L_i \to \infty$
\end{itemize}% from ``the same'' prong from ``the same" singularity so that the initial segment of $w_i$ of length $L_i$, $v_i$ has  failing this for s.c. $\gamma_i,\gamma_i'$ with transverse measure at least $\epsilon$ and initial segment of length $L_i$ going to infinity. 
Passing to the limit, we obtain 
\begin{itemize}
\item a limiting $M \in \mathcal{Q}$, with $g_tM \in \mathcal{K}$ for all $t\geq 0$ and so is uniquely ergodic
\item the limit of these separatrices converges to a union of separatrices $S$ that meet at cone points and have pairwise angle $\pi$ so that the last separatrix is half-infinite (otherwise there is a closed curve in the union which contradicts unique ergodicity)
\item  and a pair of finite unions of saddle connections $\gamma,\gamma'$ %in a triangulation of the limiting triangulation 
 with transverse measure at least $\epsilon$ and length at most $\frac 1 \epsilon$. 
\end{itemize}
so that the intersections of the terminal separatrix in $S$ with
$\gamma$ is not appropriate.  To see this, since we cover
$\mathcal{K}\subset \mathcal{Q}$ with only finitely many coordinate
patches, we may assume the $M_i$ are all in a fixed coordinate patch
and since $\mathcal{K}$ is compact we may assume they converge to a
limit $M$ (not necessarily in this coordinate patch). The seperatrices
$\gamma_i$ converge (up to a further subsequence) to a seperatrix of
$M$. To see this, realise e.g. the $M_i$ as actual $1$--forms on a
fixed surface (as opposed to isotopy classes), and observe that the
seperatrices are paths where these $1$--forms evaluates as real
numbers. Compare \cite[Section 2]{LS} for details on realising by
actual diffeomorphisms. 

%Since there is a
%fixed marking map on this patch, we may assume that the separatrices
%are all the same and $\gamma_i,\gamma_i'$ are all the same elements of
%relative cohomology. (This uses that there are only a bounded number
%of s.c. with length at most $\frac 1 \epsilon$.) We may then pass
%these to the limit say via an optimal Lipschitz map {\sc is this
% true}. This preserves verticality of the limit (check this).
In this limit differential, the limit seperatrix now has the wrong intersections -- which contradicts unique ergodicity.  
\end{proof}

The second is about how our metric interacts with the geodesic flow:

\begin{lem}\label{lem:horiz change}Under the assumptions of Proposition \ref{prop:unif sc bound} for every $\delta>0$ there exists $\epsilon>0$ so that if 
$d_{per}(\phi(a),\phi(b))>\delta$ then there are $c,d$ between $a$ and $b$ so that the segment between $c$ and $d$ is sent by $\phi$ to the same coordinate patch and there is a pair of saddle connections on $\phi(c)$, $\phi(d)$, $\gamma,\zeta$ so that $\phi(a)$ and $\phi(b)$ both $\gamma$ and $\zeta$ have length between $\epsilon$ and $\frac 1 \epsilon$ and 
$$\bigg|\frac{\int_\gamma dx_{\phi(c)}}{\int_{\zeta}dx_{\phi(c)}}-\frac{\int_\gamma dx_{\phi(d)}}{\int_\zeta dx_{\phi(d)}}\bigg|>\epsilon.$$
\end{lem}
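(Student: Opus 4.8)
\textbf{Proof proposal for Lemma \ref{lem:horiz change}.}

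The plan is to argue by contradiction and compactness, in the same spirit as the proof of Proposition \ref{prop:intersection right}. Suppose the conclusion fails for some $\delta>0$. Then there is a sequence $\epsilon_k\to 0$ and paths $\phi_k$ (satisfying the hypotheses of Proposition \ref{prop:unif sc bound}, in particular with $g_t\phi_k(s)$ projecting to the fixed compact set $\mathcal{K}$ for $0\le t\le L$) together with parameters $a_k,b_k$ with $d_{per}(\phi_k(a_k),\phi_k(b_k))>\delta$, so that for \emph{no} choice of $c,d\in[a_k,b_k]$ lying in a common coordinate patch, and \emph{no} pair of saddle connections $\gamma,\zeta$ having length between $\epsilon_k$ and $1/\epsilon_k$ on both $\phi_k(a_k)$ and $\phi_k(b_k)$, does the inequality on ratios of horizontal periods hold.

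First I would reduce to a single coordinate patch. Since $\mathcal{K}$ is covered by finitely many period-coordinate charts (by Frankel's finiteness statement, \cite[Appendix A]{Frankel}), and the path $\phi_k([a_k,b_k])$ moves a definite $d_{per}$-distance $>\delta$, it passes through at least $\lceil \delta/(\text{chart diameter})\rceil$ charts; by pigeonhole (after restricting to a subsegment) we may assume the whole subsegment lies in one chart, and that this chart is the same for all $k$ (finitely many charts, up to passing to a subsequence). Within this chart, relative cohomology is a fixed finite-dimensional normed space, and the two endpoints are at distance $>\delta$ in that norm. Passing to a further subsequence, the endpoints $\phi_k(a_k),\phi_k(b_k)$ converge to distinct points $A\ne B$ in the closure of the chart. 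Here is where I would invoke the contrapositive of unique ergodicity: the limiting differentials stay in $\mathcal{K}$ under $g_t$, $t\ge 0$, hence (Masur's criterion) $A$ is uniquely ergodic vertically; but $A$ and $B$ have different horizontal period data, and since they are homologous as relative cycles on a common surface, the only way their horizontal periods can differ while the vertical period structure agrees would contradict the vertical foliation of $A$ being uniquely ergodic — concretely, the difference $A-B$ is a nonzero real relative cohomology class pairing trivially against the vertical data, which forces a nontrivial horizontal cycle transverse to the vertical foliation and thus a second transverse invariant measure. This produces the saddle connections $\gamma,\zeta$ realizing distinct period ratios on $A$ versus $B$, with lengths bounded away from $0$ and $\infty$ (using that both surfaces lie in the fixed compact set $\mathcal{K}$, which gives uniform upper and lower length bounds on the relevant saddle connections). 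Pushing these back to $\phi_k(a_k),\phi_k(b_k)$ for large $k$ (by continuity of periods in the chart) contradicts the standing assumption.

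The main obstacle is the middle step: extracting, from the mere fact that $d_{per}(\phi(a),\phi(b))>\delta$ \emph{together with} the compactness-under-$g_t$ hypothesis, an \emph{honest pair} of saddle connections $\gamma,\zeta$ with controlled lengths whose horizontal period ratio genuinely changes. One must be careful that the difference of the two endpoints in relative cohomology, while nonzero, could a priori be supported entirely on absolute homology in a way that does not change any single saddle connection ratio; ruling this out is exactly where unique ergodicity of the vertical foliation (from Masur's criterion applied to the uniformly compact forward $g_t$-orbit) does the work, since a uniquely ergodic vertical foliation forces the horizontal periods of the fixed relative cycles to be determined, up to scale, by the vertical ones — so any discrepancy must show up in a ratio of two such periods. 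Choosing $\gamma,\zeta$ among saddle connections of the triangulation defining the chart, whose lengths are automatically in $[\epsilon,1/\epsilon]$ for a uniform $\epsilon$ depending only on $\mathcal{K}$, $\vec a$ and $L$ (since we only ever see finitely many such triangulations), then closes the argument. Having this, the existence of intermediate parameters $c,d$ in a common patch with the same ratio discrepancy is immediate: take $c=a$, $d=b$ after the reduction to a single patch.
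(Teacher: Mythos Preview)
Your argument has a genuine gap: you never use hypothesis (iii) of Proposition~\ref{prop:unif sc bound}, the growth condition
\[
d_{per}(g_t\phi(a),g_t\phi(b)) > \min\{e^{ct}d_{per}(\phi(a),\phi(b)),u\},
\]
and without it the conclusion is simply false. Two surfaces on the same stable leaf (identical horizontal periods, different vertical periods) can satisfy $d_{per}(A,B)>\delta$ while every horizontal period ratio agrees exactly; nothing in your compactness-and-limit argument excludes this. Your attempt to exclude it via unique ergodicity is doubly problematic: first, Masur's criterion is unavailable, since hypothesis~(i) only gives $g_t\phi(s)\in\mathcal{K}$ for $0\le t\le L$, a bounded time interval; second, even granting unique ergodicity of the vertical foliation on $A$, that is a statement about transverse measures on the \emph{single} surface $A$ and says nothing comparing the horizontal period data of $A$ to that of a different surface $B$. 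The sentence ``a uniquely ergodic vertical foliation forces the horizontal periods of the fixed relative cycles to be determined, up to scale, by the vertical ones'' conflates two unrelated things.

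The paper's proof is quite different and is organised precisely around the growth hypothesis. One first shows (Lemma~\ref{lem:stable} and Corollary~\ref{cor:constant D}) that points on the same local central-stable leaf have $d_{per}(g_t x,g_t y)\le D\,d_{per}(x,y)$ for all $t>0$. Then one picks $c,d$ in a single patch with $\|\Phi^{-1}\phi(c)-\Phi^{-1}\phi(d)\|$ a fixed small amount; hypothesis~(iii) forces $d_{per}(g_T\phi(c),g_T\phi(d))$ to be large at a specific time $T$, while the central-stable bound would force it to stay small. The contradiction shows $\phi(d)$ is a definite distance off the central-stable leaf through $\phi(c)$, which by the definition of the central-stable leaf is exactly the statement that some ratio of horizontal periods changes by a definite amount. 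No limiting argument or unique ergodicity is used.
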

We again defer the proof of the above lemma to the next subsection,
and first explain how it implies the main result of this section.
\begin{proof}[Proof of Proposition \ref{prop:unif sc bound}]
Let $c,d,\gamma,\zeta$ be as in Lemma \ref{lem:horiz change}. Using $\phi$, we have a family of (piecewise) affine homeomorphism on the surfaces on the segment between $\phi(c)$ and $\phi(d)$ which we can use to identify the separatrices on $\phi(c)$ and $\phi(d)$ (and every surface on the segment in between). 
By Proposition \ref{prop:intersection right} and our assumption on $\phi$ there exists $L$ so that for any vertical separatrix that has length $L$ on both $\phi(c)$ and $\phi(d)$, have different ratios of intersections with $\gamma$ and $\zeta$. We now choose $k$ so that for any $s$ between $c$ and $d$, for any vertical segment of length $L$ that intersects $\gamma$ at least $k$ times must have length at least $L$. We now consider a segment of a vertical separatrix on $\phi(c)$, $\sigma_c$ that intersects $\gamma$ exactly $k$ times. If this is not a vertical saddle connection, using our homeomorphisms we can consider a corresponding segments of $\sigma_s$ on $\phi(s)$ for $s$ near $c$. This separatrix becomes a (vertical) saddle connection when an intersection is added or removed from $\gamma$. This has to occur between $\phi(c)$ and $\phi(d)$. Indeed, letting $\sigma_c(k)$ and $\sigma_d(k)$ be the segments of $\phi(c)$ and $\phi(d)$ that intersect $\gamma$ exactly $k$ times we have $i(\sigma_c(k),\zeta)\neq i(\sigma_d(k),\zeta)$. % because the number of times the segment of this separatrix on $\phi(c)$ intersects $\zeta$ is different from the number of times the segment of that separatrix on $\phi(d)$ that intersect $\gamma$ exactly $k$ times intersects the $\zeta$. % Now starting from $\phi(c)$ we change our vertical separatrix to stop at the last intersection with $\gamma$. As we move this from $\phi(c)$ an intersection with one of $\gamma$ or $\zeta$ needs to be added or subtracted. The moment the first change happens gives a (union of) vertical saddle connections (from the initial point of the separatrix to on of the end points of $\gamma$ or $\zeta$).  %As we move such a segment of the separatrix along $\phi(c)$ to $\phi(d)$ it has to either gain or lose an
\end{proof}

\subsubsection{Proof of Lemma \ref{lem:horiz change}}

We consider $d_{per}$ restricted to $\mathcal{Q}(\vec{a})$ and its properties under $g_t$. Fix $\mathcal{K}$ a compact subset of $\mathcal{Q}$. 
Then we claim that there exists $\tilde{\epsilon}>0$ so that if $x,y \in \mathcal{Q}(\vec{a})$ and can be connected by a path in $Q(\vec{a})$ of length at most $\tilde{\epsilon}$ then there is a path in $\mathcal{Q}(\vec{a})$ connecting $x,y$ with length at most $(1+\delta)d_{per}(x,y)$ for any $\delta>0$.
This follows since, although $\mathcal{Q}(\vec{a})$ is not necessarily isometrically embedded in $\mathcal{Q}$, there are only finitely many convex, polyhedral, Masur-Smillie patches, (with maximal length of any saddle connection at most $L$) that intersect $\mathcal{K}$.%, while  {\sc is this clear?}, 
 %{\color{red} rephrased, check if still ok!}

We say $x,y \in \mathcal{Q}(\vec{a})$ are in the same stable manifold if  there is a path in $\mathcal{Q}(\vec{a})$ so that the horizontal components of the holonomies of one-chains on each of point in the path is the same. If $\mathcal{P}$ is a Masur-Smillie patch, we say $x,y \in \mathcal{P}$ are in the same local stable manifold if  there is a path in $\mathcal{P}$ so that the horizontal components of the holonomies of one-chains on each of point in the path is the same.%We say they are in the same central manifold if there is a path in $\mathcal{Q}(\vec{a})$ so that the ratio of the holonomies of each pair one-chains on each of point in the path is the same. We say they are in the same central stable manifold if they can be connected by a path where the length

\begin{lem}\label{lem:stable} There exists $\epsilon>0,D$ depending only on the compact part of moduli space, $\mathcal{K}$ so that if $x,y \in \mathcal{Q}(\vec{a})$ are in the same stable manifold and there is a path connecting them in the stable manifold of length $d_{per}(x,y)<\epsilon$ for all $t>0$ and $g_tx,g_ty \in \mathcal{K}$  then $d_{per}(g_tx,g_ty)<D\epsilon$ for all $t>0$. 
\end{lem}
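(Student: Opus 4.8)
The plan is to reduce the statement to a linear-algebra computation in period coordinates, exploiting that only finitely many Masur--Smillie charts meet $\mathcal{K}$. First I would fix, once and for all, a finite collection of convex period-coordinate patches $\mathcal{P}_1,\dots,\mathcal{P}_m$ (with all triangulation saddle connections of length at most $L$) covering a $d_{per}$-neighbourhood of $\mathcal{K}$ inside $\mathcal{Q}(\vec a)$ --- this is exactly the finiteness recalled from \cite{Frankel} above --- together with a Lebesgue-type radius $\rho_0>0$ so that the $d_{per}$-ball of radius $\rho_0$ about any point of $\mathcal{K}$ lies in one of the $\mathcal{P}_j$. On each $\mathcal{P}_j$ the period chart $\Phi_j$ identifies the patch with a convex subset of (a quotient of) relative cohomology with $\mathbb{C}$-coefficients, and I record three uniform facts, each with a constant depending only on the finitely many patches: (i) $d_{per}$ restricted to $\mathcal{P}_j$ is bi-Lipschitz, with constant $C_\ast$, to the chosen lattice norm $\|\cdot\|_j$; (ii) $g_t$ acts on $\Phi_j$ as the linear map scaling the real (``horizontal'') parts of period vectors by $e^t$ and the imaginary (``vertical'') parts by $e^{-t}$; (iii) the transition maps $\Phi_j\circ\Phi_i^{-1}$ on overlaps are affine with integer linear part, of operator norm at most $C_{\ast\ast}$.

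The second step isolates the contraction in the stable direction. The hypothesis provides a path $\mathfrak{p}$ from $x$ to $y$ inside the stable manifold of $d_{per}$-length less than $\epsilon$; choosing $\epsilon<\rho_0$ forces $\mathfrak{p}\subset\mathcal{P}_{j_0}$ for a single patch, and being in one stable manifold means by definition that the horizontal holonomy of every one-chain is constant along $\mathfrak{p}$, i.e. in the coordinates $\Phi_{j_0}$ the difference $w_0:=\Phi_{j_0}(y)-\Phi_{j_0}(x)$ is purely imaginary. Since changes of homology basis are \emph{real} integral matrices, being ``purely vertical'' is chart-independent; and by (ii) the map $g_t$ multiplies a purely vertical vector by exactly $e^{-t}$. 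Hence, as long as $g_sx$ and $g_sy$ remain in one common patch for $s\in[t,t']$, the difference of their period vectors is purely vertical there and
\[ \bigl\|\Phi(g_{t'}y)-\Phi(g_{t'}x)\bigr\| \;=\; e^{-(t'-t)}\,\bigl\|\Phi(g_{t}y)-\Phi(g_{t}x)\bigr\|. \]
Together with (i), and the fact that a stable-manifold path may be taken to essentially realise $d_{per}(x,y)$, this already controls the difference on any time interval on which no chart change occurs.

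Finally I would globalise: partition $[0,\infty)$ as $0=t_0<t_1<\cdots$ so that each arc $\{g_sx:s\in[t_k,t_{k+1}]\}$ lies in a single patch $\mathcal{P}_{j_k}$, with $t_{k+1}-t_k\ge\tau_0>0$ uniformly, the lower bound $\tau_0$ coming from the uniform lower bound on the injectivity radius on $\mathcal{K}$ (so triangulation flips along the orbit are definitely separated in time) together with the uniformly bounded $d_{per}$-speed of the $g_t$-flow on $\mathcal{K}$. An induction on $k$, using the displayed contraction on $[t_k,t_{k+1}]$, the distortion bounds $C_\ast,C_{\ast\ast}$ at the endpoints, and taking $\epsilon$ small enough that $g_sy$ is forced to accompany $g_sx$ inside $\mathcal{P}_{j_k}$, then yields $d_{per}(g_tx,g_ty)\le D\epsilon$ for all $t$. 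I expect the delicate point to be precisely the accumulation: a priori the factor $C_{\ast\ast}$ picked up at each of the potentially unboundedly many chart transitions could compound to $C_{\ast\ast}^{k}$ and overwhelm the $e^{-(t_k-t_0)}$ contraction. The way around this is that the difference vector lives in the \emph{stable} (vertical) subspace: the product of the transition matrices encountered along $\{g_sx\}_{s\in[0,T]}$ is, up to the monodromy of the integral homology local system, the single transition from the chart of $g_0x$ to that of $g_Tx$, and the relevant monodromy cocycle is forward-contracting on the vertical subspace, so the accumulated distortion is bounded in terms of $\mathcal{K}$ alone; equivalently, since each elementary flip contributes an absolutely bounded transvection while the orbit spends time at least $\tau_0$ in each chart, the per-flip expansion is dominated by the intervening contraction (refining the cover if necessary). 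Either way the net constant depends only on $\mathcal{K}$, giving the desired $D$.
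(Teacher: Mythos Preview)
Your reduction to period coordinates and the observation that a stable-manifold difference is a purely vertical vector scaled by $e^{-t}$ under $g_t$ is exactly right, and you correctly identify the crux: controlling the accumulated distortion from the chart changes. However, your proposed resolutions of this point do not close the gap.

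Option (c), balancing the per-flip transvection bound $C_{\ast\ast}$ against the contraction $e^{-\tau_0}$, requires $C_{\ast\ast}<e^{\tau_0}$, and you give no reason this holds; refining the cover only decreases $\tau_0$ while the elementary-flip norm bound does not improve, so this pushes the inequality the wrong way. Option (b) invokes that the homology cocycle is ``forward-contracting on the vertical subspace'': this is an Oseledets/Kontsevich--Zorich type statement which is both much deeper than what is needed and, in the form you use it, only asymptotic rather than uniform on $\mathcal{K}$. Option (a) is closest, since the product of the intermediate transition matrices really is the single change-of-basis matrix $A_T$ from the chart of $x$ to that of $g_Tx$; but you stop short of bounding $A_T$.

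The paper's argument supplies exactly this missing bound, and it is elementary geometry rather than cocycle theory. For $b$ a basis saddle connection of length at most $L$ in the source chart, $g_Tb$ has length at most $e^TL$ on $g_Tx$; writing $g_Tb$ in the target triangulation, the number of times any given target basis saddle can appear is at most $\mathrm{length}(g_Tb)/N_{\mathcal{K}}$, where $N_{\mathcal{K}}$ is the systole lower bound on $\mathcal{K}$. Hence every entry of $A_T$ is at most $C_{\mathcal{K}}e^T$. Combining this with the $e^{-T}$ contraction of the vertical vector in the pushed-forward norm gives $\|\cdot\|_e$-norm bounded by $C_{\mathcal{K}}\epsilon$, independent of $T$. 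This direct ``source-to-target'' estimate bypasses the induction over intermediate charts entirely; you should replace your third step with this argument.
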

\begin{proof}
  By Frankel we can cover the path by convex charts where the triangulation (but not necessarilly the norm) is constant.
  Because these patches are convex, and there are only finitely many norms in the definition of $d_{per}$ on $\mathcal{K} \subset \mathcal{Q}$ and so they are uniformly comparable. It suffices to prove the lemma for a straight line segment $\mathcal{\ell}$ in the stable manifold in a single convex patch $\mathcal{P}_s$, which is pushed forward by $g_t$ to a straightline segment in the stable manifold in a  pushed  single patch $\mathcal{P}_e$. 

We now compare two computations in $\mathbb{C}^k$ with two different norms: Let $\|\cdot \|_{s}$ be the norm given by the chart coming from Masur-Smillie on $\mathcal{P}_s$ (which is comparable by uniform multiplicative constants to the Frankel norms on this set) and $\|\cdot \|_e$ be the Masur-Smillie norm on $\mathcal{P}_e$. We need to compare the pushforward by $g_t$ of $\|\cdot \|_s$ with $\|\cdot \|_e$. This comparison is given by the integer matrix $A_t$ that changes $g_t\mathcal{B}_s$ to $\mathcal{B}_e$ 
where $\mathcal{B}_s$ and $\mathcal{B}_e$ are the bases of integer relative homology for $\mathcal{P}_s$ and $\mathcal{P}_e$ respectively. 
Because we are in a compact set in moduli space and there is an upper bound on the length of each element of $\mathcal{B}_s$ (maximized over the metrics given by points in $\mathcal{P}_s$) we have that there exists $C_{\mathcal{K}}$ so that every entry of $A_t$ is at most $C_{\mathcal{K}}e^t$. 
To show this, for $b \in \mathcal{B}_s$ we want to write $g_tb$ as a sum of elements of $\mathcal{B}_e$. We consider $g_tb \in g_tM \in \mathcal{P}_e$ where $M \in \mathcal{B}_s$. Using the triangulation of the surfaces in $\mathcal{P}_e$ we can write this (or any other) saddle connection as the sum of saddle connections in the triangulation. Because there is a uniform lower bound on the length of the shortest simple closed curve, there is a $N_{\mathcal{K}}$ so that the number of times a saddle connection in the triangulation can be used in this representation is at most $\frac{length(g_tb)}{N_{\mathcal{K}}}$. Now,  the coefficients in the representation of the saddle connections in the triangulation by relative homology classes is uniformly bounded from above. (By finiteness of patches.) The claim on the entries of $A_t$ follows because the length of the elements of $g_t\mathcal{B}$ on surfaces in $\mathcal{P}_e$ are at most $e^tL_{\mathcal{K}}$, where $L_{\mathcal{K}}$ is the longest length of any element in any basis over $\mathcal{K}$. 

The lemma is completed because the length of $g_t  \ell$ with respect to the $g_t$ pushforward of $\|\cdot\|_s$ is $e^{-t}$ times the length of $\ell$. 
%By the triangle inequality it suffices to prove this in two separate situations: When $x,y$ are in the same stable manifold and when they are in the same central manifold. 
\end{proof}

Because there is a uniform upper bound on the length of longest saddle connection in the bases for relative homology that define our norms we have the following:

%\begin{lem} There exists $\epsilon>0,D$ depending only on the compact part of moduli space, $\mathcal{K}$ so that if $x,y \in \mathcal{Q}(\vec{a})$ are in the same center manifold,  there is a path connecting them in the center manifold of length $d_{per}(x,y)<\epsilon$ and $0\leq t\leq 1$ then $d_{per}(g_tx,g_ty)<Dd_{per}(x,y)$.  %then for all $1>t>0$ and $g_tx,g_ty \in \mathcal{K}$  then $d_{per}(g_tx,g_ty)<D\epsilon$ for all $t>0$. 
%\end{lem}
\begin{lem}\label{lem:central stable}There exists $D$ so that if $p,q$ are in the same Masur-Smillie patch and for each pair of one chains $\gamma,\zeta$ we have 
$$\frac{\int_\gamma dx_{p}}{\int_{\zeta}dx_{p}}=\frac{\int_{\gamma} dx_{q}}{\int_{\zeta} dx_{q}} $$ then $d_{per}(g_tp,g_tq)<Dd_{per}(p,q)$.  
\end{lem}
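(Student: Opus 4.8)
The hypothesis says precisely that the horizontal holonomy data of $p$ and $q$ are positively proportional: there is a constant $c>0$ with $\int_{\gamma}dx_q=c\int_{\gamma}dx_p$ for every one-chain $\gamma$. Set $s=\log c$. Then $g_sp$ and $q$ have identical horizontal holonomies, so they lie on a common stable manifold in the sense of the definition preceding Lemma~\ref{lem:stable}, joined by the affine segment $\mathfrak{p}$ inside the given Masur--Smillie patch which fixes every horizontal period and interpolates the vertical ones linearly. The plan is to bound $d_{per}(g_tp,g_tq)$ along the concatenation of the flow segment $\{g_{t+r}p:0\le r\le s\}$, running from $g_tp$ to $g_{t+s}p=g_t(g_sp)$, followed by the $g_t$--image of $\mathfrak{p}$, running from $g_t(g_sp)$ to $g_tq$.

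First I would record two estimates that use only the uniform data on the Masur--Smillie patches meeting the relevant compact part $\mathcal{K}$ of moduli space (finitely many, convex, saddle connections of uniformly bounded length, hence uniformly bounded period diameter, and with $d_{per}$ uniformly comparable to the patch norm there). First, $|s|\le C_1\,d_{per}(p,q)$: indeed $|c-1|$ times the ``horizontal size'' of $p$ (the norm of the vector $(\int_{\gamma}dx_p)_{\gamma}$) equals the horizontal size of the period difference of $p$ and $q$, which is dominated by the full period difference, while the horizontal size of a unit--area differential lying in $\mathcal{K}$ is bounded below by a uniform positive constant (a flat surface of bounded diameter with vanishingly small horizontal periods would have vanishing area). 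Second, the $d_{per}$--length of $\mathfrak{p}$ is at most $C_2\,d_{per}(p,q)$: along $\mathfrak{p}$ only vertical periods vary, and the total vertical change from $g_sp$ to $q$ splits as $(e^{-s}-1)$ times the vertical holonomy of $p$ plus the vertical part of the period difference of $p$ and $q$; the first summand is $\le C|s|\le CC_1\,d_{per}(p,q)$ using the uniform upper bound on vertical size in $\mathcal{K}$ together with the first estimate, and the second summand is $\le C\,d_{per}(p,q)$.

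Finally I would bound each leg after applying $g_t$, uniformly in $t$. The flow leg has $d_{per}$--length $\le C_3|s|\le C_3C_1\,d_{per}(p,q)$, because the $d_{per}$--speed of $r\mapsto g_{t+r}p$ is controlled by the period norm of $g_{t+r}p$ in a local patch, which stays uniformly bounded along the forward $g_t$--orbit (exactly as in the proof of Lemma~\ref{lem:stable}: each forward iterate in $\mathcal{K}$ lies in one of finitely many patches with uniformly bounded saddle connections). For the stable leg I would subdivide $\mathfrak{p}$ into finitely many sub-segments of $d_{per}$--length less than the $\epsilon$ of Lemma~\ref{lem:stable}, apply that lemma to each consecutive pair of endpoints, and sum the resulting bounds; this gives $d_{per}(g_t(g_sp),g_tq)\le D\,(\mathrm{length}(\mathfrak{p})+\epsilon)\le D'\,d_{per}(p,q)$, and the triangle inequality then finishes the proof. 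I expect the main obstacle to be precisely this last step: one needs the intermediate points of the subdivision (and $g_sp$, $q$ themselves) to have forward $g_t$--orbit in $\mathcal{K}$, so that Lemma~\ref{lem:stable} is applicable. This is where the section's standing forward-boundedness assumption enters, together with the fact that $g_t$ acts with uniformly bounded distortion along a stable leaf, so that a point of $\mathfrak{p}$ lying between two forward-bounded surfaces remains in a fixed compact region of moduli space under all $g_t$; making that precise, and verifying that $d_{per}$ really is comparable to the patch norm on a convex patch meeting $\mathcal{K}$, is the technical core of the argument.
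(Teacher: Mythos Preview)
Your approach is correct in outline but takes a detour that creates the very obstacle you flag at the end. The paper's argument is much more direct and consists of a single observation, stated just before the lemma: there is a uniform upper bound $L$ on the lengths of the saddle connections forming the bases of the Masur--Smillie patches over $\mathcal{K}$.

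Here is how that observation gives the lemma without any flow/stable decomposition. In the $\mathcal{P}_s$--coordinates write $p=(x_j+iy_j)$ and $q=(cx_j+iz_j)$, so that
\[
g_tq-g_tp \;=\; (c-1)\,(e^tx_j) \;+\; i\,e^{-t}(z_j-y_j).
\]
The second summand is handled exactly as in the proof of Lemma~\ref{lem:stable}: the change-of-basis matrix to the patch $\mathcal{P}_e$ containing $g_tp$ has integer entries of size $\le C_{\mathcal K}e^t$, so its $\mathcal{P}_e$--norm is at most $C\|z-y\|_s\le C'\,d_{per}(p,q)$. For the first summand, note that $(e^tx_j)$ is precisely the horizontal period vector of $g_tp$; read in the $\mathcal{P}_e$--basis its entries are the horizontal components of the basis saddle connections on $g_tp$, each of absolute value at most $L$. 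Hence its $\mathcal{P}_e$--norm is at most $L$, and the first summand contributes at most $|c-1|\,L\le C_1L\,d_{per}(p,q)$ by your own estimate on $|c-1|$. That is the whole proof.

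In effect you already isolated the key point --- ``the period norm of $g_{t+r}p$ in a local patch stays uniformly bounded'' --- but used it only to control the flow leg. The paper applies that same bound directly to the horizontal part of $g_tq-g_tp$, so there is no need to pass through $g_sp$, no affine stable segment $\mathfrak{p}$, and hence no need to verify forward--boundedness of auxiliary intermediate points in order to invoke Lemma~\ref{lem:stable} as a black box. Your decomposition would work, but the simpler route avoids the technical core you correctly identified as the obstacle.
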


Given a  Masur-Smillie patch $\mathcal{P}$ we define the local central-stable manifold through $x \in \mathcal{P}$ to be 
$$\bigcup_{y \in \mathcal{S}(x)} \bigcup _{s_y<t<e_y} g_t y$$ where 
$s_y=\max\{t<0:g_ty \notin\mathcal{P}\}$ and $s_e=\min\{t>0:g_ty \notin\mathcal{P}\}$ and $\mathcal{S}(x)$ is the local stable manifold through $x$. 
%We need one more lemma:
%\begin{lem}\label{lem:growth bound}There exists $D$ so that $d_{per}(g_tx,g_ty)\leq De^{2t}d_{per}(x,y).$
%\end{lem}

\begin{cor}\label{cor:constant D}There exists $D$ so that for all $x$ and $y$ in the central-stable manifold through $x$ so that 
$$d_{per}(g_sx,g_sy)<D d_{per}(x,y)$$ for all $s>0$.
\end{cor}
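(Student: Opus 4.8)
The plan is to show that Corollary~\ref{cor:constant D} is an essentially formal consequence of Lemma~\ref{lem:central stable}: two points lying in a common local central--stable manifold automatically have proportional horizontal period data, so the contraction estimate of that lemma applies to them verbatim.

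Concretely, I would first unwind the definition of the local central--stable manifold through a point $x$ in a Masur--Smillie patch $\mathcal{P}$. If $y$ lies in it, then there is a point $w$ in the local stable manifold $\mathcal{S}(x)$ and a time $t_0$ with $s_w < t_0 < e_w$ so that $y = g_{t_0}w$; in particular $w$ and $y$ lie in $\mathcal{P}$, as does $x$. By the definition of the local stable manifold, the horizontal holonomy of every one--chain takes the same value on $x$ as on $w$, i.e. $x$ and $w$ have equal horizontal period vectors. Since $g_{t_0}$ multiplies every horizontal holonomy by the single scalar $e^{t_0}$, the horizontal period vector of $y$ equals $e^{t_0}$ times that of $x$. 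Hence for every pair of one--chains $\gamma, \zeta$ we get
\[ \frac{\int_{\gamma} dx_x}{\int_{\zeta} dx_x} = \frac{\int_{\gamma} dx_y}{\int_{\zeta} dx_y}. \]
Now $x$ and $y$ both lie in the single patch $\mathcal{P}$, so the hypotheses of Lemma~\ref{lem:central stable} hold with $p=x$, $q=y$, and that lemma produces a constant $D$ with $d_{per}(g_t x, g_t y) < D\, d_{per}(x,y)$; in particular this holds for all $s > 0$, which is the assertion. Taking the maximum of the finitely many constants over all patches meeting the relevant compact part $\mathcal{K}$ of moduli space (there are only finitely many by Frankel's setup) yields the single uniform $D$.

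In this route the only point that genuinely needs care, and the one I would double-check, is the identification ``local stable manifold $=$ locus of points sharing $x$'s horizontal period vector inside $\mathcal{P}$'' together with the fact that $g_{t_0}$ only distorts this by a global scalar (harmless for ratios); once that is in place the corollary is just Lemma~\ref{lem:central stable} read along the natural directions. If for some reason one did not want to apply Lemma~\ref{lem:central stable} directly with $q=y$ (for instance to stay strictly inside the convex chart of a single patch), the fallback is to decompose the motion from $x$ to $y$ through the stable ``foot point'' $w$: bound $d_{per}(g_s x, g_s w)$ by Lemma~\ref{lem:stable}, bound $d_{per}(g_s w, g_s y)$ by Lemma~\ref{lem:central stable} applied to the flow orbit $w, g_{t_0}w$ (whose horizontal periods are again proportional), and combine by the triangle inequality. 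In that version the real obstacle becomes proving $d_{per}(x,w) + d_{per}(w,y) \lesssim d_{per}(x,y)$, i.e. that routing through the corner $w$ is efficient; this is a transversality statement for the stable and flow directions inside a single convex chart, which one would establish using that the horizontal component of $x-w$ vanishes (so the horizontal components do not cancel in $x-y=(x-w)+(w-y)$), the unit--area normalisation, and the uniform bounds on saddle--connection lengths over $\mathcal{K}$.
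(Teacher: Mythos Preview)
Your main argument is correct and is exactly the intended deduction: the paper states the corollary immediately after Lemma~\ref{lem:central stable} and the definition of the local central--stable manifold precisely because any $y$ in that manifold has horizontal period vector a scalar multiple of that of $x$, so the ratio hypothesis of Lemma~\ref{lem:central stable} holds and the bound follows. Your fallback via the foot point $w$ is unnecessary here (and, as you note, would require an extra transversality argument), but the direct route you give first is both complete and the one the paper has in mind.
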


\begin{proof}[Proof of Lemma \ref{lem:horiz change}] 
Let $\|\cdot\|_s$ be the norm on the Masur-Smillie patch, $\mathcal{P}$ and $C$ be so that for all $x,y \in \mathcal{P}$ we have 
$$\frac 1 C \|\Phi^{-1}(x)-\Phi^{-1}(y)\|_s\leq  d_{per}(x,y)\leq C \|\Phi^{-1}(x)-\Phi^{-1}(y)\|_s$$ where $\Phi$ is the map from relative cohomology to the Masur-Smillie patch giving our coordinates. Note, $\Phi$ could finite to one in which case we choose the elements of $\Phi^{-1}(x)$ and $\Phi^{-1}(y)$ that are closest in the metric coming from $\|\cdot \|_s$. We will continue this convention throughout.  Let $D$ be as in Corollary \ref{cor:constant D}. %Let $\vec{v}$ the line segment connecting $\Phi^{-1}(x)$ and $\Phi^{-1}(y)$. 
Now if 
$$\|\Phi^{-1}(\phi(c))-\Phi^{-1}(\phi(d))\|_s=\frac{r}{4DC}$$ then $$d_{per}\bigg(g_{\frac 1 c \log(4DC)}\phi(c),g_{\frac 1 c \log(4DC)}\phi(d)\bigg)\geq r .$$ 
%Because $\mathcal{P}$ is convex, there is $z $ in the connected component of the central stable manifold through $x$ intersected with $\mathcal{P}$ that is minimal distance from $y$. 
Now if $x,y$ are in the same local central-stable manifold then $$d_{per}\bigg(g_{\frac 1 c \log(4DC)}\phi(c),g_{\frac 1 c \log(4DC)}\phi(d)\bigg)\leq \frac r{4}. $$
%(in fact, this is true for $g_{t}$ for any $t>0$).  
Because $g_{\frac 1 c \log(4DC)}$ is a fixed $d_{per}$-continuous map and $\mathcal{K}$ is compact, we have that $y$ is definite distance from the connected component of the central stable manifold through $x$ intersected with $\mathcal{P}$. By the definition of our metric we have the lemma.  
\end{proof}

  \subsection{Properness of assumptions}
 
 We now show that the growth condition on our path is proper. We construct a path $\phi:[0,1] \to \mathcal{Q}_m$, in a single stratum of quadratic differentials,\footnote{it will be in the stratum of abelian differentials $\mathcal{H}(2,2,1,1)$ and there is a fixed flat torus where every point on the path is a branched cover of this torus} so that 
 \begin{itemize}
  \item there exists a single compact set of moduli space $\mathcal{K}$ so that $g_t \phi(s) \in \mathcal{K}$ for all $t\geq 0$ and $s\in [0,1]$. 
 \item $d_{per}(g_t\phi(a),g_t\phi(b))\geq Ce^td_{per}(\phi(a),\phi(b))$ for all $a,b\in [0,1]$ and $t\geq 0$.
 \end{itemize}

 Consider $M$, 3-to-1 branched cover of the torus in the stratum of abelian differentials $\mathcal{H}(2)=\mathcal{Q}(4,+)$, so that Teichm\"uller geodesic on the surface (equivalently the torus) projects to a fixed compact part of moduli space. Glue two identical copies of this surface along a slit, so that the projections of the endpoints are $B$-badly approximable. Let $\phi$ move both endpoints horizontally to the left at unit speed. As in \cite[Section 3]{Paths}, the $B$-badly approximable property is preserved along $\phi$ and every surface is thick.
 
 Applying Teichm\"uller geodesic flow to $\phi$, one obtain $\phi_t$ which moves both singularities to the left at speed $e^t$. 
At each point one can choose bases of relative cohomology so that locally all but two elements are unaffected and two basis elements have their horizontal components increase with speed $e^t$ and the vertical components remain unchanged. Indeed, the unchanging basis vectors can be chosen from a basis of the original torus. Moreover, this can be done where each basis element is realized by saddle connections with length uniformly bounded from above. By the fact that any basis we choose in the definition has this property, the change seen from this choice is bilipschitz to $d_{per}$ and the claim follows. 

\bibliographystyle{alpha}
\bibliography{ref-markov}

\begin{thebibliography}{BSW22}

\bibitem[BH99]{BH}
Martin~R. Bridson and Andr{\'e} Haefliger.
\newblock {\em Metric spaces of non-positive curvature}, volume 319 of {\em
  Grundlehren Math. Wiss.}
\newblock Berlin: Springer, 1999.

\bibitem[BSW22]{BSW}
Matt Bainbridge, John Smillie, and Barak Weiss.
\newblock {\em Horocycle dynamics: new invariants and eigenform loci in the
  stratum {{\(\mathcal{H} (1,1)\)}}}, volume 1384 of {\em Mem. Am. Math. Soc.}
\newblock Providence, RI: American Mathematical Society (AMS), 2022.

\bibitem[CH24]{Paths}
Jon Chaika and Sebastian Hensel.
\newblock Path-connectivity of the set of uniquely ergodic and cobounded
  foliations.
\newblock {\em Geometriae Dedicata}, 218(6):109, 2024.

\bibitem[Cor17]{Cor}
Matthew Cordes.
\newblock Morse boundaries of proper geodesic metric spaces.
\newblock {\em Groups Geom. Dyn.}, 11(4):1281--1306, 2017.

\bibitem[CSZ]{ChCorSis}
Matthew Cordes, Alessandro Sisto, and Stefanie Zbinden.
\newblock Corrigendum to morse boundaries of proper geodesic metric spaces.

\bibitem[Fra19]{Frankel}
Ian Frankel.
\newblock A comparison of period coordinates and teichm\"uller distance, 2019.

\bibitem[Ham05]{Ham}
Ursula Hamenstaedt.
\newblock Word hyperbolic extensions of surface groups, 2005.

\bibitem[LS09]{LS}
Christopher~J. Leininger and Saul Schleimer.
\newblock Connectivity of the space of ending laminations.
\newblock {\em Duke Math. J.}, 150(3):533--575, 2009.

\bibitem[LS14]{LS-thick}
Christopher~J. Leininger and Saul Schleimer.
\newblock Hyperbolic spaces in {Teichm{\"u}ller} spaces.
\newblock {\em J. Eur. Math. Soc. (JEMS)}, 16(12):2669--2692, 2014.

\bibitem[Min96]{Minsky}
Yair~N. Minsky.
\newblock Quasi-projections in {Teichm{\"u}ller} space.
\newblock {\em J. Reine Angew. Math.}, 473:121--136, 1996.

\bibitem[Min05]{Min05}
Ashot Minasyan.
\newblock On residualizing homomorphisms preserving quasiconvexity.
\newblock {\em Commun. Algebra}, 33(7):2423--2463, 2005.

\bibitem[Mos03]{Mosher}
Lee Mosher.
\newblock Train track expansions of measured foliations.
\newblock Preprint, 2003.

\bibitem[MS91]{MS}
Howard Masur and John Smillie.
\newblock Hausdorff dimension of sets of nonergodic measured foliations.
\newblock {\em Ann. Math. (2)}, 134(3):455--543, 1991.

\bibitem[PH92]{PH}
R.~C. Penner and J.~L. Harer.
\newblock {\em Combinatorics of train tracks}, volume 125 of {\em Ann. Math.
  Stud.}
\newblock Princeton, NJ: Princeton University Press, 1992.

\bibitem[Raf14]{Rafi-hyperbolic}
Kasra Rafi.
\newblock Hyperbolicity in {Teichm{\"u}ller} space.
\newblock {\em Geom. Topol.}, 18(5):3025--3053, 2014.

\end{thebibliography}

\end{document}